\documentclass[reqno, 12pt]{article}

\pdfoutput=1

\usepackage{enumerate}
\usepackage{latexsym}
\usepackage[centertags]{amsmath}
\usepackage{amsfonts}
\usepackage{amssymb}
\usepackage{amsthm}
\usepackage{mathtools}
\usepackage{newlfont}
\usepackage{graphics}
\usepackage{color}
\usepackage{float}
\usepackage{diagbox}
\usepackage{tocloft}
\usepackage{titlesec}
\usepackage{booktabs,longtable,array}
\usepackage{extpfeil}
\usepackage{centernot}
\usepackage[pagebackref,colorlinks=true,linkcolor=blue,citecolor=red,urlcolor=blue]{hyperref}
\usepackage[linesnumbered,ruled,vlined]{algorithm2e}
\usepackage{url}
\usepackage[T1]{fontenc}
\usepackage{lmodern}
\usepackage{microtype}
\usepackage[nameinlink,noabbrev,capitalize]{cleveref}
\usepackage{rotating}
\usepackage{multirow}
\usepackage{extarrows}
\usepackage[sort,compress,numbers]{natbib}
\usepackage[utf8]{inputenc}
\usepackage{xcolor}
\usepackage{listings}
\usepackage{aliascnt}
\numberwithin{equation}{section}

\newtheorem{theorem}{Theorem}[section]
\newtheorem{proposition}[theorem]{Proposition}
\newtheorem{lemma}[theorem]{Lemma}
\newtheorem{corollary}[theorem]{Corollary}
\theoremstyle{definition}
\newtheorem{definition}[theorem]{Definition}
\newtheorem{remark}[theorem]{Remark}
\newtheorem{example}[theorem]{Example}

\newtheorem{problem}[theorem]{Problem}

\allowdisplaybreaks[4]

\SetKwInput{KwInput}{Input}                
\SetKwInput{KwOutput}{Output}              

\newcommand{\R}{\mathbb{R}}
\newcommand{\Z}{\mathbb{Z}}
\newcommand{\N}{\mathbb{Z}_{\geq 0}}

\newcommand{\conv}{\operatorname{conv}}

\newcommand{\codeg}{\operatorname{codeg}}
\newcommand{\relint}{\operatorname{relint}}

\newcommand{\A}{\mathsf{A}}

\title{Taylor Positivity of Ehrhart Polynomials}

\author{Feihu Liu$^{\color{blue} \dag}$ and Zihao Zhang$^{\color{blue} \S}$
\\[2mm]
{\small $^{\color{blue} \dag}$ Center for Combinatorics, LPMC,}\\[-0.8ex]
{\small Nankai University, Tianjin 300071, P.R.~China}\\
{\small $^{\color{blue} \S}$ School of Mathematics and Statistics,}\\[-0.8ex]
{\small Beijing Institute of Technology, Beijing 102400, P.R.~China}\\
{\small {\color{blue} $^\dag$} Email address: liufeihu7476@163.com}\\
{\small {\color{blue} $^\S$} Email address: zihao-zhang@foxmail.com}\\
}

\date{\today}

\begin{document}

\maketitle

\begin{abstract}
Let $P$ be a $d$-dimensional lattice polytope with Ehrhart polynomial $L_P(t)$. Motivated by the study of Ehrhart positivity and magic positivity, we investigate the Taylor coefficients $\mathsf{A}_j(P;k)$ in the shifted expansion $L_P(t)=\sum_{j=0}^{d}\mathsf{A}_j(P;k)(t-k)^j$ about a real center $k$. In this paper, we obtain the following four main results.

(i) We give exact formulas for these coefficients in terms of the ordinary Ehrhart coefficients, the $h^*$-vector, elementary symmetric functions, and Stirling numbers.

(ii) We denote by $\tau(P)$ and $\tau^+(P)$ the smallest nonnegative integral centers at which all Taylor coefficients are nonnegative and positive, respectively. If $s$ is the degree of the $h^*$-polynomial, then $0\leq\tau(P)\leq\tau^+(P)\leq\min\{\max\{0,s-1\},\lfloor\frac{d-1}{2}\rfloor\}$. As an application, we slightly improve an upper bound due to Beck, De Loera, Develin, Pfeifle, and Stanley. That is, every real root of $L_P(t)$ lies in $[-d,\lfloor\frac{d-1}{2}\rfloor)$.

(iii) Let $\rho(P)$ be the smallest nonnegative real center such that the Taylor coefficients are nonnegative. If $\lambda_{\mathbb{R}}(f)$ denotes the largest real zero of $f(t)$, with value $-\infty$ when no such zero exists, then
$\rho(P)=\max\{0,\max_{0\leq j<d}\lambda_{\mathbb{R}}\!(L_P^{(j)})\}$.

(iv) We establish structural properties of the Taylor coefficients $\A_j(P;k)$, including derivative interlacing, palindromic reflection symmetries, and Laguerre and Newton inequalities.

As a final note, these results provide a systematic partial answer to an open problem listed on the website of the American Institute of Mathematics.
\end{abstract}

\noindent
\begin{small}
\emph{2020 Mathematics subject classification}: Primary 52B20; Secondary 52B05, 05A15, 26C10.
\end{small}

\noindent
\begin{small}
\emph{Keywords}: Ehrhart polynomial; Ehrhart positive; Lattice polytope; $h^*$-polynomial; Taylor coefficient; Simplex; Interlacing property; Laguerre inequality.
\end{small}

\tableofcontents

\section{Introduction}

\subsection{Basic concepts}

A \emph{lattice polytope} is the convex hull of finitely many points of $\Z^d$.
Throughout this paper, we assume that $d\geq 1$.
Let $P\subset\R^d$ be a full-dimensional lattice polytope.  
For $t\in\N$, its \emph{lattice-point enumerator}
\[ L_P(t)=|tP\cap\Z^d|\]
counts the number of integer points in the $t$-th dilation $tP = \{ t\alpha : \alpha \in P \}$ of $P$.

Ehrhart~\cite{Ehrhart1962} proved that the function $L_P(t)$ is a polynomial in $t$ of degree $d$ with constant term $1$.
The resulting polynomial is called the \emph{Ehrhart polynomial} of $P$.
The \emph{Ehrhart series} of $P$ is $\operatorname{Ehr}_P(z)=\sum_{t\geq0}L_P(t)z^t$.
It has the rational form
\begin{equation}\label{eq:ehrhart-series}
\operatorname{Ehr}_P(z)=\frac{h_P^*(z)}{(1-z)^{d+1}},\qquad h_P^*(z)=\sum_{r=0}^{d}h_r^*z^r.
\end{equation}
The numerator $h_P^*(z)$ is the \emph{$h^*$-polynomial} of $P$, and its coefficient vector is the \emph{$h^*$-vector}. 
Stanley~\cite{Stanley1980} proved that the coefficients of $h^*$-polynomial are nonnegative integers.

Expanding $(1-z)^{-d-1}$ and comparing coefficients in \eqref{eq:ehrhart-series} gives the polynomial identity
\begin{equation}\label{eq:hstar-expansion}
  L_P(t)=\sum_{r=0}^{d}h_r^*\binom{t+d-r}{d}.
\end{equation}

For a lattice polytope $P$, let $\relint(P)$ denote its relative interior. 
The reciprocity theorem of Ehrhart and Macdonald is a classical result in polytope theory. 
For every positive integer $n$, we have 
\begin{align}\label{thm:reciprocity}
L_P(-n)=(-1)^d\bigl|\relint(nP)\cap\Z^d\bigr|.
\end{align} 
In particular, if $d$ is even, then $L_P(-1)\geq0$.
Its full general form is due to Macdonald \cite{Macdonald1971}; see also \cite[Theorem~4.1]{BeckRobins2015}.

The \emph{degree} and \emph{codegree} of $P$ are $\deg(P)=\deg h_P^*(z)=:s$ and $\codeg(P)=d+1-s$, respectively.
Equivalently, $s$ is the largest index for which $h_s^*\neq0$. 
In particular, $\codeg(P)$ is the smallest positive integer $k$ such that $\relint(kP)\cap\Z^d\neq \emptyset$; see \cite[Theorem 4.5]{BeckRobins2015}.

For further background on lattice polytopes, we refer to several excellent books~\cite{BeckRobins2015}, \cite[Chapter 4]{RP.Stanley}, and \cite{Ziegler}.

\subsection{Motivation}

In the ordinary monomial basis, one writes $L_P(t)=c_dt^d+\cdots+ c_1t+ c_0$.
It is well known \cite[Corollary 3.20; Theorem 5.6]{BeckRobins2015} that $c_d$ equals the volume of $P$, $c_{d-1}$ equals half of the boundary volume of $P$ (i.e., half the sum of the relative volumes of its facets), and the constant term is $c_0=1$.
Therefore, these three coefficients are always positive.

The remaining coefficients, however, are more intricate and lack a straightforward geometric interpretation \cite{McMullen77}.
We refer to the coefficients $c_i$ for $1\leq i\leq d-2$ as the \emph{middle Ehrhart coefficients} of $P$.
A lattice polytope $P$ is said to be \emph{Ehrhart positive} if $c_i>0$ for all $i$.
Reeve's tetrahedra provide the classical non-Ehrhart positive examples \cite{Reeve1957}.

The study of Ehrhart positivity has attracted considerable attention in recent years.
Numerous families of polytopes have been established as Ehrhart positive, including lattice path matroids \cite{Ferroni-Morales2026}
(which encompass the hypersimplices~\cite{Ferroni2021Hypersimplices}, minimal matroids~\cite{Ferroni2022Minimal}, Schubert matroids~\cite{Fan-Li}, Catalan matroids~\cite{Chen-Li-Yao}), rank-two matroids~\cite{FerroniJochemkoSchroeter2022}, cross-polytopes~\cite[Section~2]{Liu2019}, the $y$-families of generalized permutohedra~\cite{Postnikov2009} (which encompass the Pitman--Stanley polytopes~\cite{Stanley-Pitman}), and cyclic polytopes~\cite{LiuFuA-M}.

Conversely, non-Ehrhart positive examples have been found among order polytopes~\cite{LiuTsuchiya2019,LiuXinZhang2026}, matroid polytopes~\cite{Ferroni2022Matroids}, and smooth polytopes~\cite{CastilloEtAl2018}. More non-Ehrhart-positive polytopes are constructed in \cite{Hibi-Higashitani-Tsuchiya-Yoshida,LiuTaoXin,LiuTaoXinSign}
See \cite{Liu2019} for a survey of Ehrhart positivity.

A stronger property than Ehrhart positivity is \emph{magic positivity}, defined as follows.
\begin{definition}
A lattice polytope $P$ of dimension $d$ with Ehrhart polynomial $L_P(t)$ is said to be \emph{magic positive} if the polynomial can be expressed in the basis $\{t^i(t+1)^{d-i}\}_{i=0}^d$ with nonnegative coefficients. That is,
\begin{align*}
L_P(t) = \sum_{i=0}^d a_i t^i(t+1)^{d-i},
\end{align*}
where $a_i \ge 0$ for all $0 \le i \le d$.
\end{definition}

Clearly, the magic positivity of a lattice polytope implies the Ehrhart positivity of the polytope.
Another reason for studying the magic positivity of a lattice polytope $P$ is that, by the result of Br\"and\'en \cite{Branden06}, it implies that the $h^*$-polynomial of $P$ is real-rooted.
Some results on the magic positivity of polytopes are found in \cite{Avila26,Ferroni-Morales2026,LiuZhangMagic,Konoike,HillTrinh}.
Many polytopes that are Ehrhart positive but not magic positive can also be found in \cite{Avila26,FerroniHigashitani2024,LiuZhangMagic}.

Motivated by investigations on Ehrhart positive and magic positive, we naturally define the shifted Ehrhart coefficients as follows.

\begin{definition}\label{def:taylor-positive}
Let $P$ be a $d$-dimensional lattice polytope and let $k\in\R$.  The \emph{shifted Ehrhart coefficients at $k$} are the uniquely determined numbers $\A_j(P;k)$ satisfying
\begin{equation}\label{eq:shifted-expansion}
L_P(t)=\sum_{j=0}^{d}\A_j(P;k)(t-k)^j.
\end{equation}
We call the coefficients $\A_j(P;k)$ the \emph{Taylor coefficients} of the polytope $P$. We also refer to $k$ as the expansion \emph{center}. We say that $P$ is \emph{Taylor nonnegative at $k$} if $\A_j(P;k)\geq0$ for every $j$, and \emph{Taylor positive at $k$} if $\A_j(P;k)>0$ for every $j$.
\end{definition}

The terminology records that Taylor positivity at $0$ is precisely Ehrhart positivity.

Another motivation for introducing the relevant concepts in \cref{def:taylor-positive} is the following open problem listed on the website of the American Institute of Mathematics~\cite{AimPl-Website}.

\begin{problem}(\cite[Problem 1.6]{AimPl-Website})\label{Problem-Join-open}
Let $P$ be a lattice polytope. Fix $k\in \mathbb{Z}_{>0}$ and write $L_P(t)$ in the basis $\{ (t-k)^i\}_i$.
For example, if $\deg(h_P^*(z))=s$, then $L_P(t)$ is nonnegative in the basis  $\{ (t-s+1)^i\}_{i=0}^d$.
Investigate what happens when changing $k$ from 0 to 1 for a non-Ehrhart positive polytope (or other values of $k$).  
\end{problem}

\subsection{Main results}

In this paper, we give a systematic study of the Taylor coefficients $\A_j(P;k)$ of the polytope $P$.

The \textbf{first} contribution of this paper consists of exact formulas for $\A_j(P;k)$.
If $e_a$ denotes the elementary symmetric polynomial of degree $a$, then
\[\A_j(P;k)=\frac{L_P^{(j)}(k)}{j!}=\sum_{m=j}^{d}\binom{m}{j}c_mk^{m-j}
 =\frac1{d!}\sum_{r=0}^{d}h_r^*e_{d-j}(k-r+1,\ldots,k-r+d).
\]
Moreover, for $\delta\geq0$, we obtain
\[\A_j(P;k+\delta)=\sum_{\ell=j}^{d}\binom{\ell}{j}\delta^{\ell-j}\A_\ell(P;k).
\]
In particular, simultaneous nonnegativity at one center implies positivity at every larger center.

We now define the following two indices. 

\begin{definition}\label{def:positivity-indices}
The \emph{integer Taylor-nonnegativity index} and the \emph{integer Taylor-positivity index} are
\begin{align*}
 \tau(P)&=\min\{k\in\N:\A_j(P;k)\geq0\text{ for all }j\},\\
 \tau^+(P)&=\min\{k\in\N:\A_j(P;k)>0\text{ for all }j\}.
\end{align*}
\end{definition}
These quantities measure how far the expansion center must be shifted to the right from zero before coefficientwise nonnegativity or positivity is attained.
These minima exist.  Indeed, for $j<d$ the polynomial $k\mapsto\A_j(P;k)$ has degree $d-j$ and positive leading coefficient
$\binom{d}{j}c_d$, while $\A_d(P;k)=c_d>0$.  Thus every coefficient is positive for all sufficiently large $k$.

As a \textbf{second} contribution, we give the following upper bound.
\begin{theorem}\label{Intorodct-One}
Let $P$ be a $d$-dimensional lattice polytope, and let $s=\deg h_P^*(z)$.  Then
\[ 0\leq\tau(P)\leq\tau^+(P)\leq\min\left\{\max\{0,s-1\},\left\lfloor\frac{d-1}{2}\right\rfloor\right\}.
\]
\end{theorem}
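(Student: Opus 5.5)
The plan is to prove the two upper bounds $\tau^+(P)\le\max\{0,s-1\}$ and $\tau^+(P)\le\lfloor\frac{d-1}{2}\rfloor$ separately. The inequalities $0\le\tau(P)\le\tau^+(P)$ are immediate from \cref{def:positivity-indices}. Throughout I will use the formula stated above,
\[
\A_j(P;k)=\frac1{d!}\sum_{r=0}^{d}h_r^*\,e_{d-j}(k-r+1,\ldots,k-r+d),
\]
together with the monotonicity statement: nonnegativity at one center implies positivity at every larger center. Because of this monotonicity, for each bound it suffices to show Taylor positivity at the single center in question.

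\textbf{The bound $\max\{0,s-1\}$.} Put $k=\max\{0,s-1\}$. Only indices $r\le s$ contribute, since $h_r^*=0$ for $r>s$. For such $r$ we have $k-r+1\ge 0$, so every argument of $e_{d-j}$ is a nonnegative integer and each summand is nonnegative. The $r=0$ summand has $h_0^*=1$, and its arguments $k+1,\dots,k+d$ are all $\ge 1$. Hence $e_{d-j}(k+1,\ldots,k+d)>0$ for every $0\le j\le d$, and therefore $\A_j(P;k)>0$ for all $j$. This gives $\tau^+(P)\le\max\{0,s-1\}$.

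\textbf{The bound $\lfloor\frac{d-1}{2}\rfloor$.} Put $k=\lfloor\frac{d-1}{2}\rfloor$. By the first part we may assume $s-1>k$, so some nonzero $h_r^*$ has $r\ge k+2$. For such $r$ the block $k-r+1,\dots,k-r+d$ contains $0$ and negative entries, and its individual contribution can be negative. The idea is therefore not to bound term by term, but to regroup the $h^*$-expansion into pieces that are symmetric about $t=-\tfrac12$. For this I would use the Betke--McMullen/Stapledon decomposition
\[
(1+z+\cdots+z^{\ell-1})\,h_P^*(z)=a(z)+z^{\ell}b(z),
\]
where $\ell=\codeg(P)$, the polynomials $a,b$ are palindromic of degrees $d$ and $d-\ell$, and $a,b$ have nonnegative coefficients (with $a_0=1$). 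Dividing by $1+\cdots+z^{\ell-1}$ corresponds to an averaging of shifts of $L_P$, which should preserve positivity of Taylor coefficients at a fixed center.

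After this regrouping, $L_P$ becomes a nonnegative combination of polynomials of the form
\[
g_r(t)=\binom{t+d-r}{d}+\binom{t+r}{d}, \qquad 0\le r\le\tfrac d2,
\]
together with their analogues for $b$ (of lower degree, shifted by $\ell$). The key step is then a purely one-variable lemma: each such $g_r$ has nonnegative Taylor coefficients at $t=k$, and the one coming from $a_0=1$ has strictly positive ones. To prove it I would write the roots of both binomials as blocks of consecutive integers, namely $\{-d+r,\dots,r-1\}$ and $\{-r,\dots,d-1-r\}$. Since $r\le d/2$, every root lies in $(-\infty,k]$. I would then expand
\[
g_r(t+k)=\frac1{d!}\Bigl(\prod_{i}(t+k-r+i)+\prod_i(t+k+r-d+i)\Bigr)
\]
and compare coefficientwise. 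The elementary symmetric functions of the block $\{1,\dots,k+r\}\cup\{-(r-k-1),\dots\}$ would be paired against those of the mirrored block, so that negative products in one factor are dominated by the corresponding positive products in the other. An alternative route is to use $\A_j=L^{(j)}(k)/j!$ together with the symmetry about $-\tfrac12$: the derivatives of $g_r$ inherit root interlacing by Rolle's theorem, so their largest real roots stay $\le k$.

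\textbf{Main obstacle.} This pairing lemma is where I expect the difficulty. In particular, I am not certain that the averaging step (dividing by $1+\cdots+z^{\ell-1}$) transfers positivity cleanly. If it does not, I would try instead to apply Hibi's inequalities $h_0^*+\cdots+h_{i+1}^*\ge h_s^*+\cdots+h_{s-i}^*$ directly. These would let me charge each dangerous term $h_r^*$ with $r\ge k+2$ against the low-index terms $h_{s-r}^*$, whose contributions are large and positive at the center $k$.
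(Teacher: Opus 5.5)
Your proof of the bound $\max\{0,s-1\}$ is correct and is the paper's proof of \cref{thm:degree-bound}. The dimension bound has a genuine gap, in two places.

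\textbf{First, Stapledon's identity decomposes the wrong polynomial.} Multiplying $h_P^*$ by $1+z+\cdots+z^{\ell-1}$ replaces $L_P(t)$ by $S(t)=\sum_{i=0}^{\ell-1}L_P(t-i)$. This is a polynomial identity because $L_P(-1)=\cdots=L_P(1-\ell)=0$. So it is $S$, not $L_P$, that becomes a nonnegative combination of your pairs. The Taylor coefficients of $S$ at $k$ are $\sum_{i=0}^{\ell-1}\A_j(P;k-i)$. These involve centers below $k$, where coefficients can be negative, and positivity of the sum does not give positivity of the $i=0$ summand. Undoing the multiplication is not an averaging: with $D=\frac{\mathrm d}{\mathrm dt}$ one has $L_P=\frac{1-e^{-D}}{1-e^{-\ell D}}\,S$. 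Already for $\ell=2$ this operator is $\frac12+\frac D4-\frac{D^3}{48}+\cdots$; for instance $S(x)=x^3$ comes from $L(x)=\frac12x^3+\frac34x^2-\frac18$. So for $\ell\ge2$ your reduction runs in the wrong direction.

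\textbf{Second, the pairing lemma fails for the $b$-part, and its justification is wrong even for the $a$-part.} The $b$-pair coming from $z^{\ell+r}$ and $z^{d-r}$ is $\binom{t+d-\ell-r}{d}+\binom{t+r}{d}$. Both binomials vanish at $t=\ell+r-1$. Whenever $\ell+r-1>k$, the pair therefore has a real zero to the right of $k$, which is incompatible with nonnegative Taylor coefficients at $k$. This happens in the range you need. For $d=7$, $k=3$, $\ell=3$ (so $s-1=4>k$), the middle term $b_2z^5$ contributes $b_2\binom{t+2}{7}$, and $[x]\binom{x+5}{7}=-\frac1{42}$. The $b$-part would have to be compensated by the $a$-part, and no such argument is given.

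For the $a$-pairs, $\binom{t+r}{d}$ has zeros up to $d-1-r$, which exceeds $k$ for small $r$. The pairs also need not be real-rooted: $\binom{t+3}{3}+\binom t3=\frac16(2t^3+3t^2+13t+6)$ has only one real zero. So neither ``all roots $\le k$'' nor Rolle applies. The $a$-pair claim is nonetheless true. What proves it is exactly the ingredient the paper uses for the whole theorem: a termwise lower bound linear in $r$, namely $g_r(k,j)\ge\lambda_j(d+1-2r)$ for the quantities in \eqref{eq:g-definition}. This is \cref{lem:beck} for odd $d$ at $\lfloor d/2\rfloor$, and \cref{lem:even-comparison} (via \cref{lem:shifted-rising-comparison}) for $d=2m$ at $m-1$. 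Its instances $r$ and $d-r$ add to $2\lambda_j>0$.

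The paper then sums this bound against $h_r^*\ge0$ and uses $\sum_r h_r^*(d+1-2r)=2(d-1)!\,c_{d-1}>0$ from \eqref{eq:surface-hstar}. This gives $\A_j(P;k)>0$ directly, with no decomposition of $h_P^*$ and no averaging (\cref{prop:dimension-bound}, \cref{thm:even-dimensional-optimal-upper}). Your Hibi/Stanley fallback points toward this kind of charging. Without a quantitative termwise bound of this form, however, it is not yet an argument.
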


By constructing a class of simplices, we also prove the following result.
\begin{corollary}\label{Introduction-Cor-ODD-EVEN}
For every odd $d\geq3$, there is a $d$-dimensional lattice simplex for which $\tau(P)=\tau^+(P)=\frac{d-1}{2}$.
For every even $d\geq4$, there is a $d$-dimensional lattice simplex for which $\tau(P)=\tau^+(P)=\frac{d-2}{2}$.
\end{corollary}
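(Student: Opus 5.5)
We take $P$ to be a lattice simplex whose $h^*$-polynomial has exactly two nonzero terms, $h_P^*(z)=1+(N-1)z^{s}$ with $N$ large and $s\geq 2$. We then show that the Taylor coefficient $\A_1(P;s-2)$ is negative. Together with the shift formula and \cref{Intorodct-One}, this pins $\tau(P)=\tau^+(P)=s-1$.

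\textbf{Construction.} Take the simplex with vertices $0,e_1,\dots,e_{d-1},v$, where $v=(a_1,\dots,a_{d-1},N)$. The lattice points of its fundamental parallelepiped (box points) are indexed by $k=1,\dots,N-1$. The $k$-th box point has barycentric-type coordinates $\lambda_d=k/N$, $\lambda_i=\{-ka_i/N\}$ for $1\le i\le d-1$, and $\lambda_0$ determined by the last coordinate. Its height is $\sum_i\lambda_i$. We choose the $a_i\in\{1,N-1\}$, i.e.\ residues $\pm1$, so that the nonzero coordinates come in pairs $\{k/N,\,1-k/N\}$. Each such pair contributes exactly $1$ to the height.
\begin{itemize}
\item For odd $d$, all $d+1$ coordinates (including $\lambda_0$) pair up. Every box point then has height $s=(d+1)/2$.
\item For even $d$, we pair $d$ of the coordinates and force one coordinate to be $0$ (a pyramid direction). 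Every box point then has height $s=d/2$.
\end{itemize}
In either case $h_P^*(z)=1+(N-1)z^{s}$. Checking that these residue choices really give a lattice simplex with exactly these box points, all at constant height, is the routine but fiddly part. I expect it to be the main technical obstacle, and I would verify it carefully via the standard box-point description of simplices. Note that $s\ge2$ holds for $d\ge3$ odd and for $d\ge4$ even.

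\textbf{Upper bound.} By \cref{Intorodct-One},
\[\tau^+(P)\le\min\{s-1,\lfloor (d-1)/2\rfloor\}=s-1.\]
This holds because $(d+1)/2-1=(d-1)/2$ for odd $d$, and $d/2-1=(d-2)/2=\lfloor(d-1)/2\rfloor$ for even $d$.

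\textbf{Lower bound.} By the shift formula $\A_j(P;k+\delta)=\sum_{\ell\ge j}\binom{\ell}{j}\delta^{\ell-j}\A_\ell(P;k)$, Taylor nonnegativity at any integer $k\le s-2$ would imply nonnegativity at $s-2$. So it suffices to show that $P$ is not Taylor nonnegative at $s-2$.

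For this we use the $h^*$ formula:
\[\A_j(P;k)=\frac1{d!}\Big[e_{d-j}(k+1,\dots,k+d)+(N-1)\,e_{d-j}(k-s+1,\dots,k-s+d)\Big].\]
Take $k=s-2$. The second argument list becomes $-1,0,1,\dots,d-2$. Choose $j=1$. Since the list contains $0$, $e_{d-1}$ of it equals the product of the nonzero entries, which is $-(d-2)!$. Hence
\[\A_1(P;s-2)=\frac{1}{d!}\Big[e_{d-1}(s-1,\dots,s+d-2)-(N-1)(d-2)!\Big],\]
and this is negative once $N$ is large enough.

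\textbf{Conclusion.} The lower bound gives $\tau(P)\ge s-1$, and the upper bound gives $\tau(P)\le\tau^+(P)\le s-1$. Therefore $\tau(P)=\tau^+(P)=s-1$. This equals $(d-1)/2$ for odd $d$ and $(d-2)/2$ for even $d$, which proves \cref{Introduction-Cor-ODD-EVEN}.
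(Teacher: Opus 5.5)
Your proposal is correct and is essentially the paper's proof. Your simplices are the generalized Reeve simplices $\Delta_{s,N}$ for odd $d$ and their single lattice pyramids $\operatorname{pyr}(\Delta_{s,N})$ for even $d$; you get $\tau^+\le s-1$ from the $h^*$-degree bound, and $\tau\ge s-1$ from $\A_1(P;s-2)=\frac1{d!}e_{d-1}(s-1,\ldots,s+d-2)-\frac{N-1}{d(d-1)}<0$ for large $N$ together with positivity persistence, exactly as the paper does. The only differences are that the paper cites Liu--Tao--Xin for $h^*=1+(N-1)z^s$ instead of counting box points, and that your even case needs a small fix: ``$a_i\in\{1,N-1\}$'' should read either one $a_i=0$ (the pyramid), or $d/2$ entries equal to $1$ and $d/2-1$ entries equal to $N-1$, which forces $\lambda_0=0$ and height $d/2$.
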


For $d\geq1$, define
\begin{align*}
\mathfrak u_d&:=\min\left\{
\begin{aligned}
k\in\N:\quad&\A_j(P;k)\geq0\text{ for every }d\text{-dimensional lattice polytope }P\\[-2pt]
&\text{and every }0\leq j\leq d
\end{aligned}\right\},
\\
\mathfrak u_d^+&:=\min\left\{
\begin{aligned}
k\in\N:\quad&\A_j(P;k)>0\text{ for every }d\text{-dimensional lattice polytope }P\\[-2pt]
&\text{and every }0\leq j\leq d
\end{aligned}\right\}.
\end{align*}

This yield the following result.
\begin{theorem}\label{Intorodct-Two}
For every $d\geq1$, we have 
\[\mathfrak u_d=\mathfrak u_d^+=\left\lfloor\frac{d-1}{2}\right\rfloor.
\]
\end{theorem}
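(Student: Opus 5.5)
The argument has two halves: an upper bound coming from \cref{Intorodct-One}, and a lower bound coming from the simplices of \cref{Introduction-Cor-ODD-EVEN}. The even case of that corollary leaves a gap of one, and the main work is to close it.

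\textbf{Upper bound.} Put $m=\lfloor\frac{d-1}{2}\rfloor$. By \cref{Intorodct-One}, every $d$-dimensional lattice polytope satisfies $\tau^+(P)\leq m$. Combined with the shift formula
\[
\A_j(P;k+\delta)=\sum_{\ell\geq j}\binom{\ell}{j}\delta^{\ell-j}\A_\ell(P;k),
\]
this gives $\A_j(P;m)>0$ for all $j$ and all $P$. Hence $\mathfrak u_d\leq\mathfrak u_d^+\leq m$. The inequality $\mathfrak u_d\leq\mathfrak u_d^+$ also holds directly from the definitions, since positivity implies nonnegativity.

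\textbf{Lower bound, easy cases.} It remains to show $\mathfrak u_d\geq m$, which then forces $\mathfrak u_d=\mathfrak u_d^+=m$.
\begin{itemize}
\item For $d=1,2$ we have $m=0$, so there is nothing to prove.
\item For odd $d\geq3$, \cref{Introduction-Cor-ODD-EVEN} gives a simplex with $\tau(P)=\frac{d-1}{2}=m$. So some $\A_j(P;m-1)<0$, and therefore $\mathfrak u_d\geq m$.
\end{itemize}
Note that failure of nonnegativity at the single integer $m-1$ is enough here. By the shift formula, nonnegativity at a smaller integer $k$ would propagate to $m-1$, so any $k<m$ that worked for all $P$ would in particular work for this simplex at $m-1$.

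\textbf{Lower bound, even $d$: the main obstacle.} For even $d\geq4$ we have $m=\frac{d-2}{2}$. \cref{Introduction-Cor-ODD-EVEN} supplies a simplex with $\tau(P)=\frac{d-2}{2}=m$, which again gives $\A_j(P;m-1)<0$ for some $j$. So the even case follows the same way, provided the corollary is stated as written; the whole even case rests on the corollary giving $\tau(P)$ exactly equal to $\frac{d-2}{2}$.

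If one instead wanted a proof independent of the corollary, the natural route is a simplex whose $h^*$-polynomial is $1+(N-1)z^{(d+1)/2}$ for odd $d$ (a Reeve-type simplex of large volume $N$). Using
\[
\A_j(P;k)=\frac1{d!}\sum_r h_r^*e_{d-j}(k-r+1,\ldots,k-r+d),
\]
one shows that for $N\to\infty$ the sign of $\A_j(P;k)$ is governed by $e_{d-j}$ of the consecutive integers $k-\frac{d+1}{2}+1,\ldots,k+\frac{d-1}{2}$. For $j=d-1$ and $k=m-1$ these integers sum to a negative number, which gives the required negative coefficient.

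For even $d$ the analogous choice is $h^*=1+(N-1)z^{d/2}$. The hardest part of this route is checking that such a simplex exists, for instance an explicit simplex with vertices $0,e_1,\ldots,e_{d-1}$ and a suitable last vertex.
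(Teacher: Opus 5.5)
Your argument is correct and is essentially the paper's. The upper bound is \cref{cor:improved-combined-bound}, and the lower bound comes from the generalized Reeve simplices and their pyramids: the paper applies \cref{thm:pyramid-sharpness} directly with $s=\lceil d/2\rceil$ and $D=d$, which is exactly what establishes the corollary you cite, so there is no circularity.

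One slip in your optional aside: the coefficient that fails at $m-1$ is $\A_1$, since $e_{d-1}(-1,0,1,\ldots,d-2)=-(d-2)!<0$. It is not $\A_{d-1}$, whose relevant sum is $d(m-1)\geq 0$; indeed $\A_{d-1}$ is always positive at nonnegative integer centers by \cref{prop:automatic-coefficients}.
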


As an application, we slightly improve an upper bound for Beck, De Loera, Develin, Pfeifle, and Stanley \cite{BeckEtAl2005}.
\begin{corollary}\label{Intorodct-Three}
Let $P$ be a $d$-dimensional lattice polytope.  Then every real root $\alpha$ of $L_P(t)$ satisfies $\alpha<\lfloor\frac{d-1}{2}\rfloor$. Together with the lower bound in
\cite[Theorem~1.2(b)]{BeckEtAl2005}, this shows that every real Ehrhart root lies in
\[\left[-d,\left\lfloor\frac{d-1}{2}\right\rfloor\right).
\]
\end{corollary}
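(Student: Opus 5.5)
The plan is to read off the statement from \cref{Intorodct-Two}, specifically from the fact that $\mathfrak u_d^+=\lfloor\frac{d-1}{2}\rfloor$. Set $k_0=\lfloor\frac{d-1}{2}\rfloor$. By definition of $\mathfrak u_d^+$, every $d$-dimensional lattice polytope $P$ satisfies $\A_j(P;k_0)>0$ for all $0\le j\le d$. Equivalently, this also follows directly from \cref{Intorodct-One}: $\tau^+(P)\le k_0$, and positivity at $\tau^+(P)$ propagates to every larger center via the shift formula $\A_j(P;k+\delta)=\sum_{\ell\ge j}\binom{\ell}{j}\delta^{\ell-j}\A_\ell(P;k)$ with $\delta=k_0-\tau^+(P)\ge0$.

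Next, write $L_P(t)=\sum_{j=0}^d \A_j(P;k_0)(t-k_0)^j$. For any real $t\ge k_0$, each term is nonnegative, since $(t-k_0)^j\ge 0$ and $\A_j(P;k_0)>0$. The $j=0$ term equals $\A_0(P;k_0)=L_P(k_0)>0$. Hence $L_P(t)>0$ for all $t\ge k_0$. So no real root lies in $[k_0,\infty)$, i.e.\ every real root satisfies $\alpha<k_0$. The strict inequality rests on strict positivity of the constant coefficient, which is why the positive index $\tau^+$ is needed rather than $\tau$.

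For the lower bound I will simply cite \cite[Theorem~1.2(b)]{BeckEtAl2005}, which gives $\alpha\ge -d$ for every real root. Combining the two bounds gives $\alpha\in[-d,\lfloor\frac{d-1}{2}\rfloor)$. The only substantive ingredient is the uniform positivity at center $k_0$, which is supplied by \cref{Intorodct-One} (or \cref{Intorodct-Two}). Once that is available, there is no real obstacle; the one point to watch is using strict positivity so that the endpoint $k_0$ itself is excluded.
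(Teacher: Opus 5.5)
Your proposal is correct and is essentially the paper's proof: it takes Taylor positivity at $K=\lfloor\frac{d-1}{2}\rfloor$ from \cref{cor:improved-combined-bound}, concludes that $L_P(K+x)>0$ for all $x\geq0$, and cites \cite[Theorem~1.2(b)]{BeckEtAl2005} for the lower bound $-d$. One small correction to your closing remark: at the nonnegative integer center $K$ the constant coefficient $\A_0(P;K)=L_P(K)=|KP\cap\Z^d|$ is automatically positive, so Taylor nonnegativity (the index $\tau$) would already give the strict bound, and $\tau^+$ is not actually needed (cf.\ \cref{cor:root-free}).
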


To define the corresponding least real center, set
\[\mathcal N(P)=\{k\in\R_{\geq0}:\A_j(P;k)\geq0\text{ for all }j\}.
\]
This set is nonempty, closed by continuity, and upward closed by \cref{cor:positivity-persistence}.  Hence there is a unique $\rho(P)\geq0$ such that
\begin{equation}\label{eq:real-positivity-ray}
\mathcal N(P)=[\rho(P),\infty).
\end{equation}
We refer to $\rho(P)$ as the \emph{real Taylor-nonnegativity index}.

As our \textbf{third} main contribution, the real zeros of the coefficient functions determine $\rho(P)$.  
If $\lambda_{\R}(f)$ is the largest real zero of $f(t)$, with the convention $\lambda_{\R}(f)=-\infty$ when $f(t)$ has no real zero, then
\[\rho(P)=\max\left\{0,\max_{0\leq j<d}\lambda_{\R}\!\left(L_P^{(j)}\right)\right\}.
\]
Furthermore, we also derive upper and lower bounds for $\rho(P)$, see \cref{thm:rho-root-sandwich}.

The \textbf{fourth} main contribution of this paper is to study various properties of the Taylor coefficients $\A_j(P;k)$, such as the following interlacing property.
\begin{theorem}
Fix $0\leq j< d-1$.
Suppose $\A_j(P;\,\cdot\,)$ has only real zeros.  Then $\A_{j+1}(P;\,\cdot\,)$ has only real zeros and its zeros weakly
interlace those of $\A_j(P;\,\cdot\,)$.  If all zeros of $\A_j(P;\,\cdot\,)$ are simple, the interlacing is strict.
In particular, if $L_P$ is real-rooted and $\alpha_{j,1}\leq\alpha_{j,2}\leq\cdots\leq\alpha_{j,d-j}$ 
are the zeros of $\A_j(P;\,\cdot\,)$, repeated according to multiplicity, then
\[\alpha_{j,1}\leq\alpha_{j+1,1}\leq\alpha_{j,2}\leq\cdots\leq\alpha_{j+1,d-j-1}\leq\alpha_{j,d-j}.
\]
\end{theorem}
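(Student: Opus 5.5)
The plan is to recognize that the coefficient functions are, up to constant factors, the successive derivatives of $L_P$, and then apply Rolle's theorem. By the exact formula of the first contribution, $\A_j(P;k)=L_P^{(j)}(k)/j!$ as a function of $k$. Hence
\[
\frac{d}{dk}\A_j(P;k)=\frac{L_P^{(j+1)}(k)}{j!}=(j+1)\,\A_{j+1}(P;k).
\]
So $\A_{j+1}(P;\cdot)$ is a nonzero constant multiple of the derivative of $\A_j(P;\cdot)$. Moreover, $\A_j(P;\cdot)$ has degree exactly $d-j\geq 2$ for $j<d-1$, with leading coefficient $\binom{d}{j}c_d>0$. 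The statement therefore reduces to the classical fact that the derivative of a real-rooted polynomial is real-rooted and its zeros interlace.

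To prove that fact, let $f=\A_j(P;\cdot)$ of degree $n=d-j$. Write its distinct real zeros as $\beta_1<\cdots<\beta_m$, with multiplicities $\mu_1,\dots,\mu_m$ summing to $n$.

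First, each $\beta_i$ with $\mu_i\geq2$ is a zero of $f'$ of multiplicity exactly $\mu_i-1$. This contributes $n-m$ zeros of $f'$ counted with multiplicity.

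Second, by Rolle's theorem each open interval $(\beta_i,\beta_{i+1})$ contains at least one zero of $f'$. This contributes $m-1$ further zeros, distinct from the ones above.

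The total is $(n-m)+(m-1)=n-1=\deg f'$. Hence $f'$, and therefore $\A_{j+1}(P;\cdot)$, has only real zeros. Each gap $(\beta_i,\beta_{i+1})$ contains exactly one zero, and no zero lies outside $[\beta_1,\beta_m]$.

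Next, list the zeros of $f$ as $\alpha_{j,1}\leq\cdots\leq\alpha_{j,n}$ with repetition, and those of $f'$ as $\alpha_{j+1,1}\leq\cdots\leq\alpha_{j+1,n-1}$. The counting above places the zeros of $f'$ as follows:
\begin{itemize}
\item $\mu_i-1$ copies at each $\beta_i$;
\item one zero strictly inside each gap $(\beta_i,\beta_{i+1})$.
\end{itemize}
Matching indices then gives $\alpha_{j,i}\leq\alpha_{j+1,i}\leq\alpha_{j,i+1}$ for every $i$. If all zeros of $f$ are simple, then $m=n$, and every zero of $f'$ lies strictly inside a gap, so the inequalities are strict.

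For the final claim, assume $L_P=\A_0(P;\cdot)$ is real-rooted. Induction on $j$ using the step just proved shows that every $\A_j(P;\cdot)$ with $j\leq d-1$ is real-rooted. The displayed chain then holds at each level $j<d-1$.

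The only point needing care is the multiplicity bookkeeping that makes the weak interlacing indices line up. No step here is a genuine obstacle: the whole argument is Rolle's theorem combined with the derivative identity.
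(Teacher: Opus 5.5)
Your proposal is correct and follows essentially the same route as the paper. Both arguments use the identity $\frac{d}{dk}\A_j(P;k)=(j+1)\A_{j+1}(P;k)$, the multiplicity $\mu_i-1$ at each repeated zero, Rolle's theorem in each of the $m-1$ gaps, and the count $(n-m)+(m-1)=n-1=\deg f'$ to force real-rootedness and interlacing. Your explicit index matching and the induction from $\A_0(P;\cdot)=L_P$ only spell out details the paper leaves implicit; the paper states the induction separately as the corollary on the interlacing tail.
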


We also have the following Laguerre and Newton inequalities.
\begin{theorem}
Fix $0\leq j\leq d-2$.  If the polynomial $k\mapsto\A_j(P;k)$ is real-rooted, then, for every $k\in\R$,
\begin{equation*}
(j+1)\A_{j+1}(P;k)^2\geq(j+2)\A_j(P;k)\A_{j+2}(P;k).
\end{equation*}
If $\A_j(P;\,\cdot\,)$ has only simple zeros, the inequality is strict.
\end{theorem}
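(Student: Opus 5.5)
The plan is to reduce the inequality to the classical Newton--Laguerre inequality for a real-rooted polynomial, using the identity $\A_j(P;k)=L_P^{(j)}(k)/j!$ from the first main result. Set $f(k)=\A_j(P;k)$, a polynomial in $k$ of degree $n:=d-j\geq 2$ with positive leading coefficient $\binom{d}{j}c_d$. Then
\[
f'(k)=\frac{L_P^{(j+1)}(k)}{j!}=(j+1)\A_{j+1}(P;k),\qquad f''(k)=\frac{L_P^{(j+2)}(k)}{j!}=(j+1)(j+2)\A_{j+2}(P;k).
\]
Substituting, the claimed inequality $(j+1)\A_{j+1}^2\geq (j+2)\A_j\A_{j+2}$ becomes, after multiplying by $j+1$,
\[
f'(k)^2\geq f(k)f''(k).
\]
This is exactly the Laguerre inequality, so it remains to prove it for real-rooted $f$. (In fact the sharper form $(n-1)f'^2\geq n f f''$ also holds, but the weaker one is all that is needed.)

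To prove $f'^2-ff''\geq0$, write $f(k)=C\prod_{i=1}^{n}(k-\alpha_i)$ with all $\alpha_i$ real. At any $k$ with $f(k)\neq0$, the logarithmic derivative gives
\[
\left(\frac{f'}{f}\right)'(k)=-\sum_{i=1}^{n}\frac{1}{(k-\alpha_i)^2}<0,
\]
and the left side equals $(ff''-f'^2)/f^2$. Hence $f'^2-ff''>0$ at every non-zero of $f$. At a zero of $f$, the inequality reads $f'(k)^2\geq 0$, which holds trivially.

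For strictness when all zeros are simple: at a zero $k=\alpha_i$ we have $f'(\alpha_i)\neq0$, so $f'^2-ff''=f'(\alpha_i)^2>0$. At non-zeros strictness was already shown. Therefore the inequality is strict for every $k\in\R$.

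The only real obstacle is the bookkeeping of the factorial constants. One has to check that the normalisations $j!$, $(j+1)!$, $(j+2)!$ produce precisely the weights $j+1$ and $j+2$ in the statement, which the displayed computation of $f'$ and $f''$ confirms. The remaining points are to use degree $n\geq2$ (guaranteed by $j\leq d-2$) so that $\A_{j+2}$ is defined, and to handle zeros of $f$ separately, since the logarithmic-derivative argument does not apply there.
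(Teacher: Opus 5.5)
Your proposal is correct and follows the paper's proof. The paper also uses $p'=(j+1)\A_{j+1}(P;k)$ and $p''=(j+1)(j+2)\A_{j+2}(P;k)$ to reduce the statement to the Laguerre inequality $p'^2\geq pp''$, proves that inequality via the logarithmic-derivative identity, and gets strictness at simple zeros from $p'(\alpha)^2>0$; the only cosmetic difference is that you handle zeros of $f$ directly (where $ff''=0$) instead of by continuity.
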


\begin{theorem}
Suppose $L_P(t)$ is real-rooted.  Then, for every $k\geq\rho(P)$ and every $1\leq j\leq d-1$, we have 
\begin{equation*}
\left(\frac{\A_j(P;k)}{\binom dj}\right)^2\geq\frac{\A_{j-1}(P;k)}{\binom d{j-1}}\frac{\A_{j+1}(P;k)}{\binom d{j+1}}.
\end{equation*}
Equivalently,
\begin{equation*}
\A_j(P;k)^2\geq\frac{(j+1)(d-j+1)}{j(d-j)}\A_{j-1}(P;k)\A_{j+1}(P;k).
\end{equation*}
\end{theorem}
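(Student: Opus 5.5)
The plan is to reduce the statement to the classical Newton inequalities for the coefficients of a real-rooted polynomial, using the Taylor shift. Fix $k$ and set
\[
Q_k(x):=L_P(x+k)=\sum_{j=0}^{d}\A_j(P;k)\,x^j,
\]
which holds by \eqref{eq:shifted-expansion}. If $L_P$ is real-rooted, then $Q_k$ is real-rooted as well, since its zeros are $\alpha-k$ with $\alpha$ running over the zeros of $L_P$. Moreover $\deg Q_k=d$, because its leading coefficient is $c_d>0$.

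Newton's inequality states the following. If $f(x)=\sum_{j=0}^d a_j x^j$ is a real polynomial of degree $d$ with only real zeros, and we write $a_j=\binom dj b_j$, then $b_j^2\ge b_{j-1}b_{j+1}$ for all $1\le j\le d-1$. No sign condition on the coefficients is needed. I would either cite this or sketch its standard proof. The sketch goes as follows. Differentiate $j-1$ times, which preserves real-rootedness by Rolle's theorem. Then reverse the polynomial via $x^{m}f(1/x)$, which also preserves real-rootedness, with zeros at the origin handled by a degree-drop or limiting argument. Differentiate $d-j-1$ more times. The result is a real-rooted quadratic that is a positive multiple of $b_{j-1}+2b_jx+b_{j+1}x^2$. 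Its discriminant is nonnegative, which gives exactly $b_j^2\ge b_{j-1}b_{j+1}$.

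Applying Newton's inequality to $Q_k$ with $b_j=\A_j(P;k)/\binom dj$ gives the first displayed inequality. The equivalent form follows from the identity
\[
\frac{\binom dj^2}{\binom d{j-1}\binom d{j+1}}=\frac{(j+1)(d-j+1)}{j(d-j)}.
\]

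The hypothesis $k\ge\rho(P)$ is not actually used in this argument; the inequality holds for every real $k$. Its role is interpretive. For $k\ge\rho(P)$ all $\A_j(P;k)\ge0$, so the inequality reads as log-concavity of the normalized nonnegative sequence $(\A_j(P;k)/\binom dj)_j$. I would remark on this and state the general-$k$ version.

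The main obstacle is the zero handling in the reversal step of the Newton proof, for instance when $\A_0(P;k)=0$. I would handle it by perturbation: for $k$ not a zero of $L_P$, the constant term $\A_0(P;k)$ is nonzero, so the reversal is a degree-$d$ real-rooted polynomial and the argument goes through directly. The remaining finitely many values of $k$ then follow from the first case by continuity of each $\A_j(P;k)$ in $k$.
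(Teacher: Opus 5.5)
Your proof is correct. It uses the same key tool as the paper, Newton's inequalities applied to the shifted polynomial $L_P(k+x)$, but in a more general form, and so it proves more.

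The paper factors $L_P(k+x)=c_d\prod_{\nu=1}^{d}(x+y_\nu)$ with $y_\nu=k-\alpha_\nu$ and reads off $\A_j(P;k)=c_d\,e_{d-j}(y_1,\ldots,y_d)$. It then applies Newton's inequalities in the form it quotes, which is for nonnegative variables. The hypothesis $k\geq\rho(P)$ is used only to guarantee $y_\nu\geq0$, via the identity $\rho(P)=\max\{0,\lambda_{\R}(L_P)\}$ for real-rooted $L_P$.

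You instead use Newton's inequalities for an arbitrary real-rooted polynomial. This is in fact the classical statement: nonnegativity of the variables is needed for Maclaurin's inequalities, not Newton's. So you never need the formula for $\rho(P)$, and you obtain the inequality for every real $k$. Your remark is right that $k\geq\rho(P)$ only serves to turn the statement into a log-concavity assertion about a nonnegative sequence. The paper's route is a two-line deduction from a cited inequality. Yours is self-contained through the Rolle--reversal--Rolle sketch and gives a stronger result.

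There is one small inaccuracy in your last paragraph. In your sketch the reversal is applied to $Q_k^{(j-1)}$, a polynomial of degree $d-j+1$ whose constant term is $(j-1)!\,\A_{j-1}(P;k)$. It is not applied to the degree-$d$ polynomial $Q_k$ with constant term $\A_0(P;k)$; the two coincide only when $j=1$. So the perturbation should avoid the finitely many zeros of $\A_{j-1}(P;\cdot)$, not those of $L_P$.

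In fact no perturbation is needed at all. If $h$ has degree $n$ and only real zeros, then $x^nh(1/x)$ still has only real zeros even when its degree drops. Moreover, when $\A_{j-1}(P;k)=0$ the desired inequality reduces to $\A_j(P;k)^2\geq0$.

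Finally, with your order of operations the quadratic you reach is a positive multiple of $b_{j+1}+2b_jx+b_{j-1}x^2$, with the coefficients in the reverse of the order you wrote. It has the same discriminant, so nothing changes.
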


In addition, we also investigate products of polytopes, dilations, and palindromicity; see \cref{Subsection-PDP-prob}.
We also determine the least centers for several natural constructions and families, including order polytopes, matroid base polytopes, generalized permutohedra, and smooth polytopes.

The paper is organized as follows. 
In \cref{sec:exact}, we give the exact formulas for the Taylor coefficients $\A_{j}(P;k)$.
General bounds for the center $k$ are proved in \cref{sec:bounds}. 
In \cref{sec:unit-center}, we construct a family of generalized Reeve simplices and we use them to prove \cref{Introduction-Cor-ODD-EVEN}.
\cref{sec:optimal-universal-shift} is devoted to the proofs of \cref{Intorodct-One}, \cref{Intorodct-Two}, and \cref{Intorodct-Three}.
In \cref{sec:root-geometry}, we focus mainly on real and integer Taylor-nonnegativity indices.
In \cref{Section-Prop-Taylor}, we mainly study various properties of the Taylor coefficients  $\A_j(P;k)$, including interlacing property, Laguerre and Newton inequalities.
Some natural classes are treated in \cref{sec:natural-classes}.  
Finally, \cref{sec:conclusion} records the remaining questions.

\section{Exact formulas for Taylor coefficients}
\label{sec:exact}

Recall that a smooth real-valued function is \emph{absolutely monotone} on an interval if all of its derivatives are nonnegative there. For polynomials, Taylor nonnegativity is exactly the onset of absolute monotonicity.

\begin{proposition}\label{prop:absolute-monotonicity}
Let $f\in\R[t]$ have degree at most $d$, and write
\[
 f(k+x)=\sum_{j=0}^{d}a_jx^j.
\]
Then $a_j\geq0$ for every $j$ if and only if $f^{(m)}(t)\geq0$ for every $t\in[k,\infty)$ and every $m\in\N$.
In particular, for a lattice polytope $P$, Taylor nonnegativity at $k$ is
equivalent to absolute monotonicity of $L_P(t)$ on $[k,\infty)$.
\end{proposition}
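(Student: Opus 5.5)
The plan is to identify the coefficients $a_j$ with scaled derivatives at the center and then use the Taylor expansion of each derivative. Since $f$ is a polynomial of degree at most $d$, Taylor's formula is exact, so $a_j=f^{(j)}(k)/j!$ for $0\leq j\leq d$, and $f^{(m)}\equiv 0$ for $m>d$.

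\emph{Reverse direction.} Suppose $f^{(m)}(t)\geq0$ for all $t\geq k$ and all $m\in\N$. Specializing to $t=k$ gives $a_j=f^{(j)}(k)/j!\geq0$ for every $j$.

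\emph{Forward direction.} Assume $a_j\geq0$ for all $j$, and fix $m\in\N$ and $t=k+x$ with $x\geq0$. If $m>d$, then $f^{(m)}(t)=0$. Otherwise, differentiating $f(k+x)=\sum_j a_jx^j$ term by term $m$ times in $x$ gives
\[
f^{(m)}(k+x)=\sum_{j=m}^{d}a_j\,\frac{j!}{(j-m)!}\,x^{j-m}.
\]
This is a sum of products of nonnegative numbers whenever $x\geq0$, hence nonnegative. Therefore $f^{(m)}\geq0$ on $[k,\infty)$ for every $m$.

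\emph{Application to lattice polytopes.} Take $f=L_P$, which has degree $d$. By \eqref{eq:shifted-expansion}, the coefficients of $L_P(k+x)$ in powers of $x$ are exactly $\A_j(P;k)$. So Taylor nonnegativity at $k$ is the condition $a_j\geq0$ above, and the equivalence just proved says this is the same as absolute monotonicity of $L_P$ on $[k,\infty)$.

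The argument has no real obstacle. The only points needing care are the identification $a_j=f^{(j)}(k)/j!$ (exactness of Taylor's formula for polynomials) and noting that the derivatives of order greater than $d$ vanish identically.
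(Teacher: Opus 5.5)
Your proof is correct and follows the same route as the paper: termwise differentiation of $f(k+x)=\sum_j a_jx^j$ for the forward direction, and evaluation at the left endpoint via $a_j=f^{(j)}(k)/j!$ for the converse, with derivatives of order above $d$ vanishing identically. The specialization to $L_P$ through \eqref{eq:shifted-expansion} is also exactly how the paper obtains the statement about lattice polytopes.
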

\begin{proof}
For $0\leq m\leq d$ and $x\geq0$, termwise differentiation gives
\[f^{(m)}(k+x)=\sum_{j=m}^{d}\frac{j!}{(j-m)!}a_jx^{j-m}.
\]
Assume first that every $a_j$ is nonnegative. We have $f^{(m)}(k+x)\geq0$ because every factor in every summand is nonnegative.  Derivatives of order $m>d$ vanish identically and are therefore nonnegative as well.
Conversely, suppose every derivative of $f$ is nonnegative on
$[k,\infty)$.  Evaluating at the left endpoint and using the uniqueness of
the Taylor expansion gives $a_j=\frac{f^{(j)}(k)}{j!}\geq0$ ($0\leq j\leq d$).
This proves both implications.
\end{proof}

For a finite list $a_1,\ldots,a_d$, let $e_n(a_1,\ldots,a_d)$ denote the \emph{elementary symmetric polynomial} of degree $n$, with $e_0=1$ and
$$\prod_{i=1}^d (1+a_iy)=\sum_{n=0}^{d}e_n(a_1,\ldots,a_d)y^n.$$ 
For more on $e_n(a_1,\ldots,a_d)$, we refer to \cite[Chapter 7]{RP.Stanley99}.
Let $s(d,\ell)$ denote an unsigned Stirling number of the first kind (see \cite[Chapter 1]{RP.Stanley}), characterized by
\begin{equation}\label{eq:stirling-def}
y(y+1)\cdots(y+d-1) =\sum_{\ell=0}^{d}s(d,\ell)y^\ell.
\end{equation}

\begin{theorem}\label{thm:exact-formulas}
Assume that the Ehrhart polynomial and the $h^*$-polynomial of the polytope $P$ are, respectively, 
\[L_P(t)=\sum_{m=0}^{d}c_mt^m,\qquad h_P^*(z)=\sum_{r=0}^{d}h_r^*z^r.
\]
For every $k\in\R$ and every $0\leq j\leq d$, we have 
\begin{align}
 \A_j(P;k) &=\frac{L_P^{(j)}(k)}{j!}, \label{eq:formula-derivative}\\
 &=\sum_{m=j}^{d}\binom{m}{j}c_mk^{m-j}, \label{eq:formula-monomial}\\
 &=\frac1{d!}\sum_{r=0}^{d}h_r^* e_{d-j}(k-r+1,k-r+2,\ldots,k-r+d), \label{eq:formula-elementary}\\
 &=\frac1{d!}\sum_{r=0}^{d}h_r^* \sum_{\ell=j}^{d} s(d,\ell)\binom{\ell}{j} (k-r+1)^{\ell-j}. \label{eq:formula-stirling}
\end{align}
Terms with $r>\deg h_P^*$ may of course be omitted.
\end{theorem}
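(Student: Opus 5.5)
The plan is to derive the four formulas in sequence, each from the previous one or from a known expansion of $L_P(t)$.

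\emph{Derivative form and monomial form.} Since $L_P$ is a polynomial of degree $d$, its Taylor expansion about $k$ is finite and exact. By uniqueness of the coefficients in \eqref{eq:shifted-expansion}, $\A_j(P;k)=L_P^{(j)}(k)/j!$, which is \eqref{eq:formula-derivative}; this is the same uniqueness argument used in \cref{prop:absolute-monotonicity}. Next I will substitute $t=k+x$ into $\sum_m c_m t^m$ and apply the binomial theorem, $(k+x)^m=\sum_j\binom{m}{j}k^{m-j}x^j$. Collecting the coefficient of $x^j$ gives \eqref{eq:formula-monomial}. Equivalently, one can differentiate $c_mt^m$ $j$ times and divide by $j!$.

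\emph{Elementary symmetric form.} Here I start from \eqref{eq:hstar-expansion}. The key rewriting is
\[
\binom{t+d-r}{d}=\frac1{d!}\prod_{i=1}^{d}(t-r+i)=\frac1{d!}\prod_{i=1}^{d}\bigl(x+(k-r+i)\bigr),\qquad x=t-k.
\]
Expanding this product in powers of $x$, the coefficient of $x^j$ is $e_{d-j}(k-r+1,\ldots,k-r+d)$. This follows from the defining identity $\prod_i(1+a_iy)=\sum_n e_ny^n$ after homogenizing: $\prod_i(x+a_i)=\sum_n e_n(a)x^{d-n}$. Summing against $h_r^*$ and invoking uniqueness of the expansion yields \eqref{eq:formula-elementary}.

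\emph{Stirling form.} Set $y=x+(k-r+1)$. Then $\prod_{i=1}^d(x+k-r+i)=y(y+1)\cdots(y+d-1)=\sum_\ell s(d,\ell)y^\ell$ by \eqref{eq:stirling-def}. Expanding $y^\ell=(x+(k-r+1))^\ell$ binomially and extracting the coefficient of $x^j$ gives $\sum_{\ell\ge j}s(d,\ell)\binom{\ell}{j}(k-r+1)^{\ell-j}$, which is \eqref{eq:formula-stirling}. The remark about omitting terms with $r>\deg h_P^*$ is immediate, since $h_r^*=0$ there.

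The only point needing care is the index bookkeeping in the third step: the product has exactly $d$ linear factors, so the coefficient of $x^j$ must be $e_{d-j}$ and not $e_j$. I also need to confirm that \eqref{eq:hstar-expansion} holds as a polynomial identity for real $t$, not just for $t\in\N$. That is guaranteed because both sides are polynomials in $t$ that agree on $\N$. Otherwise the argument is routine.
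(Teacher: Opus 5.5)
Your proposal is correct and follows the paper's proof essentially step for step. The common route is Taylor's formula with uniqueness of the expansion for the derivative form, and the binomial theorem for the monomial form. For the $h^*$ form, both rewrite $\binom{t+d-r}{d}$ as $\frac{1}{d!}\prod_{i=1}^d(x+k-r+i)$ with $x=t-k$, read off $e_{d-j}$, and then substitute $y=x+k-r+1$ to invoke the Stirling expansion of the rising factorial. Your extra checks on the $e_{d-j}$ indexing and on the $h^*$-expansion holding as a polynomial identity for real $t$ are valid.
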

\begin{proof}
Set $x=t-k$.  Taylor's formula is an exact finite identity for the
polynomial $L_P$:
\[
  L_P(k+x)=\sum_{j=0}^{d}\frac{L_P^{(j)}(k)}{j!}x^j.
\]
Comparing this with \eqref{eq:shifted-expansion} gives \eqref{eq:formula-derivative}.
Next, substitute $t=k+x$ into the ordinary monomial expansion and use the
binomial theorem:
\begin{align*}
L_P(k+x)=\sum_{m=0}^{d}c_m(k+x)^m=\sum_{m=0}^{d}c_m\sum_{j=0}^{m}\binom{m}{j}k^{m-j}x^j
=\sum_{j=0}^{d}\left(\sum_{m=j}^{d}\binom{m}{j}c_mk^{m-j}\right)x^j.
\end{align*}
Coefficient comparison proves \eqref{eq:formula-monomial}.
For the $h^*$-formula, use \eqref{eq:hstar-expansion}:
\begin{align}\label{eq:factorized-shift}
L_P(k+x)=\sum_{r=0}^{d}h_r^*\binom{x+k+d-r}{d}=\frac1{d!}\sum_{r=0}^{d}h_r^*\prod_{\nu=1}^{d}(x+k-r+\nu).
\end{align}
For arbitrary constants $a_1,\ldots,a_d$,
\[\prod_{\nu=1}^{d}(x+a_\nu)=\sum_{j=0}^{d}e_{d-j}(a_1,\ldots,a_d)x^j.
\]
Apply this identity to $a_\nu=k-r+\nu$ in \eqref{eq:factorized-shift}; the coefficient of $x^j$ is exactly the right-hand side of \eqref{eq:formula-elementary}.
Finally, set $y=x+k-r+1$.  The product associated with the index $r$ is the rising factorial $y(y+1)\cdots(y+d-1)$. By \eqref{eq:stirling-def}, it equals
\[\sum_{\ell=0}^{d}s(d,\ell)(x+k-r+1)^\ell.
\]
Expanding the final power by the binomial theorem, the coefficient of $x^j$ is
\[\sum_{\ell=j}^{d}s(d,\ell)\binom{\ell}{j}(k-r+1)^{\ell-j}.
\]
Summing over $r$ and dividing by $d!$ proves \eqref{eq:formula-stirling}.
\end{proof}

The coefficient vectors at different centers are related by binomial coefficients.

\begin{theorem}\label{thm:translation-semigroup}
For all real $k,\delta$ and every $0\leq j\leq d$, we have
\begin{equation}\label{eq:translation-semigroup}
 \A_j(P;k+\delta)=\sum_{\ell=j}^{d}\binom{\ell}{j}\delta^{\ell-j}\A_\ell(P;k).
\end{equation}
Moreover,
\begin{equation}\label{eq:coefficient-ode}
\frac{d}{dk}\A_j(P;k)=(j+1)\A_{j+1}(P;k)\qquad(0\leq j<d),
\end{equation}
and $\A_d(P;k)=c_d$ is constant.
\end{theorem}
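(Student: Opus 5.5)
The plan is to use \eqref{eq:formula-derivative} from \cref{thm:exact-formulas}, which identifies $\A_j(P;k)$ with the scaled derivative $L_P^{(j)}(k)/j!$. Both claims then reduce to Taylor's formula for polynomials. Everything is an exact finite identity, since $L_P$ has degree $d$, so no analytic estimates are needed.

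For \eqref{eq:translation-semigroup}, fix $k$ and $\delta$ and apply the defining expansion \eqref{eq:shifted-expansion} at center $k$:
\[L_P(t)=\sum_{\ell=0}^{d}\A_\ell(P;k)(t-k)^\ell .\]
Write $t-k=(t-k-\delta)+\delta$ and expand each power by the binomial theorem:
\[(t-k)^\ell=\sum_{j=0}^{\ell}\binom{\ell}{j}\delta^{\ell-j}(t-k-\delta)^j .\]
Interchanging the finite sums, the coefficient of $(t-(k+\delta))^j$ is $\sum_{\ell=j}^{d}\binom{\ell}{j}\delta^{\ell-j}\A_\ell(P;k)$. The expansion in powers of $t-(k+\delta)$ is unique, because $\{(t-c)^j\}_{j=0}^d$ is a basis of the polynomials of degree at most $d$. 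Comparing with \eqref{eq:shifted-expansion} at center $k+\delta$ therefore gives the identity. The argument places no sign restriction on $k$ or $\delta$; it is exactly the computation in the proof of \eqref{eq:formula-monomial}, now centered at $k$ instead of $0$.

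For \eqref{eq:coefficient-ode}, differentiate \eqref{eq:formula-derivative} in $k$:
\[\frac{d}{dk}\A_j(P;k)=\frac{L_P^{(j+1)}(k)}{j!}=(j+1)\frac{L_P^{(j+1)}(k)}{(j+1)!}=(j+1)\A_{j+1}(P;k).\]
As a cross-check, one can also read off the coefficient of $\delta^1$ in \eqref{eq:translation-semigroup}, which is the term $\ell=j+1$, namely $(j+1)\A_{j+1}(P;k)$; this is the derivative at $\delta=0$. Finally, $\A_d(P;k)=L_P^{(d)}(k)/d!=c_d$ because $L_P$ has degree $d$ with leading coefficient $c_d$. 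Alternatively, \eqref{eq:formula-monomial} with $j=d$ gives $\A_d(P;k)=c_d$ directly.

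There is no real obstacle here. The only point requiring care is the index bookkeeping when interchanging the double sum over $0\le j\le \ell\le d$, and invoking uniqueness of the expansion.
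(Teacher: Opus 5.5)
Your proposal is correct and follows the paper's proof essentially verbatim: re-expand $(t-k)^\ell=\bigl((t-(k+\delta))+\delta\bigr)^\ell$ binomially, interchange the finite sums, and invoke uniqueness of the expansion at $k+\delta$. The derivative identity and $\A_d(P;k)=c_d$ then follow from \eqref{eq:formula-derivative}, or alternatively from \eqref{eq:formula-monomial}, exactly as in the paper.
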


\begin{proof}
Set $y=t-(k+\delta)$.  Then $t-k=y+\delta$.  Starting from the expansion
at $k$ gives
\begin{align*}
 L_P(t)=\sum_{\ell=0}^{d}\A_\ell(P;k)(y+\delta)^\ell
 =\sum_{\ell=0}^{d}\A_\ell(P;k)\sum_{j=0}^{\ell}\binom{\ell}{j}\delta^{\ell-j}y^j
 =\sum_{j=0}^{d}\left(\sum_{\ell=j}^{d}\binom{\ell}{j}\delta^{\ell-j}\A_\ell(P;k)\right)y^j.
\end{align*}
The uniqueness of the expansion in powers of $y=t-(k+\delta)$ proves \eqref{eq:translation-semigroup}.
Equation \eqref{eq:formula-derivative} gives
\[\frac{d}{dk}\A_j(P;k)=\frac{L_P^{(j+1)}(k)}{j!}=(j+1)\A_{j+1}(P;k),
\]
which is \eqref{eq:coefficient-ode}.  The assertion about $\A_d$ follows either from the same formula or from \eqref{eq:formula-monomial}.
\end{proof}

\begin{corollary}\label{cor:positivity-persistence}
Suppose $\A_j(P;k_0)\geq0$ for all $j$. Then $\A_j(P;k)\geq0$ for every $j$ and every $k\geq k_0$. 
If $k>k_0$, all these coefficients are positive.
\end{corollary}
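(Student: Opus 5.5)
The plan is to apply \cref{thm:translation-semigroup} with base center $k_0$ and shift $\delta=k-k_0\geq0$. By \eqref{eq:translation-semigroup},
\[
\A_j(P;k)=\A_j(P;k_0+\delta)=\sum_{\ell=j}^{d}\binom{\ell}{j}\delta^{\ell-j}\A_\ell(P;k_0).
\]
Every factor in every summand is nonnegative: the binomial coefficients are positive, $\delta^{\ell-j}\geq0$ (with the convention $\delta^0=1$), and $\A_\ell(P;k_0)\geq0$ by hypothesis. Hence $\A_j(P;k)\geq0$ for all $j$ and all $k\geq k_0$. This is the first assertion.

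For strict positivity when $k>k_0$, so that $\delta>0$, I would isolate the top term $\ell=d$ of the sum. It equals $\binom{d}{j}\delta^{d-j}\A_d(P;k_0)$. By \cref{thm:translation-semigroup}, $\A_d(P;k_0)=c_d$, the volume of the full-dimensional polytope $P$, so $c_d>0$. Since $\delta>0$, the top term is strictly positive. All other terms are nonnegative by the first part, so $\A_j(P;k)>0$ for every $0\leq j\leq d$.

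There is no real obstacle here. The only point needing care is that strictness comes from the leading coefficient $c_d>0$ rather than from the hypothesis, which gives only nonnegativity; this is also why strictness requires $\delta>0$. The argument does not need the $j=d$ case to be treated separately, since the top term is then just $c_d>0$ itself.
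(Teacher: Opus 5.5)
Your proof is correct and matches the paper's argument: apply \eqref{eq:translation-semigroup} with $\delta=k-k_0\geq0$, note that every summand is nonnegative, and for $\delta>0$ get strict positivity from the $\ell=d$ term $\binom{d}{j}\delta^{d-j}c_d>0$. The only cosmetic difference is that the paper treats $j=d$ separately, whereas you absorb it into the same top-term observation via $\delta^0=1$; this is equally valid.
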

\begin{proof}
Write $k=k_0+\delta$ with $\delta\geq0$ and apply \eqref{eq:translation-semigroup}.  Every summand is nonnegative.  If
$\delta>0$ and $j<d$, the term with $\ell=d$ is
\[\binom{d}{j}\delta^{d-j}\A_d(P;k_0)=\binom{d}{j}\delta^{d-j}c_d>0.
\]
For $j=d$, the coefficient is $c_d>0$ directly.
\end{proof}

At an integral center, three coefficients are automatically positive.

\begin{proposition}\label{prop:automatic-coefficients}
For every integer $k\geq0$, we have 
\[\A_0(P;k)>0,\qquad\A_{d-1}(P;k)>0,\qquad\A_d(P;k)>0.
\]
Hence only $\A_1(P;k),\ldots,\A_{d-2}(P;k)$ can be negative.
\end{proposition}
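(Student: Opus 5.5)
We treat the three coefficients separately, using the formulas of \cref{thm:exact-formulas}.

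\emph{The coefficient $\A_d$.} By \cref{thm:translation-semigroup} we have $\A_d(P;k)=c_d$, and $c_d=\operatorname{vol}(P)>0$ because $P$ is full-dimensional.

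\emph{The coefficient $\A_0$.} By \eqref{eq:formula-derivative}, $\A_0(P;k)=L_P(k)$. For an integer $k\geq0$ this is $|kP\cap\Z^d|$. The set $kP$ contains a lattice point: the origin when $k=0$, and $k$ times any vertex of $P$ when $k\geq1$. Hence $L_P(k)\geq1>0$. Alternatively, \eqref{eq:hstar-expansion} gives $L_P(k)\geq h_0^*\binom{k+d}{d}=\binom{k+d}{d}\geq1$, since every $h_r^*\geq0$ and every binomial coefficient $\binom{k+d-r}{d}$ with $k\geq0$ and $0\le r\le d$ is nonnegative.

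\emph{The coefficient $\A_{d-1}$.} This is the only step needing care. Using \eqref{eq:formula-monomial} with $j=d-1$,
\[
\A_{d-1}(P;k)=c_{d-1}+d\,c_d\,k .
\]
Here $c_{d-1}$ is half the normalized boundary volume, so $c_{d-1}>0$, and $c_d>0$. Thus for $k\geq0$ the expression is a sum of a positive term and a nonnegative term, hence positive. The only input is the positivity of $c_{d-1}$, which the paper already quotes from \cite[Theorem 5.6]{BeckRobins2015}. If one prefers a self-contained route, \eqref{eq:formula-elementary} gives
\[
\A_{d-1}=\frac{1}{d!}\sum_r h_r^*\,e_1(k-r+1,\ldots,k-r+d)=\frac{1}{d!}\sum_r h_r^*\, d\Bigl(k-r+\frac{d+1}{2}\Bigr).
\]
Each such term with $r\le \frac{d+1}{2}+k$ is nonnegative, but large $r$ would require a comparison. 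Therefore I would rely on the geometric fact $c_{d-1}>0$.

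Since $\A_0,\A_{d-1},\A_d$ are positive at every integer $k\geq0$, only $\A_1,\dots,\A_{d-2}$ can be negative. When $d\le 2$ this list of indices is empty, so in those dimensions no coefficient is negative.
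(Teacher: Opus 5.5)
Your proof is correct and follows the paper's argument: $\A_0(P;k)=L_P(k)=|kP\cap\Z^d|>0$, $\A_d(P;k)=c_d>0$, and $\A_{d-1}(P;k)=c_{d-1}+dkc_d>0$ by the monomial formula together with the known positivity of $c_{d-1}$. Your side remarks (the $h^*$-expansion bound for $\A_0$, and the empty range of possibly negative indices when $d\leq 2$) are accurate but not needed.
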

\begin{proof}
By the defining counting property,
$\A_0(P;k)=L_P(k)=|kP\cap\Z^d|>0$.  Formula \eqref{eq:formula-monomial} gives
\[  \A_d(P;k)=c_d>0, \qquad \A_{d-1}(P;k)=c_{d-1}+dkc_d>0.
\]
This completes the proof.
\end{proof}

\begin{proposition}\label{prop:index-gap}
For every lattice polytope $P$, we have 
\[\tau(P)\leq\tau^+(P)\leq\tau(P)+1.
\]
\end{proposition}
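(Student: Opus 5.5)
The first inequality $\tau(P)\leq\tau^+(P)$ follows directly from the definitions. Any integer $k\in\N$ at which all $\A_j(P;k)>0$ certainly has all $\A_j(P;k)\geq0$. So the set over which $\tau^+(P)$ is minimized is contained in the set over which $\tau(P)$ is minimized, and the minimum over the larger set is at most the minimum over the smaller one. Both minima exist, as observed right after \cref{def:positivity-indices}.

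For the second inequality, set $k_0=\tau(P)$. By definition, $\A_j(P;k_0)\geq0$ for every $0\leq j\leq d$. I will apply \cref{cor:positivity-persistence} with the strictly larger center $k=k_0+1$. Concretely, write $\delta=1$ in the translation formula \eqref{eq:translation-semigroup}:
\[\A_j(P;k_0+1)=\sum_{\ell=j}^{d}\binom{\ell}{j}\A_\ell(P;k_0).\]
Every summand is nonnegative. For $j<d$, the summand with $\ell=d$ equals $\binom{d}{j}c_d>0$, since $c_d$ is the volume of the full-dimensional polytope $P$. For $j=d$ we have $\A_d(P;k_0+1)=c_d>0$ directly. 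Thus all Taylor coefficients at $k_0+1$ are strictly positive. Since $k_0+1\in\N$, this gives $\tau^+(P)\leq k_0+1=\tau(P)+1$.

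There is no real obstacle here; the only point to check is that the shifted center $\tau(P)+1$ is still a nonnegative integer, so it is admissible in the definition of $\tau^+(P)$. The strict positivity comes entirely from the leading coefficient $c_d>0$ propagating through the binomial translation.
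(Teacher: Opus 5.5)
Your proof is correct and follows the paper's argument: the first inequality comes straight from the definitions, and the second applies \cref{cor:positivity-persistence} at $k_0=\tau(P)$ with displacement $\delta=1$. Your explicit unpacking of the translation formula, with the $\ell=d$ term $\binom{d}{j}c_d>0$ supplying strict positivity, is exactly what that corollary's proof does.
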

\begin{proof}
The first inequality follows immediately from the definitions.  By definition of $\tau(P)$, every coefficient at the integer center
$k_0=\tau(P)$ is nonnegative.  Apply \cref{cor:positivity-persistence} with the positive displacement
$\delta=1$.  Every coefficient at $k_0+1$ is then positive, so $\tau^+(P)\leq k_0+1=\tau(P)+1$.
\end{proof}

\begin{corollary}\label{cor:root-free}
If $P$ is Taylor nonnegative at $k\geq0$, then $L_P(k)\geq0$ and $L_P(t)>0$ for every real $t>k$. 
Thus $L_P(t)$ has no real root in $(k,\infty)$.  
If, in addition, $L_P(k)>0$ (in particular, if $k$ is a nonnegative integer)
then $L_P(t)$ has no real root in $[k,\infty)$.
\end{corollary}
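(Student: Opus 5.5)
The plan is to apply the shifted expansion \eqref{eq:shifted-expansion} at the center $k$ itself and read off signs. Write $t=k+x$ with $x\geq0$. Then
\[
L_P(t)=\sum_{j=0}^{d}\A_j(P;k)\,x^j .
\]
By hypothesis, every $\A_j(P;k)$ is nonnegative, and $x^j\geq0$ for $x\geq0$. Hence $L_P(t)\geq0$ for every $t\geq k$, and in particular $L_P(k)=\A_0(P;k)\geq0$. Equivalently, one may invoke \cref{prop:absolute-monotonicity} with $m=0$.

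For strict positivity when $t>k$ (so $x>0$), I isolate the top term. By \cref{thm:translation-semigroup}, $\A_d(P;k)=c_d$, which is the volume of $P$ and hence strictly positive. The remaining terms are nonnegative, so
\[
L_P(t)\geq c_d\,x^d>0 .
\]
Alternatively, \cref{cor:positivity-persistence} gives $L_P(t)=\A_0(P;t)>0$ for $t>k$ directly. Either way, $L_P$ has no real root in $(k,\infty)$.

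For the last assertion, if additionally $L_P(k)>0$, then $k$ is not a root either, so $L_P$ has no real root in $[k,\infty)$. When $k$ is a nonnegative integer, $L_P(k)=|kP\cap\Z^d|\geq1$, since $kP$ contains the lattice point $k v$ for any vertex $v$ of $P$ (this is the content of \cref{prop:automatic-coefficients}). No step poses a real obstacle; the only point needing care is using the positivity of $c_d$ to upgrade nonnegativity to strict positivity on the open ray.
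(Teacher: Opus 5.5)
Your proposal is correct and follows essentially the same argument as the paper. Both substitute $t=k+x$ into the shifted expansion, use nonnegativity of the $\A_j(P;k)$ together with the strictly positive top term $c_dx^d$ for $x>0$, and invoke $L_P(k)=|kP\cap\Z^d|>0$ at nonnegative integer centers.
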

\begin{proof}
Write $t=k+x$ with $x\geq0$.  Every term in $L_P(k+x)=\sum_{j=0}^{d}\A_j(P;k)x^j$
is nonnegative.  At $x=0$ this gives $L_P(k)=\A_0(P;k)\geq0$.  If $x>0$,
the leading-degree summand is
\[\A_d(P;k)x^d=c_dx^d>0,
\]
so the entire sum is positive.  Finally, if $k\in\N$, then
$L_P(k)=|kP\cap\Z^d|>0$; the same conclusion holds whenever positivity at
the endpoint is assumed directly.
\end{proof}

\section{Bounds for the center}
\label{sec:bounds}

We first prove the degree bound directly from the factorization in \eqref{eq:factorized-shift}.

\begin{theorem}\label{thm:degree-bound}
Let $P$ be a $d$-dimensional lattice polytope and let $s=\deg h_P^*(z)$.  If
\[k\geq\max\{0,s-1\},
\]
then $P$ is Taylor positive at $k$; that is, $\A_j(P;k)>0$ ($0\leq j\leq d$).
The conclusion holds for every real $k$ satisfying this inequality, not only for integral $k$.
\end{theorem}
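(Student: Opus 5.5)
Our plan is to read off the coefficients directly from the factorization \eqref{eq:factorized-shift}, that is, from formula \eqref{eq:formula-elementary} of \cref{thm:exact-formulas}. Write
\[
\A_j(P;k)=\frac1{d!}\sum_{r=0}^{s}h_r^*\,e_{d-j}(k-r+1,k-r+2,\ldots,k-r+d),
\]
where the terms with $r>s$ have been dropped because $h_r^*=0$ for them. By Stanley's theorem every $h_r^*$ is nonnegative, and $h_0^*=1$. The plan is to show three things: each elementary symmetric polynomial in this sum is nonnegative; the $r=0$ summand is strictly positive; and hence $\A_j(P;k)>0$.

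First we check the sign of the arguments. Fix $r\le s$ and $\nu\ge1$. Then
\[
k-r+\nu\ \ge\ k-s+1\ \ge\ 0,
\]
using the hypothesis $k\ge s-1$. So every argument of $e_{d-j}$ is a nonnegative real number, and $e_{d-j}$ of nonnegative reals is nonnegative. This holds for arbitrary real $k$, which is why the statement applies to all real $k$ and not only integral ones. For the degenerate case $s=0$, the hypothesis reads $k\ge 0$, and only the $r=0$ term survives.

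Next we show the $r=0$ term is strictly positive. Its arguments are $k+1,\ldots,k+d$, all of which are at least $1$ because $k\ge0$ (from $k\ge\max\{0,s-1\}$). Therefore
\[
e_{d-j}(k+1,\ldots,k+d)>0 \qquad\text{for every } 0\le j\le d,
\]
with $e_0=1$ covering the case $j=d$. Since $h_0^*=1>0$, the $r=0$ summand is strictly positive, and the remaining summands are nonnegative. This gives $\A_j(P;k)>0$ for all $j$.

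The main point requiring care is the claim that $h_0^*=1$ and $h_r^*\ge 0$. This is standard: $h_0^*=L_P(0)=1$, and nonnegativity is Stanley's theorem. One should also note that $k-s+1$ may equal $0$, so some summands with $r\ge1$ can vanish; strictness therefore has to come from the $r=0$ term, and this is exactly why we need $k\ge0$ separately from $k\ge s-1$.
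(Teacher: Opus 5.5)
Your proposal is correct and is essentially the paper's proof. The paper also drops the terms with $r>s$ in \eqref{eq:factorized-shift}, notes that every constant $k-r+\nu$ is at least $k-s+1\geq 0$ (so each product has nonnegative coefficients), and obtains strict positivity from the $r=0$ summand, which has weight $h_0^*=1$ and constants $k+1,\ldots,k+d>0$; phrasing this through $e_{d-j}$ via \eqref{eq:formula-elementary} rather than through the product's coefficients is only a notational difference.
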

\begin{proof}
Because $h_r^*=0$ for $r>s$, equation \eqref{eq:factorized-shift} becomes
\begin{equation}\label{eq:degree-bound-factorization}
 L_P(k+x) =\frac1{d!}\sum_{r=0}^{s}h_r^*\prod_{\nu=1}^{d}(x+k-r+\nu).
\end{equation}
Fix $r\leq s$. The smallest constant appearing in its product is $k-r+1\geq k-s+1\geq0$.
It follows that every coefficient of $\prod_{\nu=1}^{d}(x+k-r+\nu)$ is nonnegative.
Each weight $h_r^*$ in \eqref{eq:degree-bound-factorization} is also nonnegative.

It remains to prove positivity rather than mere nonnegativity.
Consider the $r=0$ summand.  Its weight is $h_0^*=1$, and its constants are $k+1,k+2,\ldots,k+d$.
Since $k\geq0$, all of these numbers are positive.  Every coefficient of $\prod_{\nu=1}^{d}(x+k+\nu)$ is positive.  Adding the remaining nonnegative summands preserves positivity in every degree.
\end{proof}

The theorem is sometimes more naturally written in terms of codegree:
since $s-1=d-\codeg(P)$, the degree bound is
$k\geq\max\{0,d-\codeg(P)\}$.

We next record the independent dimension bound hidden in the proof of the real-root theorem of Beck, De Loera, Develin, Pfeifle, and Stanley \cite{BeckEtAl2005}.  To make the deduction transparent, we state precisely the ingredient they prove.
For $0\leq r,\ell\leq d$ and $n\in\R$, define
\begin{equation}\label{eq:g-definition}
g_r(n,\ell)=\sum_{\substack{I\subseteq\{0,1,\ldots,d-1\}\\|I|=d-\ell}}\prod_{a\in I}(n+d-r-a).
\end{equation}

\begin{lemma}{\em (Beck--De Loera--Develin--Pfeifle--Stanley, \cite[Lemmas~4.5--4.6]{BeckEtAl2005})}\label{lem:beck}
Let $B=\lfloor d/2\rfloor$.  For each
$0\leq\ell<d$ there is a
constant $\lambda_\ell>0$ such that
\begin{equation}\label{eq:beck-inequality}
  g_r(B,\ell)\geq\lambda_\ell(d+1-2r)\qquad(0\leq r\leq d).
\end{equation}
More explicitly, with the notation in \eqref{eq:g-definition}, one may
take
\begin{align}
 \lambda_\ell=\frac12\bigl(g_B(B,\ell)-g_{B+1}(B,\ell)\bigr)
 =\frac d2\sum_{\substack{I\subseteq\{1,\ldots,d-1\}\\|I|=d-\ell-1}}\prod_{a\in I}(d-a)>0.
\end{align}
\end{lemma}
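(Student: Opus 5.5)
The plan is to exploit the fact that $g_r(n,\ell)$ is an elementary symmetric polynomial of a run of consecutive integers, and to compare consecutive values of $r$. Reindexing $a\mapsto d-a$ in \eqref{eq:g-definition} gives
\[
g_r(B,\ell)=e_{d-\ell}(S_r),\qquad S_r=\{B-r+1,\,B-r+2,\ldots,B-r+d\}.
\]
Passing from $S_r$ to $S_{r+1}$ replaces the top entry $B-r+d$ by $B-r$ and keeps the common part $T_r=\{B-r+1,\ldots,B-r+d-1\}$. Since $e_m(T\cup\{x\})=e_m(T)+x\,e_{m-1}(T)$, and the two replaced entries differ by exactly $d$, we get the difference identity
\[
g_r(B,\ell)-g_{r+1}(B,\ell)=d\,e_{d-\ell-1}(T_r).
\]
For $r=B$ we have $T_B=\{1,\ldots,d-1\}$. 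Hence $2\lambda_\ell:=g_B(B,\ell)-g_{B+1}(B,\ell)=d\,e_{d-\ell-1}(1,\ldots,d-1)$, which is the displayed formula for $\lambda_\ell$. It is positive because $0\le d-\ell-1\le d-1$ when $0\le\ell<d$.

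\textbf{Case $r\le B$.} Here every element of $T_r$ is at least $1$, and decreasing $r$ increases every entry of $T_r$. So $e_{d-\ell-1}(T_r)\ge e_{d-\ell-1}(T_B)$, and each difference $g_{r'}-g_{r'+1}$ with $r\le r'<B$ is at least $2\lambda_\ell$. Telescoping gives $g_r\ge g_B+2\lambda_\ell(B-r)$. Next, $g_B=e_{d-\ell}(1,\ldots,d)=e_{d-\ell}(1,\ldots,d-1)+d\,e_{d-\ell-1}(1,\ldots,d-1)\ge 2\lambda_\ell$. Therefore
\[
g_r\ge 2\lambda_\ell(B-r+1)\ge\lambda_\ell(d+1-2r),
\]
where the last step uses $2B+2\ge d+1$.

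\textbf{Case $B<r\le d$.} First, $g_{B+1}=e_{d-\ell}(0,1,\ldots,d-1)=e_{d-\ell}(1,\ldots,d-1)\ge0$. For $B<r'<r$ I will bound the differences from above, namely $e_{d-\ell-1}(T_{r'})\le e_{d-\ell-1}(1,\ldots,d-1)$. Telescoping then gives
\[
g_r\ge g_{B+1}-2\lambda_\ell(r-B-1)\ge -2\lambda_\ell(r-B-1)\ge-\lambda_\ell(2r-d-1)=\lambda_\ell(d+1-2r),
\]
since $2r-d-1=2(r-B-1)+(2B+1-d)$ and $2B+1-d\ge0$.

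The main obstacle is this upper bound on $e_m(T_{r'})$ when $T_{r'}$ contains negative entries. For $B<r'\le d$, the set $T_{r'}$ contains $0$, which contributes nothing to $e_m$. Its nonzero entries have absolute values forming the multiset $\{1,\ldots,r'-B-1\}\cup\{1,\ldots,B-r'+d-1\}$, which has $d-2$ elements. Every monomial satisfies $\prod x_i\le\prod|x_i|$, so $e_m(T_{r'})$ is at most $e_m$ of these absolute values. This multiset is a union of two initial segments, so its $i$-th smallest element is at most $i$. Since $e_m$ is monotone in nonnegative arguments, it is bounded by $e_m(1,\ldots,d-2)\le e_m(1,\ldots,d-1)$. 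This closes the argument for all $0\le r\le d$.
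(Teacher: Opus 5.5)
Your argument is correct. The paper does not reprove this lemma. It cites Beck et al.\ and only sketches their proof as a study of the piecewise-linear interpolation of $r\mapsto g_r(B,\ell)$: positivity and discrete convexity on the initial range, and on the remaining range a pairing of the terms in the definition of $g_r$ so that positive products dominate negative ones.

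Your case $r\le B$ is that convexity argument made explicit. The identity $g_r-g_{r+1}=d\,e_{d-\ell-1}(T_r)$, together with the entrywise monotonicity of $T_r$, shows that the descents shrink toward the middle segment, and $g_B\ge 2\lambda_\ell$ anchors the comparison.

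For $r>B$ you take a genuinely different route. Rather than pairing signed products inside $g_r$, you bound every descent $d\,e_{d-\ell-1}(T_{r'})$ from above by the middle descent $2\lambda_\ell$. This uses $\prod x_i\le\prod|x_i|$ and the observation that the nonzero absolute values in $T_{r'}$ form a union of two initial segments, so they are dominated termwise by $1,\ldots,d-2$. Together with $g_{B+1}=e_{d-\ell}(1,\ldots,d-1)\ge 0$, this range needs only a slope bound, not convexity.

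What your version buys is that both ranges run through a single telescoping identity plus two one-line comparisons of elementary symmetric functions. It also makes transparent why $\lambda_\ell=\tfrac12(g_B-g_{B+1})$ is the right constant. The line $\lambda_\ell(d+1-2r)$ is parallel to the middle segment of the interpolation. Every segment to its left descends at least as steeply, and every segment to its right descends at most as steeply. The inequality $2B+1\ge d$ then places the line below both anchor values $g_B$ and $g_{B+1}$.
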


The proof of \cref{lem:beck} studies the piecewise-linear interpolation of $r\mapsto g_r(B,\ell)$.  On the initial range it uses positivity and a discrete convexity property; on the remaining range it pairs terms in \eqref{eq:g-definition} so that the positive products dominate the negative ones. 

\begin{proposition}\label{prop:dimension-bound}
Let $B=\lfloor d/2\rfloor$.  Then $\A_\ell(P;B)>0$ ($0\leq\ell\leq d$).
Equivalently, every $d$-dimensional lattice polytope is Taylor positive at $\lfloor d/2\rfloor$.
\end{proposition}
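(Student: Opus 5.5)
The plan is to express $\A_\ell(P;B)$ through the elementary-symmetric formula \eqref{eq:formula-elementary} and match it with the quantities $g_r(B,\ell)$ of \cref{lem:beck}. Taking $k=B$,
\[
\A_\ell(P;B)=\frac1{d!}\sum_{r=0}^{d}h_r^*\,e_{d-\ell}(B-r+1,\ldots,B-r+d).
\]
The multiset $\{B-r+\nu:1\leq\nu\leq d\}$ equals $\{B+d-r-a:0\leq a\leq d-1\}$ (substitute $a=d-\nu$). Comparing with \eqref{eq:g-definition}, $e_{d-\ell}(B-r+1,\ldots,B-r+d)=g_r(B,\ell)$, so
\[
d!\,\A_\ell(P;B)=\sum_{r=0}^{d}h_r^*\,g_r(B,\ell).
\]

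For $\ell=d$ we simply have $\A_d(P;B)=c_d>0$, so assume $0\leq\ell<d$. By \cref{lem:beck} and $h_r^*\geq0$,
\[
d!\,\A_\ell(P;B)\geq\lambda_\ell\sum_{r=0}^{d}h_r^*(d+1-2r).
\]
The sum is a standard linear functional of the $h^*$-vector. Since $h_P^*(1)=\sum_r h_r^*$ and $h_P^{*\prime}(1)=\sum_r r h_r^*$, it equals $(d+1)h_P^*(1)-2h_P^{*\prime}(1)$.

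I would relate this quantity to Ehrhart data. The expansion \eqref{eq:hstar-expansion} gives $c_d=h_P^*(1)/d!$. Extracting the $t^{d-1}$ coefficient of $\binom{t+d-r}{d}$ gives $\frac{1}{d!}\sum_{\nu=1}^d(\nu-r)=\frac{1}{d!}\bigl(\tfrac{d(d+1)}2-dr\bigr)$. Hence
\[
c_{d-1}=\frac{d}{2\,d!}\sum_r h_r^*(d+1-2r),
\]
so that
\[
\sum_{r}h_r^*(d+1-2r)=\frac{2\,d!}{d}\,c_{d-1}>0,
\]
because $c_{d-1}$ is half the boundary volume. Combined with $\lambda_\ell>0$, this gives $\A_\ell(P;B)>0$ for every $\ell$, which proves the proposition.

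The main obstacle is the index bookkeeping in identifying $e_{d-\ell}(B-r+1,\ldots,B-r+d)$ with $g_r(B,\ell)$: the subsets $I$ must correspond to choosing $d-\ell$ of the $d$ linear factors. One should also confirm that \cref{lem:beck} covers all $r$ from $0$ to $d$, including $r$ with $d+1-2r<0$. There the bound is a genuine lower bound on a possibly negative $g_r$, and that case is exactly where the lemma's content lies; I take it as given. The remaining steps are the identification of $\sum_r h_r^*(d+1-2r)$ with a positive multiple of $c_{d-1}$, which is routine.
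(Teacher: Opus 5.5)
Your proof is correct and is essentially the paper's argument: you combine $d!\,\A_\ell(P;B)=\sum_r h_r^*g_r(B,\ell)$ with \cref{lem:beck} and the identity $\sum_r h_r^*(d+1-2r)=2(d-1)!\,c_{d-1}>0$. The only difference is that you derive the first identity yourself from the elementary-symmetric formula \eqref{eq:formula-elementary}, and your reindexing $a=d-\nu$ is right. The paper instead quotes the equivalent derivative formula \eqref{eq:derivative-g} from Beck et al.
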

\begin{proof}
Beck et al. \cite[Page 15]{BeckEtAl2005} derived the following formula:
\begin{equation}\label{eq:derivative-g}
 L_P^{(\ell)}(B)=\frac{\ell!}{d!}\sum_{r=0}^{d}h_r^*g_r(B,\ell).
\end{equation}
We next express $c_{d-1}$ in terms of the $h^*$-vector.  The sum of the
constants in $\prod_{\nu=1}^{d}(t+\nu-r)$ is
\[\sum_{\nu=1}^{d}(\nu-r)=\frac{d(d+1)}2-dr=\frac d2(d+1-2r).
\]
Thus the coefficient of $t^{d-1}$ in $\binom{t+d-r}{d}$ equals $\frac{d+1-2r}{2(d-1)!}$.
It follows that
\begin{equation}\label{eq:surface-hstar}
 2(d-1)!c_{d-1}=\sum_{r=0}^{d}h_r^*(d+1-2r)>0,
\end{equation}
where the inequality uses $c_{d-1}>0$.

For $0\leq\ell<d$, choose $\lambda_\ell$ from \cref{lem:beck}.  Since
every $h_r^*$ is nonnegative, \eqref{eq:beck-inequality} and
\eqref{eq:surface-hstar} imply
\begin{align*}
\sum_{r=0}^{d}h_r^*g_r(B,\ell)\geq\lambda_\ell\sum_{r=0}^{d}h_r^*(d+1-2r)=2\lambda_\ell(d-1)!c_{d-1}>0.
\end{align*}
Equation \eqref{eq:derivative-g} now gives $L_P^{(\ell)}(B)>0$ for $0\leq\ell<d$.  For the remaining derivative,
$L_P^{(d)}(B)=d!c_d>0$.  Finally, \eqref{eq:formula-derivative} gives $\A_\ell(P;B)>0$ for every
$0\leq\ell\leq d$.
\end{proof}

Combining \cref{prop:index-gap}, \cref{thm:degree-bound}, and \cref{prop:dimension-bound} gives the following bound.

\begin{corollary}\label{cor:combined-bound}
For every $d$-dimensional lattice polytope of $h^*$-degree $s$, we have 
\[0\leq\tau(P)\leq\tau^+(P)\leq\min\!\left\{\max\{0,s-1\},\left\lfloor\frac d2\right\rfloor\right\}.
\]
\end{corollary}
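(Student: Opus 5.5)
The corollary is a direct assembly of three results already proved. I would establish the chain of inequalities from left to right.

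The inequalities $0\leq\tau(P)\leq\tau^+(P)$ are immediate. Both indices are minima over subsets of $\N$. The set defining $\tau^+(P)$, namely the $k$ with all $\A_j(P;k)>0$, is contained in the set defining $\tau(P)$, namely the $k$ with all $\A_j(P;k)\geq0$. The minimum over the smaller set is therefore at least the minimum over the larger one. This is also the first inequality of \cref{prop:index-gap}. Both sets are nonempty, as noted after \cref{def:positivity-indices}.

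For the upper bound it suffices to show that $\tau^+(P)\leq \max\{0,s-1\}$ and $\tau^+(P)\leq\lfloor d/2\rfloor$ separately.

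\textbf{First bound.} Since $s\geq0$ is an integer, $k_1:=\max\{0,s-1\}$ is a nonnegative integer. \cref{thm:degree-bound} applied at $k=k_1$ gives $\A_j(P;k_1)>0$ for all $0\leq j\leq d$. So $k_1$ lies in the set defining $\tau^+(P)$, and $\tau^+(P)\leq k_1$.

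\textbf{Second bound.} Take $k_2:=B=\lfloor d/2\rfloor$, which is a nonnegative integer. \cref{prop:dimension-bound} says that $\A_\ell(P;B)>0$ for every $\ell$. Hence $\tau^+(P)\leq B$.

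Taking the smaller of the two bounds gives the claim. The only point requiring any care is that both centers are nonnegative integers, so that they are admissible in the definition of $\tau^+$. That is immediate here, so I expect no real obstacle in this step; all the substantive work sits in \cref{thm:degree-bound} and \cref{prop:dimension-bound}, specifically in the Beck et al.\ inequality of \cref{lem:beck}.

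I would remark that \cref{prop:index-gap} is not needed beyond its trivial half. The sharper bound $\lfloor (d-1)/2\rfloor$ announced in \cref{Intorodct-One} would require additional work at the center $\lfloor (d-1)/2\rfloor$ for even $d$. That improvement is deferred to \cref{sec:optimal-universal-shift} and is not part of this corollary.
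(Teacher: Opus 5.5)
Your proposal is correct and follows the paper's argument, which likewise obtains the corollary by combining the trivial half of \cref{prop:index-gap}, \cref{thm:degree-bound} at the integral center $\max\{0,s-1\}$, and \cref{prop:dimension-bound} at $\lfloor d/2\rfloor$. Your point that both centers are nonnegative integers, and hence admissible in the definition of $\tau^+(P)$, is the only check needed, and you handle it correctly.
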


\section{A sharp family of odd-dimensional simplices}
\label{sec:unit-center}

The center $1$ is the first nontrivial integral translation.  We begin with
an elementary low-dimensional result, then construct a family that is simultaneously useful for the sharpness of the universal estimates.

\subsection{Dimensions at most four}
\label{sec:low-dimensional}

The next theorem includes a self-contained proof.  
Its four-dimensional argument is the coefficient computation underlying \cite[Proposition~4.7]{BeckEtAl2005}.

\begin{theorem}\label{thm:dimension-four}
If $P$ is a lattice polytope of dimension $d\leq4$, then $P$ is Taylor
positive at $1$.
\end{theorem}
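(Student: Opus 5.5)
The plan is to split by dimension. Dimensions $d\leq3$ reduce to results already proved; in dimension $4$ I would compute the two possibly negative coefficients at the center $1$ explicitly in the $h^*$-basis and then bound them with a classical linear inequality on the $h^*$-vector.

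\textbf{Case $d\leq 3$.} Here \cref{prop:dimension-bound} shows that $P$ is Taylor positive at $B=\lfloor d/2\rfloor$.
\begin{itemize}
\item For $d\in\{2,3\}$ we have $B=1$, so we are done.
\item For $d=1$ we have $B=0$: every coefficient at $0$ is positive, so by \cref{cor:positivity-persistence} every coefficient at $1$ is positive as well.
\end{itemize}

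\textbf{Case $d=4$.} By \cref{prop:automatic-coefficients}, only $\A_1(P;1)$ and $\A_2(P;1)$ need checking. I would use the elementary-symmetric formula \eqref{eq:formula-elementary} with $k=1$:
\[
\A_j(P;1)=\frac1{24}\sum_{r=0}^{4}h_r^*\,e_{4-j}(2-r,3-r,4-r,5-r).
\]
The needed values are short computations:
\begin{itemize}
\item For $e_2$, the values at $r=0,\dots,4$ are $71,35,11,-1,-1$.
\item For $e_3$, the values at $r=0,\dots,4$ are $154,50,6,-2,2$.
\end{itemize}
Using $h_0^*=1$, this gives
\[
24\A_2(P;1)=71+35h_1^*+11h_2^*-h_3^*-h_4^*,\qquad
24\A_1(P;1)=154+50h_1^*+6h_2^*-2h_3^*+2h_4^*.
\]

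\textbf{Main step: controlling the negative terms.} The only negative contributions come from $h_3^*$ and $h_4^*$. I would invoke Stanley's inequality
\[
h_{d}^*+h_{d-1}^*+\cdots+h_{d-i}^*\leq h_0^*+h_1^*+\cdots+h_{i+1}^*,
\]
taken with $d=4$ and $i=1$. This gives $h_3^*+h_4^*\leq 1+h_1^*+h_2^*$, and in particular $h_3^*\leq 1+h_1^*+h_2^*$.
\begin{itemize}
\item Substituting into $\A_2$ gives $24\A_2(P;1)\geq 70+34h_1^*+10h_2^*>0$.
\item Substituting into $\A_1$, and dropping the nonnegative term $2h_4^*$, gives $24\A_1(P;1)\geq 152+48h_1^*+4h_2^*>0$.
\end{itemize}
Together with \cref{prop:automatic-coefficients}, this yields Taylor positivity at $1$.

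The step I expect to be the obstacle is exactly this control of $h_3^*$ and $h_4^*$. The paper has not yet stated such an $h^*$ inequality, so it must be cited; Stanley's inequality is the lightest tool that suffices. A fallback would be the pair $h_4^*\leq h_1^*$ (interior lattice points are among the non-vertex lattice points) together with $h_0^*+\cdots+h_{i+1}^*\geq h_{d}^*+\cdots+h_{d-i}^*$ (Hibi's inequalities). The coefficient computations themselves are routine and should be double-checked numerically on the standard simplex, where $h^*=(1,0,0,0,0)$ and $L(t)=\binom{t+4}{4}$.
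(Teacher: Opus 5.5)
Your proof is correct, but it takes a different route from the paper's.

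The paper stays in the monomial basis and is self-contained. For $d\le 3$ it expands $L_P(1+x)$ directly, using only $c_d,c_{d-1}>0$ and, for $d=3$, $L_P(1)\geq 4$. For $d=4$ it adds reciprocity $L_P(-1)\geq 0$; combined with $L_P(1)\geq 5$ this gives $c_2\geq\frac32-c_4$, and both middle coefficients at $1$ are then visibly positive.

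You differ in two places:
\begin{enumerate}
\item For $d\le3$ you delegate to \cref{prop:dimension-bound}. This is legitimate, since it is proved earlier, but it imports the Beck--De~Loera--Develin--Pfeifle--Stanley lemma, which is much heavier than the few lines actually needed.
\item For $d=4$ you work in the $h^*$-basis. Your values of $e_2$ and $e_3$, and the resulting formulas for $24\A_2(P;1)$ and $24\A_1(P;1)$, are right. This form has a real merit: it shows that negativity at center $1$ can only come from $h_3^*$ and $h_4^*$, in line with \cref{thm:degree-bound}, and it yields explicit lower bounds.
\end{enumerate}

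The step you flag as the obstacle needs no outside citation, and your claim that the paper has not yet stated a suitable $h^*$-inequality is inaccurate. \eqref{eq:surface-hstar}, i.e.\ $c_3>0$, is already proved in the paper. For $d=4$ it reads $h_3^*+3h_4^*<5+3h_1^*+h_2^*$. Substituting it gives $24\A_2(P;1)>66+32h_1^*+10h_2^*+2h_4^*$ and $24\A_1(P;1)>144+44h_1^*+4h_2^*+8h_4^*$.

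If you do cite $h_3^*+h_4^*\leq h_0^*+h_1^*+h_2^*$, note that this inequality is usually attributed to Hibi; Stanley's inequality is the different statement $h_0^*+\cdots+h_i^*\leq h_s^*+\cdots+h_{s-i}^*$. Your ``fallback'' invokes exactly the same inequality under Hibi's name, so it is not an independent alternative.
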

\begin{proof}
The assertion is immediate in dimension one: a lattice segment has
$L_P(t)=at+1$ with $a>0$, and $L_P(1+x)=(a+1)+ax$.
In dimension two, write
$L_P(t)=at^2+bt+1$.  Here $a>0$, and $b$ is one half of the
normalized boundary length, so $b>0$.  Hence
\[L_P(1+x)=a x^2+(2a+b)x+(a+b+1),
\]
whose coefficients are all positive.
In dimension three, write $L_P(t)=pt^3+qt^2+rt+1$.
The two highest coefficients satisfy $p>0$ and $q>0$.  A full-dimensional lattice $3$-polytope has at least four lattice points, namely at least four affinely independent vertices.  
Therefore
\[ L_P(1)=p+q+r+1\geq4,\qquad\text{so}\qquad r\geq3-p-q.
\]
Expanding at $1$ gives
\[L_P(1+x)=p x^3+(3p+q)x^2+(3p+2q+r)x+L_P(1).
\]
The only coefficient whose positivity is not immediate satisfies
\[ 3p+2q+r\geq 3p+2q+(3-p-q)=2p+q+3>0.
\]
Finally, let $d=4$ and write $L_P(t)=pt^4+qt^3+rt^2+st+1$, where $p>0$ and $q>0$.  Since $P$ has at least five lattice points,
\begin{equation}\label{eq:d4-one}
p+q+r+s+1=L_P(1)\geq5, \qquad\text{hence}\qquad p+q+r+s\geq4.
\end{equation}
By \eqref{thm:reciprocity}, we have 
\begin{equation}\label{eq:d4-minus-one}
 p-q+r-s+1=L_P(-1)=|\relint(P)\cap\Z^4|\geq0.
\end{equation}
Adding \eqref{eq:d4-one} and \eqref{eq:d4-minus-one} gives $2p+2r+1\geq4$, so $r\geq\frac32-p$.
Now
\begin{align*}
 L_P(1+x)=p x^4+(4p+q)x^3+(6p+3q+r)x^2+(4p+3q+2r+s)x+L_P(1).
\end{align*}
The quadratic coefficient satisfies
\[6p+3q+r \geq 6p+3q+\frac32-p >0.
\]
For the linear coefficient, split it into two expressions already bounded:
\begin{align*}
4p+3q+2r+s=(p+q+r+s)+(3p+2q+r)\geq4+3p+2q+\left(\frac32-p\right)>0.
\end{align*}
The leading, cubic, and constant coefficients are visibly positive.  This completes all four dimensions.
\end{proof}

\subsection{Generalized Reeve simplices}
\label{sec:sharp-family}

We now introduce a special class of simplices which is studied in \cite{LiuTaoXinSign}.  
It belongs to the class of empty odd-dimensional simplices described by the generalized
White theorem \cite[Theorem~1.10]{BatyrevHofscheier2021}.

Fix integers $s\geq2$ and $q\geq1$, and set $d=2s-1$.  Let
\[ v_{s,q}=(1,q-1,1,q-1,\ldots,1,q-1,q)\in\Z^{2s-1},
\]
where $(1,q-1)$ occurs $s-1$ times.  Define
\begin{equation*}
 \Delta_{s,q} =\conv(0,e_1,\ldots,e_{2s-2},v_{s,q})\subset\R^{2s-1}.
\end{equation*}
The determinant of the matrix with columns $e_1,\ldots,e_{2s-2},v_{s,q}$ is $q$.  Thus $\Delta_{s,q}$ is
full-dimensional and has normalized volume $q$; equivalently, its Ehrhart leading coefficient is $q/(2s-1)!$.

\begin{proposition}{\em (Liu--Tao--Xin, \cite{LiuTaoXinSign})}\label{prop:family-hstar}
The $h^*$-polynomial of $\Delta_{s,q}$ is
\begin{equation*}
  h_{\Delta_{s,q}}^*(z)=1+(q-1)z^s.
\end{equation*}
Consequently,
\begin{equation}\label{eq:family-ehrhart}
 L_{\Delta_{s,q}}(t)
 =\binom{t+2s-1}{2s-1}
 +(q-1)\binom{t+s-1}{2s-1}.
\end{equation}
\end{proposition}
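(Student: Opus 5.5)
The plan is to compute $h^*$ directly from the lattice-point structure of the fundamental parallelepiped, and then read off the Ehrhart polynomial from \eqref{eq:hstar-expansion}.

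\emph{Setting up the parallelepiped.} For a lattice simplex with vertices $0,e_1,\ldots,e_{2s-2},v_{s,q}$, homogenize the vertices to $w_0=(0,1)$, $w_i=(e_i,1)$ for $1\le i\le 2s-2$, and $w_{2s-1}=(v_{s,q},1)$ in $\R^{2s}$. Then $h_r^*$ equals the number of lattice points in the half-open parallelepiped $\Pi=\{\sum_i\lambda_iw_i:0\le\lambda_i<1\}$ whose last coordinate is $r$. The normalized volume is $q$, so $|\Pi\cap\Z^{2s}|=q$. The origin contributes $h_0^*=1$. It therefore suffices to show that the remaining $q-1$ points all have height exactly $s$.

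\emph{Parametrizing the points of $\Pi$.} The last spatial coordinate of a point $\sum_i\lambda_iw_i$ is $q\lambda_{2s-1}$, since only $v_{s,q}$ has a nonzero last spatial entry. So a lattice point forces $\lambda_{2s-1}=m/q$ with $m\in\{0,\ldots,q-1\}$. Given $m$, coordinate $i\le 2s-2$ reads $\lambda_i+(v_{s,q})_i\,m/q\in\Z$, which determines $\lambda_i$ uniquely as a fractional part:
\begin{itemize}
\item for odd $i$, $(v)_i=1$ and $\lambda_i=\{-m/q\}=1-m/q$ when $m\ge1$;
\item for even $i$, $(v)_i=q-1$ and $\lambda_i=\{-(q-1)m/q\}=\{m/q\}=m/q$.
\end{itemize}
Finally $\lambda_0$ is forced by integrality of the height $\sum_i\lambda_i$. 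This gives exactly one point for each $m$, consistent with the count $q$.

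\emph{Computing the height.} For $m\ge1$ the sum is
\[
(s-1)\Bigl(1-\tfrac mq\Bigr)+(s-1)\tfrac mq+\tfrac mq=(s-1)+\tfrac mq .
\]
Hence $\lambda_0=1-m/q$ and the height is $s$. This yields $h^*(z)=1+(q-1)z^s$.

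\emph{Ehrhart polynomial.} Substituting into \eqref{eq:hstar-expansion} with $d=2s-1$ gives \eqref{eq:family-ehrhart}, since $\binom{t+d-s}{d}=\binom{t+s-1}{2s-1}$.

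The main obstacle is bookkeeping the fractional parts correctly, in particular the even-index entries $q-1\equiv-1\pmod q$. Because of this congruence, the odd and even coordinates pair up to give exactly $1$ each. That pairing is what makes all nonzero points sit at the same height $s$.
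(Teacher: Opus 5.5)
Your proof is correct and complete. The enumeration gives exactly one lattice point of $\Pi$ for each $m\in\{0,\ldots,q-1\}$, so the appeal to the normalized volume is only a consistency check. For $m\geq1$, each odd/even coordinate pair contributes $(1-m/q)+m/q=1$, and $\lambda_{2s-1}+\lambda_0=1$, which forces every nonzero point to height exactly $s$.

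The paper gives no proof of this proposition: it simply cites Liu--Tao--Xin and remarks that $\Delta_{s,q}$ belongs to the Batyrev--Hofscheier generalized White class. Your fundamental-parallelepiped computation is therefore a valid self-contained verification. The passage to the Ehrhart polynomial via \eqref{eq:hstar-expansion} with $d=2s-1$ is exactly how the paper uses the result.
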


The next theorem proves optimality of the bound in \cref{cor:combined-bound}.

\begin{theorem}\label{thm:sharp-center}
Let $s\geq2$ and set $H_0=0$ and $H_m=\sum_{a=1}^{m}a^{-1}$ for $m\geq1$.
If
\begin{equation}\label{eq:q-large-condition}
 q>1+(2s-2)(2s-1)\binom{3s-3}{2s-1}\bigl(H_{3s-3}-H_{s-2}\bigr),
\end{equation}
then $\tau(\Delta_{s,q})=\tau^+(\Delta_{s,q})=s-1$.
\end{theorem}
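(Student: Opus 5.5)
The plan is to obtain the upper bound from \cref{thm:degree-bound} and the lower bound by exhibiting one negative Taylor coefficient at the center $k=s-2$; the natural candidate is the linear coefficient $\A_1$.

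\emph{Upper bound.} By \cref{prop:family-hstar}, $\deg h^*_{\Delta_{s,q}}=s$ as soon as $q\geq2$, which the hypothesis \eqref{eq:q-large-condition} guarantees. \cref{thm:degree-bound} then gives Taylor positivity at $k=s-1$. Hence
\[\tau(\Delta_{s,q})\leq\tau^+(\Delta_{s,q})\leq s-1.\]

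\emph{Lower bound: reduction.} I will show that $\A_1(\Delta_{s,q};s-2)<0$. This suffices for the following reason. Suppose $\tau(\Delta_{s,q})\leq s-2$. Then \cref{cor:positivity-persistence} would give nonnegativity of all coefficients at every center $\geq\tau$, in particular at $s-2$, a contradiction. So $\tau\geq s-1$, and together with $\tau\leq\tau^+\leq s-1$ this forces equality. The case $s=2$ is included: there the center is $k=0$, and the claim says $\Delta_{2,q}$ is not Ehrhart positive.

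\emph{Computing $\A_1(\Delta_{s,q};s-2)$.} Use \eqref{eq:formula-elementary}, or equivalently the factorization \eqref{eq:factorized-shift}, with $d=2s-1$, $k=s-2$, and only $r=0$ and $r=s$ contributing.

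For $r=0$, the constants $k+\nu$ for $\nu=1,\ldots,2s-1$ are $s-1,s,\ldots,3s-3$, all positive. Their $e_{2s-2}$ equals the full product times the sum of reciprocals, namely
\[\frac{(3s-3)!}{(s-2)!}\,\bigl(H_{3s-3}-H_{s-2}\bigr).\]
After dividing by $(2s-1)!$, this term contributes
\[\binom{3s-3}{2s-1}\bigl(H_{3s-3}-H_{s-2}\bigr).\]

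For $r=s$, the constants $\nu-2$ are $-1,0,1,\ldots,2s-3$. Because a zero constant is present, the polynomial $\prod_\nu(x+\nu-2)$ equals $x(x-1)\prod_{m=1}^{2s-3}(x+m)$. Its coefficient of $x$ is therefore the constant term of $(x-1)\prod_{m=1}^{2s-3}(x+m)$, which is $-(2s-3)!$. With weight $q-1$ and the factor $1/(2s-1)!$, this term contributes
\[-\frac{q-1}{(2s-1)(2s-2)}.\]

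Adding the two contributions,
\[\A_1(\Delta_{s,q};s-2)=\binom{3s-3}{2s-1}\bigl(H_{3s-3}-H_{s-2}\bigr)-\frac{q-1}{(2s-1)(2s-2)},\]
which is negative exactly when \eqref{eq:q-large-condition} holds.

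\emph{Main obstacle.} The only delicate point is the bookkeeping in the $e_{2s-2}$ computation, in particular recognizing that the zero constant kills every degree-one term except one. I would double-check the $s=2$ case directly from \eqref{eq:family-ehrhart} as a sanity test.
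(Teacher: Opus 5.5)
Your proposal is correct and is essentially the paper's proof: the upper bound comes from \cref{thm:degree-bound} at $k=s-1$, and the lower bound comes from the same closed form $\A_1(\Delta_{s,q};s-2)=\binom{3s-3}{2s-1}\bigl(H_{3s-3}-H_{s-2}\bigr)-\frac{q-1}{(2s-1)(2s-2)}$ combined with \cref{cor:positivity-persistence}; the only difference is that you read off the two contributions from the elementary-symmetric formula \eqref{eq:formula-elementary}, whereas the paper takes the linear coefficients of $\binom{x+3s-3}{2s-1}$ and $\binom{x+2s-3}{2s-1}$ directly. You also note that the hypothesis forces $q\geq2$, which is needed for $\deg h^*_{\Delta_{s,q}}=s$ to hold; the paper leaves this implicit.
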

\begin{proof}
By \cref{prop:family-hstar}, the $h^*$-degree of $\Delta_{s,q}$ is $s$.
Hence \cref{thm:degree-bound} shows that every shifted coefficient at $k=s-1$ is positive.  It remains to show failure at $k=s-2$.

Write $t=(s-2)+x$.  The first summand of \eqref{eq:family-ehrhart} becomes $F_s(x)=\binom{x+3s-3}{2s-1}$.
Its linear coefficient is its derivative at zero.  
From $F_s(x)=\frac1{(2s-1)!}\prod_{a=s-1}^{3s-3}(x+a)$, 
the logarithmic derivative, or a direct product-rule computation, gives
\begin{align*}
[x]F_s(x)=F_s(0)\sum_{a=s-1}^{3s-3}\frac1a =\binom{3s-3}{2s-1}\bigl(H_{3s-3}-H_{s-2}\bigr).
\end{align*}
The second summand without its factor $q-1$ becomes
\begin{align*}
 G_s(x)=\binom{x+2s-3}{2s-1}=\frac{(x-1)x(x+1)(x+2)\cdots(x+2s-3)}{(2s-1)!}.
\end{align*}
Thus we have 
\begin{align*}
 [x]G_s(x)=\frac{(-1)\cdot1\cdot2\cdots(2s-3)}{(2s-1)!}=-\frac{1}{(2s-2)(2s-1)}.
\end{align*}
Therefore
\begin{equation}\label{eq:sharp-linear-coefficient}
 \A_1(\Delta_{s,q};s-2)=\binom{3s-3}{2s-1}\bigl(H_{3s-3}-H_{s-2}\bigr)-\frac{q-1}{(2s-2)(2s-1)}.
\end{equation}
Condition \eqref{eq:q-large-condition} makes this quantity negative.
Thus the simplex is not Taylor nonnegative at $s-2$.

If it had been Taylor nonnegative at some earlier nonnegative integer
$k\leq s-3$, \cref{cor:positivity-persistence} would make it Taylor
positive at $s-2$, contradicting
\eqref{eq:sharp-linear-coefficient}.  Hence neither $\tau(\Delta_{s,q})$
nor $\tau^+(\Delta_{s,q})$ is smaller
than $s-1$, while \cref{thm:degree-bound} shows that both are at most
$s-1$. This completes the proof.
\end{proof}

The preceding simplices also produce sharp examples in every larger
dimension.  We give the construction and the coefficient calculation in
full.

If $P\subset\R^m$ is a lattice polytope, its \emph{standard lattice pyramid} is
\[\operatorname{pyr}(P)=\conv\bigl((P,0),e_{m+1}\bigr)\subset\R^{m+1}.\]
For $a\in\N$, the notation $\operatorname{pyr}^{a}(P)$ means the result of applying this construction $a$ times, with
$\operatorname{pyr}^{0}(P)=P$.
For every lattice polytope $P$, the $h^*$-polynomial of $\operatorname{pyr}(P)$ (see \cite[Theorem 2.4]{BeckRobins2015}) is given by
$$h_{\operatorname{pyr}(P)}^*(z)=h_P^*(z).$$

For $D\geq2s-1$, define
\begin{equation*}
 \Delta_{s,q}^{(D)} :=\operatorname{pyr}^{D-(2s-1)}(\Delta_{s,q}).
\end{equation*}
\begin{theorem}\label{thm:pyramid-sharpness}
Let $s\geq2$, $D\geq2s-1$, and $q\geq2$.  Then we have $h_{\Delta_{s,q}^{(D)}}^*(z)=1+(q-1)z^s$ and 
\begin{align}
L_{\Delta_{s,q}^{(D)}}(t)&=\binom{t+D}{D}+(q-1)\binom{t+D-s}{D}.\label{eq:pyramid-ehrhart}
\end{align}
Set
\begin{equation*}
 C_{D,s}:= \binom{D+s-2}{D}\bigl(H_{D+s-2}-H_{s-2}\bigr).
\end{equation*}
If $q>1+D(D-1)C_{D,s}$, then we have $\tau(\Delta_{s,q}^{(D)})=\tau^+(\Delta_{s,q}^{(D)})=s-1$.
\end{theorem}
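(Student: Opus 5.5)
The plan is to mirror the proof of \cref{thm:sharp-center}, with the dimension $2s-1$ replaced by $D$.

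\textbf{Step 1: the $h^*$-polynomial and \eqref{eq:pyramid-ehrhart}.} Applying the pyramid invariance $h^*_{\operatorname{pyr}(P)}=h^*_P$ repeatedly, $D-(2s-1)$ times, to \cref{prop:family-hstar} gives
\[
h^*_{\Delta_{s,q}^{(D)}}(z)=1+(q-1)z^s.
\]
Substituting into \eqref{eq:hstar-expansion} in dimension $D$ (terms $r=0$ and $r=s$) yields \eqref{eq:pyramid-ehrhart}.

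\textbf{Step 2: the upper bound.} The $h^*$-degree is exactly $s$, since $q\geq2$. Hence \cref{thm:degree-bound} makes $\Delta_{s,q}^{(D)}$ Taylor positive at $k=s-1$, so $\tau\leq\tau^+\leq s-1$.

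\textbf{Step 3: failure at $k=s-2$.} Here $s\geq2$, so $k=s-2\geq0$. Write $t=(s-2)+x$ and compute the linear Taylor coefficient of each summand.

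The first summand is
\[
F(x)=\binom{x+D+s-2}{D}=\frac1{D!}\prod_{a=s-1}^{D+s-2}(x+a).
\]
All factors are positive at $x=0$, so logarithmic differentiation gives
\[
[x]F=\binom{D+s-2}{D}\sum_{a=s-1}^{D+s-2}\frac1a=C_{D,s}.
\]

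The second summand (without the factor $q-1$) is
\[
G(x)=\binom{x+D-2}{D}=\frac1{D!}\prod_{a=-1}^{D-2}(x+a).
\]
It contains the factor $x$ (from $a=0$), so its linear coefficient is the product of the remaining constants divided by $D!$:
\[
[x]G=\frac{(-1)\cdot(D-2)!}{D!}=-\frac1{D(D-1)}.
\]
This uses $D\geq 2s-1\geq3$, so $D-2\geq1$.

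Therefore
\[
\A_1\bigl(\Delta_{s,q}^{(D)};s-2\bigr)=C_{D,s}-\frac{q-1}{D(D-1)},
\]
which is negative exactly when $q>1+D(D-1)C_{D,s}$.

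\textbf{Step 4: conclusion.} As in \cref{thm:sharp-center}, Taylor nonnegativity at any integer $k\leq s-3$ would, by \cref{cor:positivity-persistence}, force positivity at $s-2$, contradicting Step 3. Hence $\tau\geq s-1$. Combined with Step 2, $\tau=\tau^+=s-1$.

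The only delicate point is the bookkeeping in Step 3. One must check that the factor ranges shift correctly, namely $\binom{t+D-s}{D}$ at $t=s-2+x$ gives factors $x-1,\dots,x+D-2$. One must also confirm that $s-2\geq0$ places the first product's factors in the range $s-1,\dots,D+s-2$, so that the harmonic difference $H_{D+s-2}-H_{s-2}$ appears, with $H_0=0$ covering the case $s=2$.
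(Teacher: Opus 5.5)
Your proposal is correct and follows the paper's proof essentially step for step. Like the paper, you use pyramid invariance of $h^*$ to get \eqref{eq:pyramid-ehrhart} and \cref{thm:degree-bound} for positivity at $s-1$. You then compute the linear coefficient $C_{D,s}-\frac{q-1}{D(D-1)}$ at $s-2$ and invoke \cref{cor:positivity-persistence} to exclude all smaller integer centers.
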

\begin{proof}
Repeated application of $h_{\operatorname{pyr}(P)}^*(z)=h_P^*(z)$, followed by
\cref{prop:family-hstar}, proves $h_{\Delta_{s,q}^{(D)}}^*(z)=1+(q-1)z^s$.  Since the
dimension is $D$, substituting this $h^*$-polynomial into
\eqref{eq:hstar-expansion} gives \eqref{eq:pyramid-ehrhart}.
We calculate the linear shifted coefficient at $k=s-2$.  Write $t=s-2+x$.  The first term on the right-hand side of
\eqref{eq:pyramid-ehrhart} is
\[\binom{x+D+s-2}{D}=\frac1{D!}\prod_{a=s-1}^{D+s-2}(x+a).
\]
All constants in this product are positive.  Differentiating at zero by the product rule yields
\begin{align*}
 [x]\binom{x+D+s-2}{D}&=\frac1{D!}\sum_{b=s-1}^{D+s-2}\prod_{\substack{a=s-1\\a\ne b}}^{D+s-2}a
 =\frac1{D!}\left(\prod_{a=s-1}^{D+s-2}a\right)\sum_{b=s-1}^{D+s-2}\frac1b\\
 &=\binom{D+s-2}{D}\bigl(H_{D+s-2}-H_{s-2}\bigr)=C_{D,s}.
\end{align*}
We also have 
\begin{align*}
[x]\binom{x+D-2}{D}=\frac{(-1)\,1\cdot2\cdots(D-2)}{D!}=-\frac1{D(D-1)}.
\end{align*}
Combining the two calculations gives the exact identity
\begin{equation}\label{eq:pyramid-linear}
 \A_1(\Delta_{s,q}^{(D)};s-2)=C_{D,s}-\frac{q-1}{D(D-1)}.
\end{equation}
Under $q>1+D(D-1)C_{D,s}$, this coefficient is negative.

On the other hand, $h_{\Delta_{s,q}^{(D)}}^*(z)=1+(q-1)z^s$ has degree $s$, so \cref{thm:degree-bound} gives positivity at $k=s-1$. 
The negative coefficient in \eqref{eq:pyramid-linear} rules out $k=s-2$. If Taylor nonnegativity held at an integer $k\leq s-3$, then \cref{cor:positivity-persistence} would imply Taylor positivity at $s-2$, again a contradiction.
Thus neither $\tau$ nor $\tau^+$ is smaller than $s-1$, and positivity there proves that neither is larger.
\end{proof}

\begin{remark}\label{rem:real-sharpness}
The equality in \cref{thm:sharp-center,thm:pyramid-sharpness} concern the integral invariants in \cref{def:positivity-indices}.  Under the corresponding large-$q$ hypothesis, the least real center in \eqref{eq:real-positivity-ray} instead satisfies
\[ s-2<\rho(\Delta_{s,q}^{(D)})<s-1.
\]
The first inequality follows from the negative coefficient at $s-2$.  At $s-1$ every coefficient is positive, so continuity gives positivity throughout a sufficiently small interval immediately to the left of $s-1$, proving the second inequality.  Thus an integer sharpness statement must not be read as an equality for real expansion centers.
\end{remark}

The theorems have the following immediate consequences.

\begin{corollary}\label{cor:dimension-sharp}
For every odd $d\geq3$, there is a $d$-dimensional lattice simplex for which $\tau(P)=\tau^+(P)=\lfloor d/2\rfloor$.  Hence the universal dimension bound in \cref{prop:dimension-bound} is sharp in every odd dimension.
\end{corollary}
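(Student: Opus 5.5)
The plan is to take $d=2s-1$ with $s=(d+1)/2\geq2$, which is possible exactly because $d\geq3$ is odd. For this $s$, \cref{thm:sharp-center} supplies the lattice simplex $\Delta_{s,q}$ once $q$ is an integer large enough to satisfy \eqref{eq:q-large-condition}. The right-hand side of \eqref{eq:q-large-condition} is a finite real number depending only on $s$, so such an integer $q$ exists; for instance, one more than its ceiling works.

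The theorem then gives $\tau(\Delta_{s,q})=\tau^+(\Delta_{s,q})=s-1$. Since $d$ is odd, $s-1=(d-1)/2=\lfloor d/2\rfloor$, and $\Delta_{s,q}\subset\R^{2s-1}=\R^d$ is a full-dimensional lattice simplex. This proves the first sentence.

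For the sharpness claim, \cref{prop:dimension-bound} shows that every $d$-polytope is Taylor positive at $\lfloor d/2\rfloor$, so $\tau^+(P)\leq\lfloor d/2\rfloor$ for all $P$. The simplex above attains $\tau(P)=\lfloor d/2\rfloor$. Because of \cref{cor:positivity-persistence}, no smaller integer center can work universally, even for nonnegativity. Hence the bound cannot be lowered in odd dimensions.

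There is no real obstacle here. The only points needing care are checking that $s\geq2$ is guaranteed by $d\geq3$, and noting that an integral $q$ satisfying the strict inequality exists.
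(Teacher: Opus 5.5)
Your proposal is correct and matches the paper's proof: write $d=2s-1$ with $s\geq2$, choose an integer $q$ satisfying \eqref{eq:q-large-condition}, and apply \cref{thm:sharp-center} to get $\tau(\Delta_{s,q})=\tau^+(\Delta_{s,q})=s-1=\lfloor d/2\rfloor$. You do not need the appeal to \cref{cor:positivity-persistence} in the sharpness step: $\tau(\Delta_{s,q})=s-1$ already means, by the definition of $\tau$ as a minimum, that the simplex fails Taylor nonnegativity at every smaller nonnegative integer.
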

\begin{proof}
Write $d=2s-1$ and choose $q$ satisfying
\eqref{eq:q-large-condition}.  Then \cref{thm:sharp-center} gives $\tau(\Delta_{s,q})=\tau^+(\Delta_{s,q})=s-1=\lfloor\frac d2\rfloor$.
\end{proof}

\begin{corollary}\label{cor:fixed-k}
Fix $k\in\N$. The following result holds.
\begin{enumerate}
 \item[(a)] Every lattice polytope with $\deg h_P^*\leq k+1$ is Taylor positive
 at $k$.
 \item[(b)] There is a $(2k+3)$-dimensional lattice simplex with
 $\deg h_P^*=k+2$ which is not Taylor nonnegative at $k$.
\end{enumerate}
Consequently, no fixed nonnegative integer shift works for lattice polytopes in all dimensions.
\end{corollary}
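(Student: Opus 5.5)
Part (a) follows directly from \cref{thm:degree-bound}. If $s=\deg h_P^*\leq k+1$, then $s-1\leq k$. Since $k\geq0$, we get $k\geq\max\{0,s-1\}$, so $P$ is Taylor positive at $k$. No further work is needed here; the only point to check is that the real-center version of \cref{thm:degree-bound} applies verbatim to integral $k$.

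For part (b), I would use the generalized Reeve simplices of \cref{sec:sharp-family} with $s=k+2\geq2$. Then $d=2s-1=2k+3$, which is the required dimension. By \cref{prop:family-hstar}, $h^*_{\Delta_{s,q}}(z)=1+(q-1)z^{s}$. Choosing $q\geq2$, this has degree exactly $s=k+2$. I would then take $q$ large enough to satisfy \eqref{eq:q-large-condition}. With this choice, the identity \eqref{eq:sharp-linear-coefficient} shows $\A_1(\Delta_{s,q};s-2)<0$, and $s-2=k$. Hence $\Delta_{s,q}$ is not Taylor nonnegative at $k$. Equivalently, one can cite \cref{thm:sharp-center}: it gives $\tau(\Delta_{s,q})=s-1=k+1>k$, and by \cref{cor:positivity-persistence} nonnegativity at $k$ would force $\tau\leq k$. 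The only mild care needed is the edge case $k=0$, i.e.\ $s=2$. There $H_{s-2}=H_0=0$ is defined by convention, so the formula still makes sense and produces a $3$-dimensional Reeve-type tetrahedron with negative linear Ehrhart coefficient.

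For the final consequence, suppose some fixed $k\in\N$ made every lattice polytope of every dimension Taylor nonnegative. Part (b) produces a $(2k+3)$-dimensional simplex that fails at $k$, a contradiction. Nothing here is a genuine obstacle. The step requiring the most attention is verifying that the degree of the $h^*$-polynomial is exactly $k+2$, which needs $q\neq1$; this is guaranteed because \eqref{eq:q-large-condition} forces $q>1$.
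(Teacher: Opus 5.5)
Your proposal is correct and follows the paper's argument: part (a) is read off from \cref{thm:degree-bound}, and part (b) takes $\Delta_{k+2,q}$ with $q$ satisfying \eqref{eq:q-large-condition}, so that \eqref{eq:sharp-linear-coefficient} is negative at the center $k=s-2$ in dimension $2s-1=2k+3$. Your extra checks are details the paper leaves implicit: the large-$q$ condition forces $q\geq2$, so $\deg h^*_P=k+2$ exactly, and the convention $H_0=0$ covers the case $k=0$.
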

\begin{proof}
Part (a) is \cref{thm:degree-bound}, because $\deg h_P^*-1\leq k$.  For part (b), take $s=k+2$ and choose $q$ as in
\eqref{eq:q-large-condition}.  Then $k=s-2$, and \eqref{eq:sharp-linear-coefficient} is negative.  The dimension of the
simplex is $2s-1=2k+3$.
\end{proof}

\subsection{From center zero to center one}
\label{sec:shift-one}

At $k=0$, \eqref{eq:formula-monomial} gives
$\A_j(P;0)=c_j$.  At $k=1$, it gives the finite Pascal transform
\begin{equation}\label{eq:pascal-transform}
  \A_j(P;1)=\sum_{m=j}^{d}\binom{m}{j}c_m.
\end{equation}
Thus a negative ordinary Ehrhart coefficient may be outweighed by higher
coefficients, but \eqref{eq:pascal-transform} does not preserve positivity
when some input coefficients are negative.

The degree classification begins as follows.

\begin{proposition}\label{prop:degree-one-two}
Let $s=\deg h_P^*$. The following result holds.
\begin{enumerate}
 \item[(a)] If $s\leq1$, then $P$ is Ehrhart positive.
 \item[(b)] If $s=2$, then $P$ is Taylor positive at $1$, although it need not be
 Ehrhart positive.
 \item[(c)] Already among polytopes with $s=3$, Taylor positivity at $1$ is not
 determined by the $h^*$-degree alone.
\end{enumerate}
\end{proposition}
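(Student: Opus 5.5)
Part (a) is the case $k=0$ of \cref{thm:degree-bound}: if $s\leq1$ then $\max\{0,s-1\}=0$, so $P$ is Taylor positive at $0$, which is Ehrhart positivity. To make this concrete, I would also write
\[
L_P(t)=\binom{t+d}{d}+h_1^*\binom{t+d-1}{d},
\]
and observe that both products $\prod_{\nu=1}^d(t+\nu)$ and $\prod_{\nu=0}^{d-1}(t+\nu)$ have nonnegative coefficients, while the first has all coefficients positive.

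For the positivity claim in (b), I would again apply \cref{thm:degree-bound}, now with $s=2$ and $k=1\geq s-1$, which gives $\A_j(P;1)>0$ for all $j$. For the claim that $P$ need not be Ehrhart positive, I need a polytope with $h^*$-degree $2$ and a negative middle coefficient. The classical Reeve tetrahedra do this. Take $R_m=\conv(0,e_1,e_2,(1,1,m))$. Its $h^*$-polynomial is $1+(m-1)z^2$, so $s=2$, and
\[
L_{R_m}(t)=\tfrac m6t^3+t^2+\left(2-\tfrac m6\right)t+1.
\]
The linear coefficient is negative once $m\geq13$. Equivalently, one can use the family $\Delta_{2,q}$ with $s=2$ and compute its linear coefficient at $0$ from \eqref{eq:family-ehrhart}. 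This gives $\binom{3}{3}H_3-\frac{q-1}{6}$, which is negative for $q$ large, and this agrees with \eqref{eq:sharp-linear-coefficient} at $k=s-2=0$.

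For (c), I need two polytopes with $s=3$, one Taylor positive at $1$ and one not. For the negative example I would use \cref{thm:sharp-center} (or \cref{thm:pyramid-sharpness}) with $s=3$ and $q$ large. This gives a $5$-dimensional simplex $\Delta_{3,q}$ with $h^*$-degree $3$ and $\tau=2$. Concretely, \eqref{eq:sharp-linear-coefficient} at $k=1$ is
\[
\binom{6}{5}\bigl(H_6-H_1\bigr)-\frac{q-1}{20},
\]
which is negative for $q$ large. For the positive example, any $P$ with $s=3$ and $d\leq4$ works by \cref{thm:dimension-four}. Alternatively, taking $q=2$ in the same family gives $\A_1(\Delta_{3,2};1)=6(H_6-1)-\frac1{20}>0$; in that case the remaining coefficients $\A_2,\A_3$ at $1$ would need a short check with \eqref{eq:formula-elementary}. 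To avoid that check, I prefer a $3$-dimensional polytope with $h^*$-degree $3$, for example a $3$-simplex with interior lattice points, which is covered by \cref{thm:dimension-four}.

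The main obstacle is making (c) honest: I have to exhibit an explicit polytope with $s$ exactly $3$ that is Taylor positive at $1$. Any simplex of dimension $3$ with an interior lattice point and no other extra lattice points works. One example is $\conv(e_1,e_2,e_3,-e_1-e_2-e_3)$, whose $h^*$-polynomial is $1+z+z^2+z^3$; I would verify this $h^*$-vector quickly.
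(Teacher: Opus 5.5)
Your proposal is correct. Parts (a) and (b) follow the paper exactly: they are \cref{thm:degree-bound} at $k=0$ and $k=1$. Your Reeve witness is the same phenomenon the paper records with $\Delta_{2,q}$, whose linear Ehrhart coefficient is $(12-q)/6$.

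For (c) you take a different route. The paper uses two members of one family, $\Delta_{3,19}$ and $\Delta_{3,118}$. Both are $5$-dimensional with $h^*$-polynomial of the form $1+(q-1)z^3$. It reads the answer off the explicit expansion of $L_{\Delta_{3,q}}(1+x)$ in \cref{prop:d5-family}: every coefficient is positive at $q=19$, while $\A_2(\Delta_{3,118};1)=-\frac1{24}$.

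You instead pair two general-theorem examples:
\begin{itemize}
\item a $3$-dimensional polytope with $s=3$, which is Taylor positive at $1$ by \cref{thm:dimension-four};
\item $\Delta_{3,q}$ with $q\geq176$, for which $\A_1(\Delta_{3,q};1)=\frac{175-q}{20}<0$, via \cref{thm:sharp-center}.
\end{itemize}
This avoids computing any expansion and suffices for the literal claim about the $h^*$-degree alone. It is, however, weaker than the paper's demonstration. Your two examples differ in dimension, so they do not rule out that the pair $(d,s)$ determines Taylor positivity at $1$. The paper's pair shows that even fixing $d=5$, $s=3$, and the shape of $h^*$ does not determine it.

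Your fallback $q=2$ would give the stronger same-dimension statement. The check you wanted to avoid is immediate from \cref{prop:d5-family}: all coefficients of $L_{\Delta_{3,q}}(1+x)$ are positive for $2\leq q\leq116$.

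A minor point about your $3$-dimensional example: the hypothesis ``no other extra lattice points'' is unnecessary. By reciprocity $h_3^*=|\relint(P)\cap\Z^3|$, so any lattice $3$-polytope with an interior lattice point has $s=3$. \cref{thm:dimension-four} then covers it, and you do not need the full $h^*$-vector $1+z+z^2+z^3$.
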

\begin{proof}
Parts (a) and (b) follow from \cref{thm:degree-bound} at $k=0$ and $k=1$,
respectively.  Part (c) will be demonstrated by the two members
$\Delta_{3,19}$ and $\Delta_{3,118}$ below, which have the same $h^*$-degree
but opposite behavior at $1$.
\end{proof}

For $s=2$, the family in \cref{sec:sharp-family} recovers the standard
Reeve phenomenon.  From \eqref{eq:family-ehrhart}, we have 
\[ L_{\Delta_{2,q}}(t)=\binom{t+3}{3}+(q-1)\binom{t+1}{3}=1+\frac{12-q}{6}t+t^2+\frac q6t^3.
\]
It is Ehrhart positive exactly for $1\leq q\leq11$, Taylor nonnegative at
$0$ exactly for $1\leq q\leq12$, and has a negative coefficient exactly
for $q\geq13$.  Nevertheless,
\begin{equation*}
 L_{\Delta_{2,q}}(1+x)=4+\frac{q+12}{3}x+\frac{q+2}{2}x^2+\frac q6x^3
\end{equation*}
has only positive coefficients.

The next result gives the complete five-dimensional calculation.

\begin{example}\label{prop:d5-family}
By \cref{prop:family-hstar}, for $q\geq2$ we have 
\begin{align*}
 L_{\Delta_{3,q}}(t)={}&1+\frac{2q+135}{60}t+\frac{15}{8}t^2+\frac{18-q}{24}t^3+\frac18t^4+\frac q{120}t^5,\\
 L_{\Delta_{3,q}}(1+x)={}&6+\frac{175-q}{20}x+\frac{117-q}{24}x^2+\frac{q+30}{24}x^3+\frac{q+3}{24}x^4+\frac q{120}x^5,\\
 L_{\Delta_{3,q}}(2+x)={}&21+\frac{4q+455}{20}x+\frac{5(2q+45)}{24}x^2+\frac{7(q+6)}{24}x^3+\frac{2q+3}{24}x^4+\frac q{120}x^5.
\end{align*}
Consequently:
\begin{enumerate}
 \item at $0$, it is Taylor positive for $2\leq q\leq17$, Taylor nonnegative but not Taylor positive for $q=18$, and has a negative
 coefficient for every $q\geq19$;
 \item it is Taylor positive at $1$ for $2\leq q\leq116$;
 \item it is Taylor nonnegative but not Taylor positive at $1$ when $q=117$;
 \item it is not Taylor nonnegative at $1$ for every $q\geq118$;
 \item it is Taylor positive at $2$ for every $q\geq2$.
\end{enumerate}
\end{example}

Dimension five is the smallest dimension in which a lattice polytope can fail to be Taylor nonnegative at $1$.  More precisely,      
\cref{thm:dimension-four} shows that every lattice polytope of dimension at most four is Taylor positive at $1$, whereas 
\cref{prop:d5-family} gives the five-dimensional simplex $\Delta_{3,118}$ has
\[\A_2(\Delta_{3,118};1)=-\frac1{24}.\]

At $q=19$, the coefficient of $t^3$ in $L_{\Delta_{3,q}}(t)$ is $-1/24$, but all coefficients in $L_{\Delta_{3,q}}(1+x)$ are positive. Thus the shift from $0$ to $1$ completely repairs the coefficient sign pattern, making the polynomial Taylor positive at the new center.

At $q=118$, the coefficient of $t^3$ at $0$ equals $\frac{18-118}{24}=-\frac{25}{6}$,
while the coefficient of $x^3$ at $1$ equals $(118+30)/24>0$.  However, the quadratic coefficient at $1$ is
$-1/24$.  Thus the original cubic negativity disappears and quadratic negativity emerges.  This illustrates the downward propagation of negativity.

\section{Optimal centers in every dimension}\label{sec:optimal-universal-shift}

For odd $d$, the estimate in \cref{prop:dimension-bound} is sharp.  For $d=2m$, it gives the center $m$, whereas the optimal value is $m-1$. Consequently, the optimal universal center is the same in two consecutive dimensions.
The principal result of this section is
\begin{equation*}
 \mathfrak u_d=\mathfrak u_d^+ =\left\lfloor\frac{d-1}{2}\right\rfloor.
\end{equation*}
The existence of these minima for $\mathfrak u_d$ and $\mathfrak u_d^+$ follows from \cref{prop:dimension-bound}, and clearly $\mathfrak u_d\leq\mathfrak u_d^+$.

\subsection{A coefficientwise inequality for shifted rising factorials}

For a polynomial $F(x)$, write $[x^\ell]F(x)$ for the coefficient of
$x^\ell$ in $F(x)$.  Recall the rising factorial
\[
 x^{\overline d}:=x(x+1)\cdots(x+d-1).
\]

\begin{lemma}\label{lem:shifted-rising-comparison}
Let $d=2m\geq2$, let $1\leq\ell\leq d-1$, and set
\[C_{d,\ell}:=[x^\ell]x^{\overline d}.
\]
Then $C_{d,\ell}>0$, and the following two inequalities hold.
\begin{enumerate}
\item[(a)] For every integer $a$ with $0\leq a\leq m$, we have 
\begin{equation}\label{eq:positive-rising-shift}
[x^\ell](x+a)^{\overline d}\geq\frac{2a+1}{d-1}\,C_{d,\ell}.
\end{equation}
\item[(b)] For every integer $b$ with $1\leq b\leq m$, we have 
\begin{equation}\label{eq:negative-rising-shift}
[x^\ell](x-b)^{\overline d}\geq-\frac{2b-1}{d-1}\,C_{d,\ell}.
\end{equation}
\end{enumerate}
\end{lemma}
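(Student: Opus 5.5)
The plan is to treat the two parts separately: part (a) by a termwise comparison of elementary symmetric functions, and part (b) by a pairing argument in the spirit of \cref{lem:beck}. Positivity of $C_{d,\ell}$ is immediate. Since $x^{\overline d}=x\prod_{i=1}^{d-1}(x+i)$, we have $C_{d,\ell}=e_{d-\ell}(1,2,\ldots,d-1)$, which is positive for $1\leq\ell\leq d-1$. In the same way, for a real shift $c$,
\[
 [x^\ell](x+c)^{\overline d}=e_{d-\ell}(c,c+1,\ldots,c+d-1),
\]
which is the quantity $g_r(n,\ell)$ of \eqref{eq:g-definition} with $c=n-r+1$.

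For (a), first dispose of $a=0$, where the claim reads $C_{d,\ell}\geq C_{d,\ell}/(d-1)$. For $a\geq1$, match each monomial $\prod_{i\in S}i$ of $e_{d-\ell}(1,\ldots,d-1)$, where $S\subseteq\{1,\ldots,d-1\}$ and $|S|=d-\ell\geq1$, with the monomial $\prod_{i\in S}(a+i)$ of $e_{d-\ell}(a,a+1,\ldots,a+d-1)$. The remaining monomials, those involving the constant $a$, are nonnegative and can be dropped. For $i_0=\max S$, each factor satisfies $(a+i)/i\geq1$, and the factor indexed by $i_0$ gives $(a+i_0)/i_0\geq(a+d-1)/(d-1)$. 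Summing over $S$ yields
\[
 [x^\ell](x+a)^{\overline d}\geq\frac{a+d-1}{d-1}\,C_{d,\ell}.
\]
This implies \eqref{eq:positive-rising-shift} whenever $a+d-1\geq2a+1$, that is, $a\leq d-2=2m-2$, which covers all $a\leq m$ once $m\geq2$. The leftover case $m=1$, $d=2$ is checked by hand: $[x](x+a)(x+a+1)=2a+1$ and $C_{2,1}=1$, so equality holds.

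For (b), write
\[
 (x-b)^{\overline d}=x\prod_{i=1}^{b-1}(x^2-i^2)\cdot(x-b)\prod_{j=b}^{d-1-b}(x+j).
\]
The last product is nonempty and has positive constants because $b\leq m$. The factor $x$ shifts coefficients by one degree. Negative contributions to $[x^\ell]$ come only from choosing an odd number of constants among $-1,\ldots,-b$. The plan is to pair each such monomial with the monomial obtained by replacing a chosen $-i$ by the corresponding $+i$, or by a larger positive constant $j$ from the tail. In this pairing the positive partner cancels it up to a controlled defect. Only the unpaired piece coming from $(x-b)$ against the missing $x+b$ partner should survive. That piece is bounded below by $-(2b-1)/(d-1)$ times the matching sum in $C_{d,\ell}$, by the same ``largest-factor ratio'' estimate used in (a), with ratio $(2b-1)/(d-1)$ in place of $(a+d-1)/(d-1)$. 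As a fallback, I would induct on $b$ using the exact recursion $(x+d-b)\,(x-b)^{\overline d}=(x-b)\,(x-b+1)^{\overline d}$. Comparing coefficients of $x^{\ell+1}$ expresses $[x^\ell](x-b)^{\overline d}$ through the coefficients at shift $-b+1$. The inequality would then propagate from $b=1$ (checked directly, since $(x-1)^{\overline d}=(x-1)x(x+1)\cdots(x+d-2)$) together with the bound at the next degree. In both routes I would mimic the piecewise-linear and convexity reasoning summarized after \cref{lem:beck}, whose inequality is an affine lower bound of the same kind in the shift $c$.

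I expect part (b) to be the main obstacle. Part (a) is a clean monotonicity statement, whereas (b) needs genuine cancellation between signed terms. The constant $(2b-1)/(d-1)$ is tight enough, with equality already at $d=2$, $b=1$, that a careless pairing will lose it. Getting the bookkeeping exactly right at the extreme case $b=m$, where the positive tail shrinks to the single factor $x+m-1$, is the delicate point.
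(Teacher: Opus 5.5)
Your argument for $C_{d,\ell}>0$ and for part (a) is correct, though it differs from the paper's. The paper bounds $E_q(a)=e_q(a,\ldots,a+d-1)$ below by $E_q(0)+aE_q'(0)$ and uses a double-counting estimate on $e_q(1,\ldots,d-1)/e_{q-1}(1,\ldots,d-1)$ to get the factor $1+\frac{2a}{d-1}$. Your largest-factor comparison gives the weaker factor $1+\frac{a}{d-1}$. That still suffices for $m\geq2$, and you check $d=2$ by hand.

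Part (b), however, has a genuine gap. You pair negative monomials of $(x-b)^{\overline d}$ with positive monomials of the \emph{same} polynomial, but you never specify the pairing. You do not say which tail constant replaces $-i$, whether the map is injective, or what the ``controlled defect'' is. An internal cancellation of this kind cannot by itself produce a lower bound proportional to $C_{d,\ell}$. The claim that the surviving piece is controlled by ``the same largest-factor ratio'' has no content: in (a) that ratio is at least $1$, whereas here you need to compare negative terms with terms of $x^{\overline d}$.

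The bookkeeping is also off. In your factorization the tail $\prod_{j=b}^{d-1-b}(x+j)$ is empty when $b=m$, because $d-1-b=m-1<b$. For $b<m$ the factor $x+b$ does lie in the tail, so there is no ``missing $x+b$ partner''.

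The fallback does not close either. Comparing coefficients of $x^{\ell+1}$ in $(x+d-b)(x-b)^{\overline d}=(x-b)(x-b+1)^{\overline d}$ gives
\[
[x^\ell](x-b)^{\overline d}=[x^\ell](x-b+1)^{\overline d}-b\,[x^{\ell+1}](x-b+1)^{\overline d}-(d-b)\,[x^{\ell+1}](x-b)^{\overline d}.
\]
A lower bound on the left therefore requires \emph{upper} bounds on both degree-$(\ell+1)$ coefficients, which an induction carrying only lower bounds does not supply.

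The missing idea is to compare with $x^{\overline d}$ directly, through the common factor $x^{\overline{d-b}}=x(x+1)\cdots(x+d-b-1)$:
\[
(x-b)^{\overline d}=x^{\overline{d-b}}\prod_{\nu=1}^{b}(x-\nu),\qquad x^{\overline d}=x^{\overline{d-b}}\prod_{\nu=1}^{b}(x+d-b+\nu-1).
\]
For $1\leq\nu\leq b$ one has $b(d-b+\nu-1)-(d-1)\nu=(d-b-1)(b-\nu)\geq0$. Hence $e_j(d-b,\ldots,d-1)\geq\bigl(\frac{d-1}{b}\bigr)^je_j(1,\ldots,b)$. Set $\gamma_b=\frac{2b-1}{d-1}$. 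Since $\frac{d-1}{b}\geq\max\{1,\frac{d-1}{2b-1}\}$, for odd $j$ this gives $\gamma_b\,e_j(d-b,\ldots,d-1)\geq e_j(1,\ldots,b)$. So $\prod_{\nu=1}^{b}(x-\nu)+\gamma_b\prod_{\nu=1}^{b}(x+d-b+\nu-1)$ has nonnegative coefficients, and multiplying by $x^{\overline{d-b}}$ yields (b). This is exactly how the paper argues.

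In your monomial language, the fix reads as follows. A negative monomial consists of an odd set of constants $-\nu$ with $\nu\in A\subseteq\{1,\ldots,b\}$, together with constants from $x^{\overline{d-b}}$. Send it injectively to the monomial of $x^{\overline d}$ obtained by replacing each $-\nu$ by $d-b+\nu-1$. Its absolute value grows by at least $\frac{d-1}{b}$, so $[x^\ell](x-b)^{\overline d}\geq-\frac{b}{d-1}C_{d,\ell}\geq-\frac{2b-1}{d-1}C_{d,\ell}$. In particular the constant in (b) has slack for $b\geq2$. The obstacle is not tightness but choosing the right comparison polynomial.
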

\begin{proof}
Because $x^{\overline d}=x\prod_{\nu=1}^{d-1}(x+\nu)$, every coefficient of positive degree is positive.  Hence
$C_{d,\ell}>0$ for $1\leq\ell\leq d$.
We first prove part~(a).  Set $q=d-\ell$.  Since $1\leq\ell\leq d-1$, we have $1\leq q\leq d-1$.  Define
$E_q(a):=e_q(a,a+1,\ldots,a+d-1)$.
As usual, $e_0=1$. Expanding the product $(x+a)^{\overline d}$ shows that
\begin{equation}\label{eq:Eq-coefficient}
 E_q(a)=[x^\ell](x+a)^{\overline d}.
\end{equation}
Moreover, $E_q(0)=e_q(0,1,\ldots,d-1)=e_q(1,\ldots,d-1)=C_{d,\ell}$.
Every summand in
\[E_q(a)=\sum_{\substack{S\subseteq\{0,\ldots,d-1\}\\|S|=q}}\prod_{\nu\in S}(a+\nu)\]
is a polynomial in $a$ with nonnegative coefficients.  Consequently $E_q(a)$ itself has nonnegative coefficients, and hence
\begin{equation}\label{eq:Eq-linear-lower-bound}
 E_q(a)\geq E_q(0)+aE_q'(0)\qquad(a\geq0).
\end{equation}

We next estimate $E_q'(0)$.  Differentiating an elementary symmetric
function after shifting all its variables gives
\[E_q'(a)=\sum_{\nu=0}^{d-1}e_{q-1}(a,a+1,\ldots,\widehat{a+\nu},\ldots,a+d-1),
\]
where the hat denotes omission.  A fixed monomial of degree $q-1$
occurs in exactly $d-(q-1)=d-q+1$ of the summands.  Therefore, we have 
\begin{equation}\label{eq:Eq-derivative}
 E_q'(a)=(d-q+1)e_{q-1}(a,a+1,\ldots,a+d-1).
\end{equation}
At $a=0$, this becomes $E_q'(0)=(d-q+1)e_{q-1}(1,\ldots,d-1)$.

We claim that
\begin{equation}\label{eq:elementary-ratio-bound}
 e_q(1,\ldots,d-1)
 \leq\frac{(d-q+1)(d-1)}2e_{q-1}(1,\ldots,d-1).
\end{equation}
To prove the claim, set $y_\nu=\nu$ for $1\leq\nu\leq d-1$.
Double counting pairs consisting of a $q$-element set and one of its
elements gives
\begin{align*}
 q e_q(y_1,\ldots,y_{d-1})
=\sum_{\substack{T\subseteq\{1,\ldots,d-1\}\\|T|=q-1}}\left(\prod_{\nu\in T}y_\nu\right)\left(\sum_{\mu\notin T}y_\mu\right)
&\leq\left(\sum_{\mu=1}^{d-1}\mu\right)e_{q-1}(y_1,\ldots,y_{d-1})
\\& =\frac{d(d-1)}2e_{q-1}(y_1,\ldots,y_{d-1}).
\end{align*}
Thus
\[e_q(1,\ldots,d-1)\leq\frac{d(d-1)}{2q}e_{q-1}(1,\ldots,d-1).
\]
Finally, we have $q(d-q+1)-d=(q-1)(d-q)\geq0$, so $d/q\leq d-q+1$.  This proves \eqref{eq:elementary-ratio-bound}.
Combining \eqref{eq:Eq-derivative} and \eqref{eq:elementary-ratio-bound}, we obtain
$E_q'(0)\geq\frac2{d-1}E_q(0)$.
Substitution into \eqref{eq:Eq-linear-lower-bound} yields
\begin{align*}
E_q(a)\geq E_q(0)\left(1+\frac{2a}{d-1}\right)=\frac{d-1+2a}{d-1}E_q(0)\geq\frac{2a+1}{d-1}E_q(0),
\end{align*}
where the last inequality uses $d\geq2$.  Recalling \eqref{eq:Eq-coefficient} and $E_q(0)=C_{d,\ell}$ proves \eqref{eq:positive-rising-shift}.

We now prove part~(b).  Fix $1\leq b\leq m=d/2$, and set $\gamma_b:=\frac{2b-1}{d-1}$.
The two rising factorials have the factorizations
\begin{align*}
 (x-b)^{\overline d}=x^{\overline{d-b}}\prod_{\nu=1}^{b}(x-\nu),\quad
 x^{\overline d}=x^{\overline{d-b}}\prod_{\nu=d-b}^{d-1}(x+\nu).
\end{align*}
Therefore, we have 
\begin{equation}\label{eq:tail-factorization}
 (x-b)^{\overline d}+\gamma_bx^{\overline d}=x^{\overline{d-b}}H_{d,b}(x),
\end{equation}
where
\[ H_{d,b}(x) :=\prod_{\nu=1}^{b}(x-\nu) +\gamma_b\prod_{\nu=d-b}^{d-1}(x+\nu).
\]
We show that every coefficient of $H_{d,b}(x)$ is nonnegative.  For $0\leq j\leq b$, the coefficient of $x^{b-j}$ is
\begin{equation}\label{eq:H-coefficient}
 (-1)^je_j(1,\ldots,b)+\gamma_be_j(d-b,\ldots,d-1).
\end{equation}
If $j$ is even, this is plainly nonnegative.  Suppose that $j$ is odd.
For $1\leq\nu\leq b$, one has
\[ b(d-b+\nu-1)-(d-1)\nu =(d-b-1)(b-\nu)\geq0.
\]
Thus
\begin{equation}\label{eq:entrywise-high-low}
d-b+\nu-1\geq\frac{d-1}{b}\nu\qquad(1\leq\nu\leq b).
\end{equation}
Multiplying \eqref{eq:entrywise-high-low} over each $j$-element subset and then summing gives
\begin{equation}\label{eq:elementary-high-low}
 e_j(d-b,\ldots,d-1)\geq\left(\frac{d-1}{b}\right)^je_j(1,\ldots,b).
\end{equation}
Since $b\leq d/2$, we have $(d-1)/b\geq1$.  Since $j\geq1$,
\[\left(\frac{d-1}{b}\right)^j\geq\frac{d-1}{b}\geq\frac{d-1}{2b-1}=\frac1{\gamma_b}.
\]
It follows from \eqref{eq:elementary-high-low} that
\[\gamma_be_j(d-b,\ldots,d-1)\geq e_j(1,\ldots,b).
\]
Hence \eqref{eq:H-coefficient} is nonnegative also when $j$ is odd.
We have proved that $H_{d,b}(x)$ has nonnegative coefficients.  The polynomial $x^{\overline{d-b}}$ also has nonnegative coefficients. Equation \eqref{eq:tail-factorization} therefore shows, coefficient by
coefficient, that
\[[x^\ell](x-b)^{\overline d}+\gamma_b[x^\ell]x^{\overline d}\geq0.
\]
Since $[x^\ell]x^{\overline d}=C_{d,\ell}$, this is precisely \eqref{eq:negative-rising-shift}.
\end{proof}

\begin{lemma}\label{lem:even-comparison}
Let $d=2m\geq2$, let $B=m-1$, and let $g_r(B,\ell)$ be defined as in
\eqref{eq:g-definition}.  For every $1\leq\ell\leq d-1$, set
$\lambda_{d,\ell}:=\frac{[x^\ell]x^{\overline d}}{d-1}>0$.
Then we have $g_r(B,\ell)\geq\lambda_{d,\ell}(d+1-2r)$ for every $0\leq r\leq d$.
\end{lemma}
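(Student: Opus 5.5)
The plan is to recognize $g_r(B,\ell)$ as a coefficient of a shifted rising factorial, so that the statement becomes a direct restatement of \cref{lem:shifted-rising-comparison}, split according to the sign of the shift.

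\textbf{Step 1: rewrite $g_r(n,\ell)$.} By \eqref{eq:g-definition}, $g_r(n,\ell)$ is the elementary symmetric polynomial $e_{d-\ell}$ evaluated at the $d$ numbers $n+d-r-a$ for $a=0,1,\ldots,d-1$. These numbers are exactly $n-r+1,\,n-r+2,\ldots,n-r+d$. Using the identity $\prod_{\nu=1}^{d}(x+a_\nu)=\sum_j e_{d-j}(a_1,\ldots,a_d)x^j$, as in the proof of \cref{thm:exact-formulas}, we get
\[
g_r(n,\ell)=[x^\ell]\prod_{\nu=1}^{d}(x+n-r+\nu)=[x^\ell](x+n-r+1)^{\overline d}.
\]
With $n=B=m-1$ this becomes $g_r(B,\ell)=[x^\ell](x+m-r)^{\overline d}$.

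\textbf{Step 2: the case $0\leq r\leq m$.} Put $a=m-r$, so $0\leq a\leq m$. \cref{lem:shifted-rising-comparison}(a) gives
\[
g_r(B,\ell)\geq\frac{2a+1}{d-1}C_{d,\ell}.
\]
Since $2a+1=2m-2r+1=d+1-2r$ and $C_{d,\ell}/(d-1)=\lambda_{d,\ell}$, this is exactly $g_r(B,\ell)\geq\lambda_{d,\ell}(d+1-2r)$.

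\textbf{Step 3: the case $m<r\leq d$.} Put $b=r-m$, so $1\leq b\leq m$ because $r\leq d=2m$. \cref{lem:shifted-rising-comparison}(b) gives
\[
g_r(B,\ell)=[x^\ell](x-b)^{\overline d}\geq-\frac{2b-1}{d-1}C_{d,\ell}.
\]
Here $-(2b-1)=-(2r-2m-1)=d+1-2r$, so again $g_r(B,\ell)\geq\lambda_{d,\ell}(d+1-2r)$.

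\textbf{Step 4: positivity.} Positivity of $\lambda_{d,\ell}$ comes from $C_{d,\ell}>0$, which was established at the start of the proof of \cref{lem:shifted-rising-comparison}.

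The only point needing care is the index bookkeeping in Step 1. One must check that the set $\{n+d-r-a : 0\leq a\leq d-1\}$ coincides with $\{n-r+\nu : 1\leq\nu\leq d\}$. One must also check that the two ranges of $r$ map exactly onto the hypotheses $0\leq a\leq m$ and $1\leq b\leq m$ of the earlier lemma. Once these are verified, no genuine obstacle remains, since all the analytic work is already contained in \cref{lem:shifted-rising-comparison}.
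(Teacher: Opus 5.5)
Your proposal is correct and follows the paper's proof essentially verbatim. Like the paper, you rewrite $g_r(B,\ell)$ as $[x^\ell](x+m-r)^{\overline d}$, then apply parts (a) and (b) of \cref{lem:shifted-rising-comparison} with $a=m-r$ for $0\leq r\leq m$ and $b=r-m$ for $m<r\leq d$, and your index checks are right.
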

\begin{proof}
By the definition of $g_r$, we have 
\begin{align*}
g_r(B,\ell)=[x^\ell]\prod_{\nu=1}^{d}(x+B-r+\nu)=[x^\ell]\prod_{\nu=1}^{d}(x+m-1-r+\nu)=[x^\ell](x+m-r)^{\overline d}.
\end{align*}
First suppose that $0\leq r\leq m$, and set $a=m-r$.  Then
$0\leq a\leq m$ and $d+1-2r=2m+1-2r=2a+1$.
Part~(a) of \cref{lem:shifted-rising-comparison} gives
\[g_r(B,\ell)\geq\frac{2a+1}{d-1}[x^\ell]x^{\overline d}=\lambda_{d,\ell}(d+1-2r).
\]
Now suppose that $m+1\leq r\leq d$, and set $b=r-m$.  Then
$1\leq b\leq m$ and $d+1-2r=1-2b=-(2b-1)$.
Part~(b) of \cref{lem:shifted-rising-comparison} gives
\[g_r(B,\ell)\geq-\frac{2b-1}{d-1}[x^\ell]x^{\overline d}=\lambda_{d,\ell}(d+1-2r).
\]
This covers every $0\leq r\leq d$.
\end{proof}

\subsection{Taylor positivity in even dimension}

\begin{theorem}\label{thm:even-dimensional-optimal-upper}
Let $P$ be a $2m$-dimensional lattice polytope, where $m\geq1$.  Then $P$ is Taylor positive at $m-1$:
\[
 \A_\ell(P;m-1)>0\qquad(0\leq\ell\leq2m).
\]
Equivalently, $L_P(m-1+x)$ has positive coefficients in every degree.
\end{theorem}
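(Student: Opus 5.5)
The plan is to mirror the proof of \cref{prop:dimension-bound}, replacing the Beck et al.\ inequality of \cref{lem:beck} by its even-dimensional sharpening in \cref{lem:even-comparison}. Set $d=2m$ and $B=m-1\geq0$.

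First we handle the extreme indices. By \cref{prop:automatic-coefficients}, since $B$ is a nonnegative integer, we have $\A_0(P;B)=L_P(B)>0$, $\A_{d-1}(P;B)>0$ and $\A_d(P;B)=c_d>0$. In fact only $1\leq\ell\leq d-1$ needs to be treated, and this is exactly the range covered by \cref{lem:even-comparison}.

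For $1\leq\ell\leq d-1$, we express the coefficient through the $h^*$-vector. By \eqref{eq:factorized-shift} with $k=B$, the coefficient of $x^\ell$ in $L_P(B+x)$ is
\[
\A_\ell(P;B)=\frac1{d!}\sum_{r=0}^{d}h_r^*\,[x^\ell]\prod_{\nu=1}^{d}(x+B-r+\nu)=\frac1{d!}\sum_{r=0}^{d}h_r^*\,g_r(B,\ell).
\]
This is the identity shown in the first line of the proof of \cref{lem:even-comparison}; it is equivalent to \eqref{eq:derivative-g} combined with \eqref{eq:formula-derivative}. Because every $h_r^*\geq0$, \cref{lem:even-comparison} gives
\[
d!\,\A_\ell(P;B)\geq\lambda_{d,\ell}\sum_{r=0}^{d}h_r^*(d+1-2r)=2\lambda_{d,\ell}(d-1)!\,c_{d-1},
\]
where the last equality is \eqref{eq:surface-hstar}. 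Since $\lambda_{d,\ell}>0$ and $c_{d-1}>0$ (half the normalized boundary volume), we obtain $\A_\ell(P;m-1)>0$.

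The substantive work has already been done in \cref{lem:shifted-rising-comparison} and \cref{lem:even-comparison}. The only point that needs care is bookkeeping: $g_r(B,\ell)$ must be identified as $[x^\ell](x+m-r)^{\overline d}$, and the sign of $d+1-2r$ must match the case split $r\leq m$ versus $r>m$ in that lemma. Both were verified there. The case $m=1$, where $B=0$, is harmless: it reduces to Ehrhart positivity of lattice polygons, and the argument above covers it uniformly anyway.
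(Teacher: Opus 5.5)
Your proposal is correct and follows essentially the same route as the paper: you write $\A_\ell(P;m-1)=\frac{1}{d!}\sum_{r}h_r^*g_r(m-1,\ell)$, which is \eqref{eq:derivative-g} divided by $\ell!$, bound each $g_r$ below by $\lambda_{d,\ell}(d+1-2r)$ via \cref{lem:even-comparison}, and finish with $\sum_r h_r^*(d+1-2r)=2(d-1)!\,c_{d-1}>0$. The only cosmetic difference is that you dispatch $\ell=0,d-1,d$ through \cref{prop:automatic-coefficients}, whereas the paper treats $\ell=0$ and $\ell=d$ directly and lets the lemma cover $\ell=d-1$.
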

\begin{proof}
Set $d=2m$.
For $\ell=0$, the center $m-1$ is a nonnegative integer.  Hence
\[\A_0(P;m-1)=L_P(m-1)=|(m-1)P\cap\Z^d|>0.
\]
This remains true for $m-1=0$, because $0P=\{0\}$.
For $1\leq\ell\leq d-1$, equation \eqref{eq:derivative-g} gives
\begin{equation}\label{eq:even-derivative-sum}
 L_P^{(\ell)}(m-1)=\frac{\ell!}{d!}\sum_{r=0}^{d}h_r^*g_r(m-1,\ell).
\end{equation}
Apply \cref{lem:even-comparison}.  Since every $h_r^*$ is nonnegative, we have 
\[\sum_{r=0}^{d}h_r^*g_r(m-1,\ell)\geq\lambda_{d,\ell}\sum_{r=0}^{d}h_r^*(d+1-2r).
\]
By \eqref{eq:surface-hstar}, we have
\[\sum_{r=0}^{d}h_r^*(d+1-2r)=2(d-1)!c_{d-1}>0.
\]
Because $\lambda_{d,\ell}>0$, we conclude that the sum on the left is positive.  Equation \eqref{eq:even-derivative-sum} now implies
$L_P^{(\ell)}(m-1)>0$ for $1\leq\ell\leq d-1$. Finally, we obtain $L_P^{(d)}(m-1)=d!c_d>0$ because $c_d>0$. 
Therefore, we have 
\[\A_\ell(P;m-1)=\frac{L_P^{(\ell)}(m-1)}{\ell!}>0\qquad(0\leq\ell\leq d),
\]
as required.
\end{proof}

\begin{remark}
For $d=4$, the conclusion at center $1$ is already contained in the
coefficient estimates in \cite[Proposition~4.7]{BeckEtAl2005}.  In arbitrary dimension, Beck,
De Loera, Develin, Pfeifle, and Stanley proved positivity at the center $\lfloor d/2\rfloor$; see
\cite[Theorem~1.2(b) and its proof]{BeckEtAl2005}.  To the best of our knowledge, the improvement to $m-1$ for every even dimension $d=2m$, and hence the exact universal shift below, have not previously been recorded.  For recent accounts that continue to state
$\lfloor d/2\rfloor$ as the general real-root endpoint, see \cite{FerroniHigashitani2024,Koelbl2025}.
\end{remark}

The theorem improves the dimension part of \cref{cor:combined-bound}.

\begin{theorem}\label{cor:improved-combined-bound}
Let $P$ be a $d$-dimensional lattice polytope, and let
$s=\deg h_P^*(z)$.  Then
\[ 0\leq\tau(P)\leq\tau^+(P)\leq\min\left\{\max\{0,s-1\},\left\lfloor\frac{d-1}{2}\right\rfloor\right\}.
\]
\end{theorem}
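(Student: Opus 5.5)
The statement combines two upper bounds that are already available, so the plan is to show that $\tau^+(P)$ is at most each of them separately. The inequalities $0\leq\tau(P)\leq\tau^+(P)$ are immediate from \cref{def:positivity-indices}, since both indices are minima over subsets of $\N$ and positivity implies nonnegativity. It therefore suffices to exhibit two nonnegative integer centers at which $P$ is Taylor positive, namely $k_1=\max\{0,s-1\}$ and $k_2=\lfloor\frac{d-1}{2}\rfloor$. Then $\tau^+(P)\leq\min\{k_1,k_2\}$ by minimality.

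For $k_1$: it is a nonnegative integer satisfying $k_1\geq\max\{0,s-1\}$. \cref{thm:degree-bound} gives $\A_j(P;k_1)>0$ for all $0\leq j\leq d$, so $\tau^+(P)\leq k_1$.

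For $k_2$, split by the parity of $d$.
\begin{enumerate}
\item[(i)] If $d$ is odd, then $\lfloor\frac{d-1}{2}\rfloor=\frac{d-1}{2}=\lfloor\frac d2\rfloor$. \cref{prop:dimension-bound} gives Taylor positivity at $\lfloor d/2\rfloor$, hence at $k_2$.
\item[(ii)] If $d=2m$ is even with $m\geq1$, then $\lfloor\frac{d-1}{2}\rfloor=m-1$. \cref{thm:even-dimensional-optimal-upper} gives $\A_\ell(P;m-1)>0$ for every $\ell$.
\end{enumerate}
In both cases $k_2\in\N$, since $d\geq1$, and $P$ is Taylor positive there, so $\tau^+(P)\leq k_2$.

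The only point needing care is the degenerate case $d=1$ (or generally $k_2=0$), where the even-dimensional theorem does not apply. There $d=1$ is odd and falls under (i): $\lfloor d/2\rfloor=0$, and \cref{prop:dimension-bound} gives positivity at $0$. I do not expect a genuine obstacle, since the substantive work is in \cref{thm:even-dimensional-optimal-upper} and \cref{lem:even-comparison}, which are taken as given. The argument is simply the combination of these with minimality, as in \cref{cor:combined-bound}, with the dimension bound $\lfloor d/2\rfloor$ replaced by $\lfloor (d-1)/2\rfloor$ in even dimensions.
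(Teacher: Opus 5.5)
Your proposal is correct and follows the paper's proof exactly: the degree part comes from \cref{thm:degree-bound}, the odd case from \cref{prop:dimension-bound} via $\lfloor\frac{d-1}{2}\rfloor=\lfloor\frac d2\rfloor$, and the even case $d=2m$ from \cref{thm:even-dimensional-optimal-upper}. One harmless inaccuracy is your parenthetical suggesting the even-dimensional theorem fails whenever $k_2=0$, since for $d=2$ one has $k_2=0$ and \cref{thm:even-dimensional-optimal-upper} with $m=1$ applies directly (its proof treats the case $m-1=0$ explicitly).
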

\begin{proof}
The degree estimate is \cref{thm:degree-bound}.  If $d$ is odd, then $\lfloor\frac{d-1}{2}\rfloor=\lfloor\frac d2\rfloor$,
so the dimension estimate follows from \cref{prop:dimension-bound}.  If $d=2m$ is even, it follows from \cref{thm:even-dimensional-optimal-upper}.
\end{proof}

\begin{theorem}\label{thm:all-dimensional-optimal-shift}
For every $d\geq1$, we have 
\[\mathfrak u_d=\mathfrak u_d^+=\left\lfloor\frac{d-1}{2}\right\rfloor.
\]
Equivalently, if $d=2m-1$ or $d=2m$, then $\mathfrak u_d=\mathfrak u_d^+=m-1$.
\end{theorem}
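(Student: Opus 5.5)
The proof splits into an upper bound and a matching lower bound. Throughout, write $d=2m-1$ or $d=2m$, so that $\lfloor\frac{d-1}{2}\rfloor=m-1$.

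\emph{Upper bound.} By \cref{cor:improved-combined-bound}, every $d$-dimensional lattice polytope is Taylor positive at $m-1$. For odd $d$ this comes from \cref{prop:dimension-bound}, and for even $d$ from \cref{thm:even-dimensional-optimal-upper}. Hence $m-1$ belongs to the set defining $\mathfrak u_d^+$, which gives $\mathfrak u_d\leq\mathfrak u_d^+\leq m-1$.

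\emph{Lower bound.} For $d\leq 2$ we have $m-1=0$, so the bound $\mathfrak u_d\geq 0$ is trivial. For $d\geq3$ it suffices to exhibit a single $d$-dimensional lattice polytope $P$ with $\tau(P)=m-1$. Indeed, if $k<m-1$ were admissible for $\mathfrak u_d$, then every $d$-polytope would be Taylor nonnegative at $k$, forcing $\tau(P)\leq k<m-1$.

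\begin{itemize}
\item For odd $d=2m-1\geq3$, take $s=m\geq2$ and $\Delta_{s,q}$ with $q$ satisfying \eqref{eq:q-large-condition}. Then \cref{thm:sharp-center} gives $\tau=s-1=m-1$.
\item For even $d=2m\geq4$, take the pyramid $\Delta_{m,q}^{(2m)}$ with $s=m\geq2$ and $D=2m\geq2s-1$. Choosing $q>1+D(D-1)C_{D,s}$, \cref{thm:pyramid-sharpness} gives $\tau=s-1=m-1$.
\end{itemize}

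In either case $\mathfrak u_d\geq m-1$. Combining the two bounds gives $m-1\leq\mathfrak u_d\leq\mathfrak u_d^+\leq m-1$, which proves the equality.

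The substantive work is already contained in the upper bound, namely \cref{thm:even-dimensional-optimal-upper}. What remains here is bookkeeping: checking that the pyramid construction with $s=m$ and $D=2m$ lands exactly at $\lfloor\frac{d-1}{2}\rfloor$ in even dimension, and treating the small cases $d=1,2$ separately, where $m-1=0$.
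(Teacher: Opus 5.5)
Your proposal is correct and matches the paper's proof: the upper bound comes from \cref{cor:improved-combined-bound}, the cases $d=1,2$ are trivial, and the lower bound for $d\geq3$ comes from the generalized Reeve simplices and their pyramids. The only cosmetic difference is that the paper treats both parities at once. It takes $s=\lceil d/2\rceil$ and applies \cref{thm:pyramid-sharpness} to $\Delta_{s,q}^{(d)}$, which for odd $d$ is just $\Delta_{s,q}$. You instead split into the odd case via \cref{thm:sharp-center} and the even case via a single pyramid.
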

\begin{proof}
Set $K_d=\lfloor(d-1)/2\rfloor$.  By \cref{cor:improved-combined-bound}, every $d$-dimensional lattice
polytope is Taylor positive at $K_d$.  Therefore
\begin{equation}\label{eq:universal-upper-K}
 \mathfrak u_d\leq\mathfrak u_d^+\leq K_d.
\end{equation}
For $d=1$ and $d=2$, we have $K_d=0$.  Since the universal centers are nonnegative integers, equality follows immediately from
\eqref{eq:universal-upper-K}.
Suppose now that $d\geq3$, and set $s=\lceil d/2\rceil$.  Then $s\geq2$, $2s-1\leq d$, and
$s-1=\lfloor\frac{d-1}{2}\rfloor=K_d$.
Choose $q$ satisfying the large-$q$ condition in \cref{thm:pyramid-sharpness}, and consider
$\Delta_{s,q}^{(d)}=\operatorname{pyr}^{\,d-(2s-1)}(\Delta_{s,q})$.
By \cref{thm:pyramid-sharpness}, we have 
\[\tau(\Delta_{s,q}^{(d)})=\tau^+(\Delta_{s,q}^{(d)})=s-1=K_d.\]
Consequently, no integer smaller than $K_d$ works universally, either for nonnegativity or for positivity.  Thus
$K_d\leq\mathfrak u_d\leq\mathfrak u_d^+\leq K_d$, which proves the result.
\end{proof}

\begin{corollary}\label{cor:improved-real-root-upper-bound}
Let $P$ be a $d$-dimensional lattice polytope.  Then every real root
$\alpha$ of $L_P(t)$ satisfies $\alpha<\lfloor\frac{d-1}{2}\rfloor$. Together with the lower bound in
\cite[Theorem~1.2(b)]{BeckEtAl2005}, this shows that every real Ehrhart root lies in
\[\left[-d,\left\lfloor\frac{d-1}{2}\right\rfloor\right).
\]
\end{corollary}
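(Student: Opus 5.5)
The plan is to combine the universal Taylor positivity at $K_d:=\lfloor\frac{d-1}{2}\rfloor$ with \cref{cor:root-free}. By \cref{cor:improved-combined-bound} (equivalently \cref{thm:all-dimensional-optimal-shift}), every $d$-dimensional lattice polytope $P$ is Taylor positive, in particular Taylor nonnegative, at the nonnegative integer center $k=K_d$. \cref{cor:root-free} then applies with $k=K_d\in\N$. It gives $L_P(t)>0$ for every real $t>K_d$. It also gives $L_P(K_d)=|K_dP\cap\Z^d|>0$, which remains valid when $K_d=0$ because $L_P(0)=1$. Hence $L_P$ has no real root in $[K_d,\infty)$, so every real root $\alpha$ satisfies $\alpha<K_d$.

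Alternatively, one can argue directly from the expansion $L_P(K_d+x)=\sum_j \A_j(P;K_d)x^j$. All coefficients are positive, so for $x\geq0$ the value is at least $\A_0(P;K_d)>0$.

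For the lower endpoint, I simply cite \cite[Theorem~1.2(b)]{BeckEtAl2005}, which states that all real roots of Ehrhart polynomials lie in $[-d,\lfloor d/2\rfloor)$, and keep only the bound $\alpha\geq -d$. Intersecting the two constraints gives $\alpha\in[-d,K_d)$.

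There is no real obstacle here: the substantive work was the even-dimensional improvement in \cref{thm:even-dimensional-optimal-upper}. The only point needing care is that the inequality is strict. It relies on positivity of $L_P$ at the integer center itself, which \cref{cor:root-free} supplies precisely because $K_d$ is a nonnegative integer.
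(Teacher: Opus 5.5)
Your proposal is correct and matches the paper's proof. The paper also uses \cref{cor:improved-combined-bound} to get Taylor positivity at $K=\lfloor\frac{d-1}{2}\rfloor$, concludes that $L_P(K+x)>0$ for all $x\geq0$, and quotes the lower bound $-d$ from \cite[Theorem~1.2(b)]{BeckEtAl2005}.
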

\begin{proof}
Set $K=\lfloor(d-1)/2\rfloor$.  By \cref{cor:improved-combined-bound},
$L_P(K+x)=\sum_{j=0}^{d}\A_j(P;K)x^j$ has positive coefficients.  Hence $L_P(K+x)>0$ for every $x\geq0$, and $L_P$ has no real root in $[K,\infty)$.  The lower bound $-d$ is the lower half of \cite[Theorem~1.2(b)]{BeckEtAl2005}.
\end{proof}

\begin{remark}
For $d=2m\geq4$, the sharp examples may be taken to be the single lattice pyramids $\Delta_{m,q}^{(2m)}=\operatorname{pyr}(\Delta_{m,q})$.
They satisfy
$h_{\Delta_{m,q}^{(2m)}}^*(z)=1+(q-1)z^m$
and
\[L_{\Delta_{m,q}^{(2m)}}(t)=\binom{t+2m}{2m}+(q-1)\binom{t+m}{2m}.
\]
For all sufficiently large $q$, they satisfy $\tau(\Delta_{m,q}^{(2m)})=\tau^+(\Delta_{m,q}^{(2m)})=m-1$.
\end{remark}

\section{Real and integer Taylor-nonnegativity indices}\label{sec:root-geometry}

The invariant $\rho(P)$ determines $\tau(P)$ and $\tau^+(P)$, except that positivity at the origin must be recorded separately.

\begin{proposition}\label{prop:natural-rounding}
Let $P$ be a lattice polytope and set $r=\rho(P)$.  Then $\tau(P)=\lceil r\rceil$.
Moreover,
\[\tau^+(P)=
 \begin{cases}
  0,&r=0\text{ and }\A_j(P;0)>0\text{ for every }j,\\
  1,&r=0\text{ and }\A_j(P;0)=0\text{ for at least one }j,\\
  \lceil r\rceil,&r\notin\Z,\\
  r+1,&r\in\Z_{>0}.
 \end{cases}
\]
Equivalently, if $r>0$, then the set of real centers at which all Taylor coefficients are positive is $(r,\infty)$.
\end{proposition}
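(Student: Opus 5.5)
The plan is to derive everything from the description $\mathcal N(P)=[\rho(P),\infty)$ in \eqref{eq:real-positivity-ray}, together with \cref{cor:positivity-persistence}.

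\textbf{The index $\tau(P)$.} By definition, $\tau(P)$ is the least nonnegative integer lying in $\mathcal N(P)=[r,\infty)$. Since $r\geq0$, that integer is $\lceil r\rceil$.

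\textbf{The positivity set for real centers.} Let $\mathcal P(P)$ denote the set of real centers $k\geq0$ at which every $\A_j(P;k)>0$.

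First, $(r,\infty)\subseteq\mathcal P(P)$. This follows from \cref{cor:positivity-persistence} applied with $k_0=r\in\mathcal N(P)$: every $k>r$ gives strict positivity.

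Second, $\mathcal P(P)\subseteq\mathcal N(P)=[r,\infty)$, trivially. So it remains to decide whether $r$ itself lies in $\mathcal P(P)$.

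\textbf{The key claim: if $r>0$, then $r\notin\mathcal P(P)$.} Suppose instead that all $\A_j(P;r)>0$. Each $k\mapsto\A_j(P;k)$ is a polynomial, hence continuous. Strict positivity of finitely many continuous functions at $r$ persists on some interval $(r-\varepsilon,r]$ with $r-\varepsilon\geq0$. That puts points smaller than $r$ into $\mathcal N(P)$, contradicting $\mathcal N(P)=[r,\infty)$. Hence for $r>0$ we get $\mathcal P(P)=(r,\infty)$, which is the stated equivalent form. This minimality-plus-continuity step is the only nontrivial point, and it is routine.

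\textbf{Reading off $\tau^+(P)$.} The index $\tau^+(P)$ is the least nonnegative integer in $\mathcal P(P)$.

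If $r>0$ and $r\notin\Z$, the least integer exceeding $r$ is $\lceil r\rceil$.

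If $r\in\Z_{>0}$, the least integer exceeding $r$ is $r+1$.

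If $r=0$, then $\mathcal P(P)\supseteq(0,\infty)$, so $1\in\mathcal P(P)$. Whether $0$ also belongs is decided directly by the signs of the $\A_j(P;0)$, all of which are $\geq0$ because $0\in\mathcal N(P)$. This gives $\tau^+(P)=0$ when all $\A_j(P;0)>0$. Otherwise some $\A_j(P;0)=0$, and then $\tau^+(P)=1$.
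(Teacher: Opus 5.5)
Your proposal is correct and follows essentially the same argument as the paper. You read off $\tau(P)$ from $\mathcal N(P)=[\rho(P),\infty)$, and you get positivity on $(r,\infty)$ from \cref{cor:positivity-persistence}, where the paper simply repeats that corollary's translation-formula argument inline with $k_0=r$. For $r>0$ you use continuity plus the minimality of $r$ to force some coefficient to vanish at $r$, and then read off the cases for $\tau^+(P)$ exactly as the paper does. Like the paper, you only consider centers $k\geq0$, but negative centers cause no trouble when $r>0$: if all $\A_j(P;k)$ were nonnegative at some $k<0$, the same persistence corollary would put $0$ in $\mathcal N(P)$.
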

\begin{proof}
By \eqref{eq:real-positivity-ray}, simultaneous Taylor nonnegativity holds
at precisely the nonnegative real centers in $[r,\infty)$.  The least
nonnegative integer in this interval is $\lceil r\rceil$, proving the
formula for $\tau(P)$.
At the center $r$, all coefficients are nonnegative.  If $\delta>0$, the
translation formula gives
\[\A_j(P;r+\delta)=\sum_{\ell=j}^{d}\binom{\ell}{j}\delta^{\ell-j}\A_\ell(P;r).
\]
Every summand is nonnegative, while the summand with $\ell=d$ is
\[\binom{d}{j}\delta^{d-j}\A_d(P;r)=\binom{d}{j}\delta^{d-j}c_d>0.
\]
Thus every coefficient is positive at every center greater than $r$.

Suppose that $r>0$ and that all coefficients were positive at $r$.  Since the finitely many coefficient functions
$k\mapsto\A_j(P;k)$ are continuous, they would remain positive on an interval $(r-\varepsilon,r+\varepsilon)$ for some $\varepsilon>0$. After replacing $\varepsilon$ by $\min\{\varepsilon,r/2\}$, this would give simultaneous nonnegativity at the nonnegative center $r-\varepsilon$, contradicting the definition of $r$.  Consequently, if $r>0$, at least one coefficient vanishes at $r$.  The asserted cases for $\tau^+(P)$ now follow by taking the least integer greater than $r$, with the separate endpoint possibility at $r=0$.
\end{proof}

\subsection{The derivative-root envelope}

For a nonconstant polynomial $p\in\R[t]$ with positive leading coefficient,
define
\[\lambda_{\R}(p):=\max\{x\in\R:p(x)=0\}.
\]
If $p$ has no real zero, we set $\lambda_{\R}(p)=-\infty$.  We use the same
convention for a positive constant polynomial.

\begin{definition}\label{def:derivative-root-envelope}
Let $P$ be a $d$-dimensional lattice polytope, where $d\geq1$.  Its
\emph{derivative-root envelope} is
\begin{equation}\label{eq:derivative-root-envelope}
 \mathcal R(P):=\max_{0\leq j<d}\lambda_{\R}\!\left(\A_j(P;\,\cdot\,)\right)=\max_{0\leq j<d}\lambda_{\R}\!\left(L_P^{(j)}\right).
\end{equation}
Multiplication by the positive constant $1/j!$ does not change the zeros, which explains the second equality.
\end{definition}

Every polynomial $k\mapsto\A_j(P;k)$ has degree $d-j$ and positive leading coefficient $\binom{d}{j}c_d$.
In particular, $\A_{d-1}(P;k)$ is a nonconstant linear polynomial, so $\mathcal R(P)$ is a finite real number.

\begin{theorem}\label{thm:root-envelope}
For every lattice polytope $P$ of positive dimension, we have 
\begin{equation}\label{eq:rho-root-envelope}
\rho(P)=\max\{0,\mathcal R(P)\}.
\end{equation}
More precisely, on the whole real line one has
\begin{equation}\label{eq:absolute-monotonicity-ray-whole-line}
\left\{k\in\R:\A_j(P;k)\geq0\text{ for every }0\leq j\leq d\right\}=[\mathcal R(P),\infty).
\end{equation}
At $k=\mathcal R(P)$ at least one coefficient vanishes, and all coefficients are positive for $k>\mathcal R(P)$.
\end{theorem}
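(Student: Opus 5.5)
The plan is to prove the whole-line statement \eqref{eq:absolute-monotonicity-ray-whole-line} first and then deduce \eqref{eq:rho-root-envelope} by intersecting with $\R_{\geq0}$. Set $R=\mathcal R(P)$ and $S=\{k\in\R:\A_j(P;k)\geq0 \text{ for all } j\}$. We will show $S=[R,\infty)$ in two inclusions.

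\textbf{Step 1: $[R,\infty)\subseteq S$, with strict positivity for $k>R$.} Fix $j<d$. The polynomial $k\mapsto\A_j(P;k)$ has positive leading coefficient $\binom dj c_d$. By definition of $R$, it has no real zero in $(R,\infty)$. By the intermediate value theorem it therefore has constant sign on $(R,\infty)$, and that sign is positive because the polynomial tends to $+\infty$. Continuity then gives $\A_j(P;R)\geq0$. For $j=d$ we have $\A_d=c_d>0$ identically. Hence $[R,\infty)\subseteq S$, and all coefficients are positive for $k>R$. Since $R$ is finite (it is attained by some real zero), this also shows that at $k=R$ the index $j_0$ realizing the maximum in \eqref{eq:derivative-root-envelope} satisfies $\A_{j_0}(P;R)=0$. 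This gives the vanishing claim.

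\textbf{Step 2: $S\subseteq[R,\infty)$.} This is the step needing care. Suppose $k_0\in S$. We apply the translation formula, \cref{thm:translation-semigroup}, exactly as in \cref{cor:positivity-persistence}, but now on the whole real line. The argument there uses only the nonnegativity of the coefficients at $k_0$ and $c_d>0$; it never uses $k_0\geq0$. It shows that $\A_j(P;k)>0$ for every $k>k_0$ and every $j$. So no $\A_j(P;\cdot)$ with $j<d$ has a zero in $(k_0,\infty)$, which gives $\lambda_\R(\A_j)\leq k_0$ for each $j<d$ and hence $R\leq k_0$. Thus $S=[R,\infty)$.

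\textbf{Step 3: deducing the formula for $\rho(P)$.} By definition $\mathcal N(P)=S\cap\R_{\geq0}=[R,\infty)\cap[0,\infty)=[\max\{0,R\},\infty)$. Comparing with \eqref{eq:real-positivity-ray} gives $\rho(P)=\max\{0,\mathcal R(P)\}$.

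The only subtle point is in Step 2. One must not invoke \cref{cor:positivity-persistence} literally, since it is phrased for the setting of nonnegative centers. Instead we note that its proof is valid for arbitrary real $k_0$.
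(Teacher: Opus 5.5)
Your proof is correct and follows essentially the same route as the paper. The paper uses the sign of each $\A_j(P;\cdot)$ to the right of its largest real zero for one inclusion, and the translation formula for the other; it phrases the second inclusion as a contradiction at $y=\lambda_{\R}(\A_j(P;\cdot))>k$ rather than as positivity on all of $(k_0,\infty)$, and your caution there is unnecessary since \cref{cor:positivity-persistence} is in fact stated for arbitrary real $k_0$.
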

\begin{proof}
Fix an index $j<d$.  Because $\A_j(P;k)$ has positive leading coefficient, it is positive for all
sufficiently large $k$.  It has no real zero larger than $\lambda_{\R}(\A_j(P;\,\cdot\,))$, and hence
\[\A_j(P;k)>0\qquad\text{for every }k>\lambda_{\R}(\A_j(P;\,\cdot\,)).
\]
Indeed, its sign cannot change on the zero-free interval to the right of its largest real zero, and that sign is positive near infinity.  If
$\lambda_{\R}(\A_j(P;\,\cdot\,))=\mathcal R(P)$, then $\A_j(P;\mathcal R(P))=0$; if
$\lambda_{\R}(\A_j(P;\,\cdot\,))<\mathcal R(P)$, then $\A_j(P;\mathcal R(P))>0$.  Consequently,
\[\A_j(P;\mathcal R(P))\geq0\qquad(0\leq j<d).\]
The final coefficient is the positive constant $\A_d(P;k)=c_d$.  Thus all shifted coefficients are nonnegative at $\mathcal R(P)$.  The translation formula \eqref{eq:translation-semigroup} then shows that they are positive at every $k>\mathcal R(P)$.

It remains to prove that no $k<\mathcal R(P)$ can have all shifted coefficients nonnegative.  Suppose, to the contrary, that
\[\A_\ell(P;k)\geq0\qquad(0\leq\ell\leq d)
\]
for some $k<\mathcal R(P)$.  By the definition of $\mathcal R(P)$, there is an index $j<d$ for which
\[y:=\lambda_{\R}(\A_j(P;\,\cdot\,))>k.
\]
Apply the translation formula with displacement $y-k>0$:
\[\A_j(P;y)=\sum_{\ell=j}^{d}\binom{\ell}{j}(y-k)^{\ell-j}\A_\ell(P;k).
\]
Every summand is nonnegative.  Moreover, the summand with $\ell=d$ is positive:
\[\binom{d}{j}(y-k)^{d-j}\A_d(P;k)=\binom{d}{j}(y-k)^{d-j}c_d>0.
\]
It follows that $\A_j(P;y)>0$.  This contradicts the definition of $y$, according to which $\A_j(P;y)=0$.  Therefore the set in
\eqref{eq:absolute-monotonicity-ray-whole-line} is exactly $[\mathcal R(P),\infty)$.

Intersecting this ray with $\R_{\geq0}$ gives $\mathcal N(P)=[\max\{0,\mathcal R(P)\},\infty)$, and comparison with \eqref{eq:real-positivity-ray} proves \eqref{eq:rho-root-envelope}.  Finally, the maximum in
\eqref{eq:derivative-root-envelope} is attained by at least one index $j<d$, so at least one coefficient vanishes at $\mathcal R(P)$.
\end{proof}

The theorem also determines $\tau(P)$ and $\tau^+(P)$ exactly.

\begin{corollary}\label{cor:integer-center-root-formula}
We have 
\begin{equation}\label{eq:tau-root-formula}
 \tau(P)=\left\lceil\max\{0,\mathcal R(P)\}\right\rceil
\end{equation}
and
\begin{equation*}
 \tau^+(P)=
 \begin{cases}
  0,&\mathcal R(P)<0,\\[2mm]
  \lfloor \mathcal R(P)\rfloor+1,&\mathcal R(P)\geq0.
 \end{cases}
\end{equation*}
Consequently, $\tau(P)\neq\tau^+(P)$ precisely when either $\mathcal R(P)=0$ or $\mathcal R(P)$ is a positive integer.
\end{corollary}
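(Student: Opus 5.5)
The plan is to derive both formulas directly from \cref{thm:root-envelope} and \cref{prop:natural-rounding}. Throughout, write $R=\mathcal R(P)$. \cref{thm:root-envelope} gives $\rho(P)=\max\{0,R\}$, and \cref{prop:natural-rounding} gives $\tau(P)=\lceil\rho(P)\rceil$. Together these immediately yield \eqref{eq:tau-root-formula}.

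For $\tau^+(P)$, we use the whole-line description \eqref{eq:absolute-monotonicity-ray-whole-line} together with the last sentence of \cref{thm:root-envelope}. These show that the set of real centers at which all coefficients are positive is exactly the open ray $(R,\infty)$. Indeed, at $R$ itself some coefficient vanishes, below $R$ some coefficient is negative, and above $R$ all coefficients are positive. Hence $\tau^+(P)$ is the least nonnegative integer strictly greater than $R$.

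We then split into cases.
\begin{enumerate}
\item If $R<0$, then $0>R$, so $\tau^+(P)=0$.
\item If $R\geq0$, the least integer strictly exceeding $R$ is $\lfloor R\rfloor+1$, and it is automatically nonnegative.
\end{enumerate}
This gives the displayed case formula. The only care needed is that \cref{prop:natural-rounding} is phrased via $\rho$, which loses information when $R<0$. This is why the open-ray description on the whole line, rather than $\rho$, should be the tool for $\tau^+$.

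For the final assertion, we compare the two formulas.
\begin{enumerate}
\item If $R<0$, then $\tau=\lceil 0\rceil=0=\tau^+$.
\item If $R\geq0$, then $\tau=\lceil R\rceil$ and $\tau^+=\lfloor R\rfloor+1$. These agree exactly when $R\notin\Z$, and differ by one when $R$ is a nonnegative integer, that is, when $R=0$ or $R\in\Z_{>0}$.
\end{enumerate}
No step is a genuine obstacle. The one thing to verify carefully is the boundary case $R=0$: there $\tau=0$ while $\tau^+=1$, because some coefficient vanishes at the center $0$.
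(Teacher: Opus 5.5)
Your proposal is correct and follows essentially the paper's argument. The paper gets $\tau$ from the ray $[\max\{0,\mathcal R(P)\},\infty)$ of nonnegative centers in Theorem~\ref{thm:root-envelope}; you route it through $\rho(P)$ and Proposition~\ref{prop:natural-rounding}, which is equivalent. Both then take $\tau^+$ to be the least nonnegative integer in the open ray $(\mathcal R(P),\infty)$ of positive centers, and your explicit comparison of $\lceil\mathcal R(P)\rceil$ with $\lfloor\mathcal R(P)\rfloor+1$ correctly supplies the final ``consequently'' clause, which the paper leaves implicit.
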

\begin{proof}
By \cref{thm:root-envelope}, Taylor nonnegativity holds at a real center $k\geq0$ exactly when
\[ k\geq\max\{0,\mathcal R(P)\}.
\]
The least integer satisfying this inequality is the ceiling in \eqref{eq:tau-root-formula}.

When $\mathcal R(P)<0$, every coefficient at $0$ is positive by \cref{thm:root-envelope}; hence $\tau^+(P)=0$.  Suppose instead that $\mathcal R(P)\geq0$.  At $\mathcal R(P)$ at least one coefficient vanishes, whereas every
coefficient is positive at every real center larger than $\mathcal R(P)$. Thus the least integral center of positivity is the least integer larger than $\mathcal R(P)$, namely $\lfloor \mathcal R(P)\rfloor+1$.
\end{proof}

\subsection{Comparison with the roots of the Ehrhart polynomial}

The real-rootedness of the Ehrhart polynomial and the real-rootedness of the $h^*$-polynomial are distinct properties.  Recent examples illustrating the subtle relationships among Ehrhart positivity, Ehrhart roots, and $h^*$-real-rootedness are surveyed by Ferroni and Higashitani \cite{FerroniHigashitani2024}.

For a nonconstant polynomial $p\in\mathbb C[t]$, define its spectral abscissa by
\[\beta(p):=\max\{\Re(\zeta):p(\zeta)=0\},
\]
where $\Re(\zeta)$ denotes the real part of $\zeta$. The classical Gauss--Lucas theorem says that the zeros of $p'$ lie in the
convex hull of the zeros of $p$; see, for example, \cite{RahmanSchmeisser2002}.  We recall a short proof in the form needed here.

\begin{lemma}[Gauss--Lucas]\label{lem:gauss-lucas}
Let $p(t)=a\prod_{\nu=1}^{n}(t-\alpha_\nu)$, $a\neq0$, where the roots are repeated according to multiplicity.  Every zero of $p'(t)$ belongs to the convex hull of $\alpha_1,\ldots,\alpha_n$.  Consequently,
\[\beta\!\left(p^{(j)}\right)\leq\beta(p)\qquad(0\leq j<n).
\]
\end{lemma}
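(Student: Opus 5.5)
The plan is to prove the Gauss--Lucas statement by the standard logarithmic-derivative argument, and then obtain the bound on $\beta(p^{(j)})$ by induction on $j$.

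\textbf{Step 1: identify the zeros of $p'$.} Let $w$ be a zero of $p'$. If $p(w)=0$, then $w$ equals some $\alpha_\nu$ and there is nothing to prove. Otherwise, differentiate the factorization logarithmically:
\[
\frac{p'(w)}{p(w)}=\sum_{\nu=1}^{n}\frac{1}{w-\alpha_\nu}=0.
\]
Taking complex conjugates and using $\frac{1}{\overline{w-\alpha_\nu}}=\frac{w-\alpha_\nu}{|w-\alpha_\nu|^2}$ gives
\[
\sum_{\nu}\frac{w-\alpha_\nu}{|w-\alpha_\nu|^2}=0.
\]
Setting $\mu_\nu=|w-\alpha_\nu|^{-2}>0$ yields $w=\sum_\nu\mu_\nu\alpha_\nu\big/\sum_\nu\mu_\nu$. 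This is a convex combination of the $\alpha_\nu$, which proves the first assertion.

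\textbf{Step 2: compare real parts.} The function $\Re$ is real-linear, so every point of the convex hull of $\alpha_1,\ldots,\alpha_n$ has real part at most $\max_\nu\Re(\alpha_\nu)=\beta(p)$. For $n\geq2$, $p'$ is nonconstant of degree $n-1$, and Step 1 gives $\beta(p')\leq\beta(p)$.

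\textbf{Step 3: iterate.} For $0\leq j<n$, the derivative $p^{(j)}$ has degree $n-j\geq1$, so it is nonconstant and has nonzero leading coefficient. It also factors over $\mathbb C$, so Step 2 applies to it whenever $n-j\geq2$. Induction on $j$ then gives
\[
\beta(p^{(j)})\leq\beta(p^{(j-1)})\leq\cdots\leq\beta(p).
\]

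The only delicate point is the requirement $j<n$, which keeps every derivative in the chain nonconstant so that $\beta$ is defined. No real obstacle is expected.
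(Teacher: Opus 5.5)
Your proposal is correct and follows essentially the same route as the paper: the logarithmic derivative at a zero of $p'$ that is not a zero of $p$, conjugation to obtain $w$ as a positively weighted average of the $\alpha_\nu$, and iteration to higher derivatives. The only difference is that you spell out the real-part comparison and the degree bookkeeping for $j<n$, which the paper leaves implicit.
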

\begin{proof}
Let $\zeta$ be a zero of $p'(t)$.  If $\zeta$ is also a zero of $p(t)$, then it is one of the $\alpha_\nu$ and there is nothing to prove.  Otherwise, logarithmic differentiation gives
\[0=\frac{p'(\zeta)}{p(\zeta)} =\sum_{\nu=1}^{n}\frac1{\zeta-\alpha_\nu}.
\]
Since
\[\frac1{\zeta-\alpha_\nu}=\frac{\overline{\zeta}-\overline{\alpha_\nu}}{|\zeta-\alpha_\nu|^2},
\]
we obtain
\[
 \overline{\zeta}
 \sum_{\nu=1}^{n}\frac1{|\zeta-\alpha_\nu|^2}
 =
 \sum_{\nu=1}^{n}
 \frac{\overline{\alpha_\nu}}{|\zeta-\alpha_\nu|^2}.
\]
Taking complex conjugates yields
\[
 \zeta
 =
 \frac{\displaystyle
       \sum_{\nu=1}^{n}
       \frac{\alpha_\nu}{|\zeta-\alpha_\nu|^2}}
      {\displaystyle
       \sum_{\nu=1}^{n}
       \frac1{|\zeta-\alpha_\nu|^2}}.
\]
This is a convex combination of the roots $\alpha_\nu$, because all the weights are positive.  Iterating the assertion gives the result for every higher derivative.
\end{proof}

\begin{theorem}\label{thm:rho-root-sandwich}
We have 
\begin{equation}\label{eq:rho-root-sandwich}
\max\{0,\lambda_{\R}(L_P)\}\leq \rho(P)\leq \max\{0,\beta(L_P)\}.
\end{equation}
If $L_P(t)$ is real-rooted, then both bounds coincide and
\begin{equation}\label{eq:rho-real-rooted}
 \rho(P)=\max\{0,\lambda_{\R}(L_P)\}.
\end{equation}
\end{theorem}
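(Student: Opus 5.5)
We deduce both inequalities in \eqref{eq:rho-root-sandwich} from \cref{thm:root-envelope}, which gives $\rho(P)=\max\{0,\mathcal R(P)\}$ with $\mathcal R(P)=\max_{0\leq j<d}\lambda_{\R}(L_P^{(j)})$.

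For the lower bound, the index $j=0$ appears in the maximum defining $\mathcal R(P)$, so $\mathcal R(P)\geq\lambda_{\R}(L_P)$. Taking the maximum with $0$ on both sides gives $\max\{0,\lambda_{\R}(L_P)\}\leq\rho(P)$. This holds even when $L_P$ has no real root, since then the left side is $0$.

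For the upper bound, fix $0\leq j<d$. If $L_P^{(j)}$ has no real zero, its contribution is $-\infty$ and there is nothing to check. Otherwise, every real zero $x$ of $L_P^{(j)}$ is in particular a complex zero. By \cref{lem:gauss-lucas}, applied to $p=L_P$ with $n=d$ (legitimate because $\deg L_P=d$ and $j<d$), we get
\[
x=\Re(x)\leq\beta\!\left(L_P^{(j)}\right)\leq\beta(L_P).
\]
Hence $\lambda_{\R}(L_P^{(j)})\leq\beta(L_P)$ for every $j<d$, so $\mathcal R(P)\leq\beta(L_P)$, and therefore $\rho(P)\leq\max\{0,\beta(L_P)\}$.

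If $L_P$ is real-rooted, all its zeros are real, so the zero of largest real part is the largest real zero and $\beta(L_P)=\lambda_{\R}(L_P)$. The two bounds then coincide, which gives \eqref{eq:rho-real-rooted}.

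The only delicate point is the bookkeeping in the Gauss--Lucas step. For this we use the form of \cref{lem:gauss-lucas} stated for all higher derivatives, which the lemma obtains by iterating the first-derivative statement.
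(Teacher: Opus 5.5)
Your proof is correct and follows the paper's argument for both inequalities. The $j=0$ term of $\mathcal R(P)$ gives the lower bound, and Gauss--Lucas gives $\mathcal R(P)\leq\beta(L_P)$. In the real-rooted case the paper instead uses Rolle's theorem to get $\lambda_{\R}(L_P)\geq\lambda_{\R}(L_P')\geq\cdots\geq\lambda_{\R}(L_P^{(d-1)})$, hence $\mathcal R(P)=\lambda_{\R}(L_P)$; your observation that $\beta(L_P)=\lambda_{\R}(L_P)$ collapses the sandwich directly, which is slightly more economical and literally shows that the two bounds coincide.
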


\begin{proof}
The index $j=0$ occurs in the maximum defining $\mathcal R(P)$, so $\lambda_{\R}(L_P)\leq\mathcal R(P)$.
This gives the first inequality after applying \cref{thm:root-envelope}.
By \cref{lem:gauss-lucas}, every root of every derivative $L_P^{(j)}(t)$ lies in the convex hull of the roots of $L_P(t)$.  In particular, every real root $x$ of $L_P^{(j)}(t)$ satisfies $x=\Re(x)\leq\beta(L_P)$.
It follows that $\mathcal R(P)\leq\beta(L_P)$, proving the second inequality.
Suppose now that $L_P(t)$ is real-rooted.  Rolle's theorem implies that the largest zero of $L_P^{(j+1)}(t)$ is at most the largest zero of $L_P^{(j)}(t)$.  Hence
\[\lambda_{\R}(L_P)\geq\lambda_{\R}(L_P')\geq\cdots\geq\lambda_{\R}(L_P^{(d-1)}).
\]
Therefore $\mathcal R(P)=\lambda_{\R}(L_P)$, and \eqref{eq:rho-real-rooted} follows from \cref{thm:root-envelope}.
\end{proof}

\begin{corollary}\label{cor:left-half-plane}
If every zero of $L_P(t)$ has negative real part, then $\rho(P)=\tau(P)=\tau^+(P)=0$.
Equivalently, $P$ is Taylor positive at every real center $k\geq0$.
\end{corollary}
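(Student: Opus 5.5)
The plan is to use the Gauss--Lucas bound already packaged in \cref{thm:rho-root-sandwich} and then pass from $\rho(P)$ to the integer indices. Assume every zero of $L_P(t)$ has negative real part. Then $\beta(L_P)<0$, so \eqref{eq:rho-root-sandwich} gives $0\leq\rho(P)\leq\max\{0,\beta(L_P)\}=0$, hence $\rho(P)=0$.

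Next I will obtain strict positivity at $0$, which the value of $\rho(P)$ alone does not give. By \cref{lem:gauss-lucas}, every zero of every derivative $L_P^{(j)}$ with $0\leq j<d$ lies in the convex hull of the zeros of $L_P$. That convex hull sits in the open left half-plane, so every real zero of each $L_P^{(j)}$ is negative. Consequently $\mathcal R(P)<0$: it is a maximum of finitely many numbers, each either $-\infty$ or a negative real, and the term with $j=d-1$ is finite. Now \cref{cor:integer-center-root-formula} yields $\tau^+(P)=0$ and $\tau(P)=\lceil\max\{0,\mathcal R(P)\}\rceil=0$. Alternatively, \cref{thm:root-envelope} states directly that all coefficients are positive for $k>\mathcal R(P)$, which includes $k=0$.

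For the equivalent formulation, \cref{thm:root-envelope} gives positivity of every $\A_j(P;k)$ for all $k>\mathcal R(P)$, and therefore for all $k\geq0$. The converse direction of the stated equivalence I would read as a restatement rather than a true logical equivalence. If literal equivalence is intended, it fails: Taylor positivity at all $k\geq0$ only says $\mathcal R(P)<0$, and that does not force every zero of $L_P$ into the left half-plane. So I will prove only the forward implication and note that the word ``equivalently'' refers to the consequence.

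The only delicate step is the strictness $\mathcal R(P)<0$ as opposed to $\rho(P)=0$. This is why I route the argument through Gauss--Lucas applied to each derivative, and not only through the sandwich inequality, which by itself gives only $\rho(P)=0$.
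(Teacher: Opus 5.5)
Your proof is correct and takes essentially the same route as the paper. Gauss--Lucas gives $\mathcal R(P)\leq\beta(L_P)<0$, the sandwich inequality gives $\rho(P)=0$, \cref{cor:integer-center-root-formula} gives $\tau(P)=\tau^+(P)=0$, and positivity at every $k\geq0$ follows because all coefficients are positive for $k>\mathcal R(P)$; your citation of \cref{thm:root-envelope} for that last step is the more precise one. The ``Equivalently'' clause only relates the two forms of the conclusion, both of which you prove, so your aside about the converse is harmless but unnecessary.
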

\begin{proof}
The hypothesis says $\beta(L_P)<0$. 
By Gauss–Lucas lemma, $\mathcal{R}(P) \leq \beta(L_P) < 0$. Hence \cref{thm:rho-root-sandwich} gives $\rho(P) = 0$, and \cref{cor:integer-center-root-formula} yields $\tau(P) = \tau^+(P) = 0$. Moreover, \cref{thm:rho-root-sandwich} shows that all Taylor coefficients are positive for every $k \geq 0$.
\end{proof}

For real-rooted $L_P(t)$, Ehrhart positivity has a simple root-theoretic interpretation.

\begin{corollary}\label{cor:real-rooted-equivalences}
Suppose $L_P(t)$ is real-rooted.  The following conditions are equivalent:
\begin{enumerate}
 \item[(i)] $P$ is Ehrhart positive;
 \item[(ii)] $\rho(P)=0$;
 \item[(iii)] $\tau^+(P)=0$;
 \item[(iv)] every zero of $L_P(t)$ is negative.
\end{enumerate}
\end{corollary}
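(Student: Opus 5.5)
We prove the cycle (i)$\Rightarrow$(iv)$\Rightarrow$(ii)$\Rightarrow$(iii)$\Rightarrow$(i), relying mainly on \cref{thm:rho-root-sandwich} and \cref{cor:integer-center-root-formula}. Throughout, $L_P(t)$ is real-rooted, so \eqref{eq:rho-real-rooted} gives $\rho(P)=\max\{0,\lambda_{\R}(L_P)\}$. The proof of \cref{thm:rho-root-sandwich} also gives $\mathcal R(P)=\lambda_{\R}(L_P)$.

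For (i)$\Rightarrow$(iv), note that Ehrhart positivity means $\A_j(P;0)=c_j>0$ for all $j$. By \cref{cor:root-free} with $k=0$, $L_P$ has no real root in $[0,\infty)$, because $L_P(0)=1>0$. Since $L_P$ is real-rooted, every zero is therefore negative.

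For (iv)$\Rightarrow$(ii), every zero negative and real means $\beta(L_P)=\lambda_{\R}(L_P)<0$. Then \cref{cor:left-half-plane} gives $\rho(P)=0$, and in fact $\tau^+(P)=0$ directly.

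For (ii)$\Rightarrow$(iii), the hypothesis $\rho(P)=0$ alone is not enough. By \cref{prop:natural-rounding}, $\tau^+(P)$ could be $1$ if some $\A_j(P;0)$ vanishes. This is the main obstacle: we must exclude $\mathcal R(P)=0$, that is, exclude $\lambda_{\R}(L_P)=0$. But $0$ cannot be a root, since $L_P(0)=1\neq0$. So $\rho(P)=0$ forces $\max\{0,\lambda_{\R}(L_P)\}=0$, hence $\lambda_{\R}(L_P)\le 0$, and with $L_P(0)\neq 0$ we get $\mathcal R(P)=\lambda_{\R}(L_P)<0$. Then \cref{cor:integer-center-root-formula} yields $\tau^+(P)=0$.

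Finally, (iii)$\Rightarrow$(i) is immediate from the definitions, since $\tau^+(P)=0$ means $\A_j(P;0)=c_j>0$ for every $j$.

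Real-rootedness is used only in (i)$\Rightarrow$(iv), to exclude nonreal roots, and in (ii)$\Rightarrow$(iii), to identify $\mathcal R(P)$ with $\lambda_{\R}(L_P)$. The second use is the delicate point, because in general a derivative $L_P^{(j)}$ could vanish at $0$ even though $L_P(0)\neq0$.
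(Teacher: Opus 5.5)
Your proof is correct, but you go around the cycle in the opposite direction from the paper, and this moves the work to a different place. The paper proves (iv)$\Rightarrow$(i)$\Rightarrow$(iii)$\Rightarrow$(ii)$\Rightarrow$(iv):
\begin{itemize}
\item (i)$\Rightarrow$(iii) and (iii)$\Rightarrow$(ii) are trivial from the definitions.
\item (ii)$\Rightarrow$(iv) is exactly your observation that \eqref{eq:rho-real-rooted} together with $L_P(0)=1$ forces $\lambda_{\R}(L_P)<0$.
\item (iv)$\Rightarrow$(i) is the elementary remark that $c_d\prod_{\nu}(t+a_\nu)$ with all $a_\nu>0$ has positive coefficients.
\end{itemize}
In that ordering, the subtlety you flag (that $\rho(P)=0$ alone does not give $\tau^+(P)=0$) never has to be faced. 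The paper also never needs $\mathcal R(P)=\lambda_{\R}(L_P)$, \cref{cor:root-free}, \cref{cor:left-half-plane}, or \cref{cor:integer-center-root-formula}.

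Your route uses more machinery, but it does make explicit why (ii)$\Rightarrow$(iii) is the delicate direction. By \cref{cor:integer-center-root-formula}, those two conditions differ exactly when $\mathcal R(P)=0$. Real-rootedness, via Gauss--Lucas or Rolle, is what keeps every derivative $L_P^{(j)}$ from vanishing at $0$.

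Your (ii)$\Rightarrow$(iii) step can also be shortened. It already passes through (iv), since you derive $\lambda_{\R}(L_P)<0$, and you yourself noted that (iv) gives $\tau^+(P)=0$ via \cref{cor:left-half-plane}. So it could simply read (ii)$\Rightarrow$(iv)$\Rightarrow$(iii).
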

\begin{proof}
If all zeros are negative, then $L_P(t)=c_d\prod_{\nu=1}^{d}(t+a_\nu)$ with $a_\nu>0$, so every monomial coefficient is positive.  Thus (iv) implies (i), and (i) plainly implies (iii).  Condition (iii) implies (ii).
Finally, if $\rho(P)=0$, then \eqref{eq:rho-real-rooted} gives $\lambda_{\R}(L_P)\leq0$.  Since $L_P(0)=1$, zero is not a root.
Therefore every root is negative, proving that (ii) implies (iv).
\end{proof}

\section{Properties of Taylor coefficients}\label{Section-Prop-Taylor}

This section mainly explores various properties of the Taylor coefficients $\A_{j}(P;k)$ as polynomials in $k$.

\subsection{Interlacing property}

We first state exactly what remains true without real-rootedness.

\begin{proposition}\label{prop:unconditional-rolle}
Fix $0\leq j<d$, and let $x_1<x_2<\cdots<x_r$ be the distinct real zeros of $k\mapsto\A_j(P;k)$, with respective multiplicities $m_1,\ldots,m_r$.  Then:
\begin{enumerate}
 \item[(i)] if $m_i\geq2$, then $x_i$ is a zero of $\A_{j+1}(P;\,\cdot\,)$ of multiplicity $m_i-1$;
 \item[(ii)] for every $1\leq i<r$, the polynomial $\A_{j+1}(P;\,\cdot\,)$ has at least one zero in $(x_i,x_{i+1})$.
\end{enumerate}
\end{proposition}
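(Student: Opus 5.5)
The plan is to reduce everything to the derivative relation \eqref{eq:coefficient-ode} from \cref{thm:translation-semigroup}. Write $f(k):=\A_j(P;k)$. Then
\[
f'(k)=(j+1)\A_{j+1}(P;k).
\]
Since $j+1\geq1$ is a nonzero constant, the zeros of $\A_{j+1}(P;\,\cdot\,)$ coincide with those of $f'$, with the same multiplicities. Both statements are therefore standard facts about a real polynomial and its derivative. Note that $f$ has degree $d-j\geq1$ with positive leading coefficient $\binom dj c_d$, so $f$ is not identically zero and $f'$ is a nonzero polynomial when $d-j\geq2$. When $d-j=1$, $f$ is linear and has exactly one simple zero, so part (i) is vacuous and part (ii) has no pairs $i<r$.

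For part (i), I factor $f(k)=(k-x_i)^{m_i}g(k)$ with $g(x_i)\neq0$. Differentiating gives
\[
f'(k)=(k-x_i)^{m_i-1}\bigl(m_i g(k)+(k-x_i)g'(k)\bigr).
\]
The bracket evaluates to $m_i g(x_i)\neq0$ at $k=x_i$. Hence $x_i$ is a zero of $f'$ of multiplicity exactly $m_i-1$, which is $\geq1$ when $m_i\geq2$. Transferring this to $\A_{j+1}$ via the constant factor $j+1$ gives (i).

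For part (ii), $f$ is a real polynomial, so it is continuous on $[x_i,x_{i+1}]$ and differentiable on the open interval, with $f(x_i)=f(x_{i+1})=0$. Rolle's theorem then produces $\xi\in(x_i,x_{i+1})$ with $f'(\xi)=0$, so $\A_{j+1}(P;\xi)=0$.

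I do not expect a real obstacle here. The only care needed is to state the multiplicity claim in (i) precisely as exactly $m_i-1$, using the factorization above rather than merely noting that $f'$ vanishes at $x_i$, and to make sure the degenerate linear case $j=d-1$ is handled as vacuous.
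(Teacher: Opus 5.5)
Your proposal is correct and follows essentially the same route as the paper: you reduce to $f'=(j+1)\A_{j+1}(P;\cdot)$, factor $f(k)=(k-x_i)^{m_i}g(k)$ to get multiplicity exactly $m_i-1$, and apply Rolle's theorem on each $[x_i,x_{i+1}]$. Your separate remark on the linear case $j=d-1$ is a small extra check that the paper leaves implicit.
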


\begin{proof}
Set $p(k)=\A_j(P;k)$.  The differential equation $p'(k)=(j+1)\A_{j+1}(P;k)$ shows that $p'(k)$ and $\A_{j+1}(P;k)$ have the same zeros with the same multiplicities. 
If $x_i$ is a zero of $p(k)$ of multiplicity $m_i$, write
\[p(k)=(k-x_i)^{m_i}q(k),\qquad q(x_i)\neq0.
\]
Differentiation gives
\[ p'(k)=(k-x_i)^{m_i-1}\bigl(m_iq(k)+(k-x_i)q'(k)\bigr).
\]
The factor in parentheses has value $m_iq(x_i)\neq0$ at $x_i$, so $p'(k)$ has multiplicity exactly $m_i-1$ there.  This proves (i).
Statement (ii) is Rolle's theorem applied on each interval $[x_i,x_{i+1}]$.
\end{proof}

\begin{theorem}\label{thm:derivative-interlacing}
Fix $0\leq j< d-1$.
Suppose $\A_j(P;\,\cdot\,)$ has only real zeros.  Then $\A_{j+1}(P;\,\cdot\,)$ has only real zeros and its zeros weakly
interlace those of $\A_j(P;\,\cdot\,)$.  If all zeros of $\A_j(P;\,\cdot\,)$ are simple, the interlacing is strict.
In particular, if $L_P$ is real-rooted and $\alpha_{j,1}\leq\alpha_{j,2}\leq\cdots\leq\alpha_{j,d-j}$ 
are the zeros of $\A_j(P;\,\cdot\,)$, repeated according to multiplicity, then
\[\alpha_{j,1}\leq\alpha_{j+1,1}\leq\alpha_{j,2}\leq\cdots\leq\alpha_{j+1,d-j-1}\leq\alpha_{j,d-j}.
\]
\end{theorem}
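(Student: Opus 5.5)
The plan is to reduce everything to the classical Rolle argument, using the identity \eqref{eq:coefficient-ode}. Set $p(k)=\A_j(P;k)$. It has degree $n:=d-j\geq2$ and positive leading coefficient $\binom dj c_d$, and $p'(k)=(j+1)\A_{j+1}(P;k)$. So $\A_{j+1}(P;\,\cdot\,)$ has exactly the same zeros as $p'$, with the same multiplicities. Its degree is $n-1$. It therefore suffices to show that $p'$ has $n-1$ real zeros, counted with multiplicity, and that they sit in the interlacing pattern.

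\textbf{Counting the zeros of $p'$.} Let $x_1<\cdots<x_r$ be the distinct zeros of $p$, with multiplicities $m_1,\ldots,m_r$. By hypothesis $\sum m_i=n$. Proposition~\ref{prop:unconditional-rolle} gives two sources of zeros of $p'$:
\begin{itemize}
\item part (i) shows that each $x_i$ is a zero of $p'$ of multiplicity $m_i-1$;
\item part (ii) gives at least one zero $y_i\in(x_i,x_{i+1})$ for each $1\leq i<r$.
\end{itemize}
The $y_i$ lie in pairwise disjoint open intervals and are distinct from every $x_i$. The total count is therefore at least $\sum(m_i-1)+(r-1)=n-1=\deg p'$. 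Hence these are all the zeros of $p'$, with exact multiplicities, and in particular all of them are real. Equality in the count also shows that each open interval $(x_i,x_{i+1})$ contains exactly one zero, and that it is simple. No zero of $p'$ lies outside $[x_1,x_r]$.

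\textbf{The interlacing.} List the zeros of $p$ with multiplicity as $\alpha_{j,1}\leq\cdots\leq\alpha_{j,n}$. A cluster of $m_i$ equal values $x_i$ contributes $m_i-1$ copies of $x_i$ to the list of zeros of $p'$. Between consecutive clusters sits exactly one $y_i$. Merging the two sorted lists gives
\[\alpha_{j,1}\leq\alpha_{j+1,1}\leq\alpha_{j,2}\leq\cdots\leq\alpha_{j+1,n-1}\leq\alpha_{j,n}.\]
The verification is an index bookkeeping check: the $t$-th zero of $p'$ lies between the $t$-th and $(t+1)$-st zero of $p$. Inside a cluster this holds with equalities; the $y_i$ fill the gaps between clusters.

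If all $m_i=1$, there are no repeated zeros. Then $r=n$ and the zeros of $p'$ are exactly the $y_i\in(x_i,x_{i+1})$, which gives strict interlacing.

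\textbf{The case $L_P$ real-rooted.} Here $\A_0(P;\,\cdot\,)=L_P$ is real-rooted. Induction on $j$, applying the above at each step, shows that every $\A_j(P;\,\cdot\,)$ is real-rooted for $0\leq j<d$. This yields the displayed chain for each $j<d-1$.

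The only real obstacle is the multiplicity bookkeeping in the counting step. The degree count must be exact so that no zeros are missed or lie outside $[x_1,x_r]$. The rest follows mechanically from Proposition~\ref{prop:unconditional-rolle}.
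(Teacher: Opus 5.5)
Your proposal is correct and follows essentially the same argument as the paper. Both use \eqref{eq:coefficient-ode} together with both parts of Proposition~\ref{prop:unconditional-rolle} to find at least $\sum_i(m_i-1)+(r-1)=d-j-1=\deg p'$ real zeros of $p'$, then conclude by the degree count. Your index bookkeeping for the interlacing chain and the induction for real-rooted $L_P$ simply spell out steps the paper leaves implicit.
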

\begin{proof}
Write the distinct zeros of $p(k)=\A_j(P;k)$ as $x_1<\cdots<x_r$, with multiplicities $m_1,\ldots,m_r$.  Since $p(k)$ is real-rooted, $m_1+\cdots+m_r=d-j$.
By \cref{prop:unconditional-rolle}, the derivative has roots of total multiplicity
\[\sum_{i=1}^{r}(m_i-1)=d-j-r
\]
at the points $x_i$, and at least one further root in each of the $r-1$ open intervals $(x_i,x_{i+1})$.  Thus we have accounted for at least $(d-j-r)+(r-1)=d-j-1$ real zeros of $p'(k)$, counted with multiplicity.  Since $\deg p'(k)=d-j-1$, these are all of its zeros.  Hence $p'(k)$ is real-rooted, with exactly the stated weak interlacing.  If the roots of $p(k)$ are simple, then $r=d-j$, no
root is inherited at an endpoint, and the $d-j-1$ derivative zeros lie strictly inside the $d-j-1$ gaps.
This completes the proof.
\end{proof}

It is sometimes useful to isolate the point at which the derivative chain
becomes real-rooted.
The \emph{hyperbolicity index} of $P$ is
\[\eta(P):=\min\{j\in\{0,\ldots,d-1\}:L_P^{(j)}(t)\text{ is real-rooted}\}.
\]
This index is well defined because $L_P^{(d-1)}(t)$ is linear.

\begin{corollary}\label{cor:interlacing-tail}
For every $\eta(P)\leq j\leq d-1$, the polynomial $L_P^{(j)}(t)$ is real-rooted,  and the zeros in the chain
\[L_P^{(\eta(P))}(t),L_P^{(\eta(P)+1)}(t),\ldots,L_P^{(d-1)}(t)
\]
interlace successively.  In particular,
\[\lambda_{\R}(L_P^{(\eta(P))})\geq\lambda_{\R}(L_P^{(\eta(P)+1)})\geq\cdots\geq\lambda_{\R}(L_P^{(d-1)}).
\]
\end{corollary}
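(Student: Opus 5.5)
The plan is to apply \cref{thm:derivative-interlacing} inductively along the derivative chain, starting at the index $\eta(P)$. The first step is to observe that $\A_j(P;k)=L_P^{(j)}(k)/j!$ by \eqref{eq:formula-derivative}. Multiplication by the positive constant $1/j!$ does not change zeros or their multiplicities. Hence real-rootedness of $L_P^{(j)}$ is the same as real-rootedness of $k\mapsto\A_j(P;k)$, and the same holds for interlacing statements.

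Next comes an induction on $j$ running from $\eta(P)$ to $d-1$. The base case $j=\eta(P)$ is real-rooted by the definition of $\eta(P)$. For the step, suppose $L_P^{(j)}$ is real-rooted with $j<d-1$. Then \cref{thm:derivative-interlacing}, applied at index $j$, gives two conclusions: $\A_{j+1}(P;\cdot\,)$, and hence $L_P^{(j+1)}$, is real-rooted, and its zeros weakly interlace those of $L_P^{(j)}$. The endpoint $L_P^{(d-1)}$ is linear with positive leading coefficient $d!\,c_d$, so it is automatically real-rooted; this is also what guarantees that $\eta(P)$ is well defined. Carrying the induction through shows that every member of the chain is real-rooted and that consecutive members interlace.

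Finally, the ordering of the largest zeros is read off from the interlacing pattern. With the zeros listed increasingly, the last inequality of \cref{thm:derivative-interlacing} gives
\[
\alpha_{j+1,d-j-1}\leq\alpha_{j,d-j},
\]
that is, $\lambda_{\R}(L_P^{(j+1)})\leq\lambda_{\R}(L_P^{(j)})$. One could also get this directly from Rolle's theorem, since the derivative of a real-rooted polynomial cannot have a zero beyond the polynomial's largest zero. Chaining these inequalities produces the displayed monotone sequence.

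There is no real obstacle here; the only care needed is bookkeeping. \cref{thm:derivative-interlacing} is stated for $j<d-1$, which is exactly the range the induction step requires. Every polynomial in the chain has degree $d-j\geq1$, so $\lambda_{\R}$ is a finite real number throughout and the $-\infty$ convention never enters.
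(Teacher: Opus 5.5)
Your proposal is correct and matches the paper's proof, which is just an inductive application of \cref{thm:derivative-interlacing} starting at $j=\eta(P)$. Your extra bookkeeping — passing between the zeros of $\A_j(P;\cdot\,)$ and $L_P^{(j)}$, and reading $\lambda_{\R}(L_P^{(j+1)})\leq\lambda_{\R}(L_P^{(j)})$ off the weak interlacing or Rolle's theorem — only makes explicit what the paper leaves implicit.
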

\begin{proof}
Apply \cref{thm:derivative-interlacing} inductively, starting with $j=\eta(P)$.
\end{proof}

We now consider the five-dimensional simplex $\Delta_{3,19}$ from \cref{sec:sharp-family,sec:shift-one}.

\begin{example}\label{prop:q19-noninterlacing}
For the simplex $\Delta_{3,19}$, by \cref{prop:d5-family}, we have 
\[L_{\Delta_{3,19}}(t)=\frac{19}{120}t^5+\frac18t^4-\frac1{24}t^3+\frac{15}{8}t^2+\frac{173}{60}t+1.
\]
The polynomial $L_{\Delta_{3,19}}$ itself has no nonnegative real zero.
Through a tedious computation, one proves that $\A_2(\Delta_{3,19};k)=\frac{38k^3+18k^2-3k+45}{24}$ has exactly one real root.
It follows that $\rho(\Delta_{3,19})=\frac{\sqrt{74}-6}{38}>0$ and
\[\lambda_{\R}\!\left(\A_2(\Delta_{3,19};\,\cdot\,)\right)<0<\lambda_{\R}\!\left(\A_3(\Delta_{3,19};\,\cdot\,)\right).
\]
Thus the largest real zeros of successive coefficient functions need not form a decreasing sequence when the preceding coefficient polynomial is not real-rooted.
\end{example}

\subsection{Laguerre inequalities and Newton inequalities}

The classical Laguerre inequality states that if a real polynomial $p$ is real-rooted, then
\[p'(x)^2-p(x)p''(x)\geq0\qquad(x\in\R).
\]
We include its short proof because its translation into shifted Ehrhart coefficients is especially transparent.

\begin{lemma}[Laguerre inequality, \cite{Laguerre}]\label{lem:laguerre}
Let $p\in\R[t]$ be a nonconstant real polynomial with only real zeros.  Then
\[p'(x)^2-p(x)p''(x)\geq0 \qquad(x\in\R).
\]
If all zeros of $p$ are simple, the inequality is strict for every real $x$.
\end{lemma}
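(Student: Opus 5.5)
The plan is to use the logarithmic-derivative identity and compute the second derivative of $\log p$ on the complement of the zero set. Write $p(t)=a\prod_{\nu=1}^{n}(t-\alpha_\nu)$ with $a\neq0$, all $\alpha_\nu\in\R$, repeated according to multiplicity. For real $x$ that is not a zero of $p$, logarithmic differentiation gives
\[
\frac{p'(x)}{p(x)}=\sum_{\nu=1}^{n}\frac{1}{x-\alpha_\nu}.
\]
Differentiating once more gives
\[
\frac{p''(x)p(x)-p'(x)^2}{p(x)^2}=-\sum_{\nu=1}^{n}\frac{1}{(x-\alpha_\nu)^2}.
\]
Multiplying by $-p(x)^2$ yields
\[
p'(x)^2-p(x)p''(x)=p(x)^2\sum_{\nu=1}^{n}\frac{1}{(x-\alpha_\nu)^2}.
\]
This is strictly positive, because $p(x)\neq0$ and $n\geq1$. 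So on the open dense set where $p(x)\neq0$, the inequality holds, and in fact holds strictly.

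It remains to treat the points where $p(x)=0$. At such an $x$, the quantity reduces to $p'(x)^2\geq0$, so the weak inequality holds everywhere. (Alternatively, one could argue by continuity from the complement of the zero set.)

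For the strict statement under simple zeros, observe that a simple zero $x=\alpha_\nu$ has $p'(x)\neq0$. Hence $p'(x)^2-p(x)p''(x)=p'(x)^2>0$ there. Combined with the strict positivity already shown off the zero set, the inequality is strict at every real $x$.

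The only point needing care is this case split at the zeros. The identity above cannot be evaluated at the $\alpha_\nu$, so those points must be handled directly rather than by passing to a limit; a limit would only yield the weak inequality. Everything else is a routine computation.
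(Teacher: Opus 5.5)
Your proof is correct and is essentially the paper's argument: the same logarithmic-derivative identity $p'(x)^2-p(x)p''(x)=p(x)^2\sum_{\nu}(x-\alpha_\nu)^{-2}$ off the zero set, plus the value $p'(\alpha_\nu)^2>0$ at simple zeros. The only difference is that you get the weak inequality at a zero by direct evaluation ($p(x)=0$ leaves $p'(x)^2\geq0$), whereas the paper uses continuity; this is immaterial.
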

\begin{proof}
Write $p(x)=a\prod_{\nu=1}^{n}(x-\alpha_\nu)$, $\alpha_\nu\in\R$, with multiplicities included. 
Away from the zeros of $p(x)$, we have 
\[\frac{p'(x)}{p(x)}=\sum_{\nu=1}^{n}\frac1{x-\alpha_\nu}.
\]
Differentiating gives
\[\frac{p''(x)p(x)-(p'(x))^2}{p(x)^2}= \left(\frac{p'}p\right)'(x)=-\sum_{\nu=1}^{n}\frac1{(x-\alpha_\nu)^2}.
\]
Consequently,
\[ p'(x)^2-p(x)p''(x) = p(x)^2\sum_{\nu=1}^{n}\frac1{(x-\alpha_\nu)^2}>0
\]
whenever $p(x)\neq0$.  The left-hand side is a polynomial and hence is continuous, so the nonnegative inequality extends to the zeros of $p(x)$. At a simple zero $\alpha_\nu$, it has value $p'(\alpha_\nu)^2>0$. Thus it is everywhere strict when all roots are simple.
\end{proof}

\begin{theorem}\label{thm:laguerre-turan-shifted}
Fix $0\leq j\leq d-2$.  If the polynomial $k\mapsto\A_j(P;k)$ is real-rooted, then, for every $k\in\R$,
\begin{equation}\label{eq:laguerre-turan-A}
(j+1)\A_{j+1}(P;k)^2\geq(j+2)\A_j(P;k)\A_{j+2}(P;k).
\end{equation}
If $\A_j(P;\,\cdot\,)$ has only simple zeros, the inequality is strict.
\end{theorem}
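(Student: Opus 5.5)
The plan is to apply \cref{lem:laguerre} to the polynomial $p(k)=\A_j(P;k)$ and then rewrite $p'$ and $p''$ through the coefficient differential equation \eqref{eq:coefficient-ode} of \cref{thm:translation-semigroup}.

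Since $0\leq j\leq d-2$, the polynomial $p(k)=\A_j(P;k)$ has degree $d-j\geq2$ and leading coefficient $\binom dj c_d$. In particular it is nonconstant, so the Laguerre lemma applies under the real-rootedness hypothesis. The lemma gives
\[
p'(k)^2-p(k)p''(k)\geq0 \qquad (k\in\R).
\]

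Next, by \eqref{eq:coefficient-ode} we have $p'(k)=(j+1)\A_{j+1}(P;k)$. Applying the same identity once more, now with index $j+1<d$, gives
\[
p''(k)=(j+1)(j+2)\A_{j+2}(P;k).
\]
Substituting these into the Laguerre inequality yields
\[
(j+1)^2\A_{j+1}(P;k)^2\geq(j+1)(j+2)\A_j(P;k)\A_{j+2}(P;k).
\]
Dividing by the positive number $j+1$ gives exactly \eqref{eq:laguerre-turan-A}.

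For the strict case, \cref{lem:laguerre} gives strict inequality at every real $k$ when all zeros of $p$ are simple, and the same division preserves strictness.

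There is no real obstacle here. The only points to check are the index bookkeeping, namely that $j+2\leq d$ so that $\A_{j+2}$ is defined and the second application of \eqref{eq:coefficient-ode} is legitimate, and that $p$ is genuinely nonconstant. Both are guaranteed by $j\leq d-2$.
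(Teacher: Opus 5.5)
Your proof is correct and follows the paper's argument exactly: apply \cref{lem:laguerre} to $p(k)=\A_j(P;k)$, rewrite $p'$ and $p''$ using \eqref{eq:coefficient-ode}, and divide by $j+1>0$. You also check explicitly that $j\leq d-2$ makes $p$ nonconstant and makes the second use of the coefficient ODE legitimate, a point the paper leaves implicit.
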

\begin{proof}
Apply \cref{lem:laguerre} to $p(k)=\A_j(P;k)$.  The coefficient
differential equations give
\[ p'(k)=(j+1)\A_{j+1}(P;k)\]
and
\[ p''(k)=(j+1)(j+2)\A_{j+2}(P;k).
\]
Substitution into $p'(k)^2-p(k)p''(k)\geq0$ yields
\[(j+1)^2\A_{j+1}(P;k)^2-(j+1)(j+2)\A_j(P;k)\A_{j+2}(P;k)\geq0.
\]
Division by $j+1>0$ proves \eqref{eq:laguerre-turan-A}.
\end{proof}

This inequality can be written explicitly as a quadratic inequality in the $h^*$-vector.  Define
\begin{equation*}
\Phi_j(P;k):=\sum_{r=0}^{d}h_r^*e_{d-j}(k-r+1,k-r+2,\ldots,k-r+d).
\end{equation*}
By \eqref{eq:formula-elementary}, we have 
\begin{equation}\label{eq:Phi-A-relation}
 \Phi_j(P;k)=d!\A_j(P;k).
\end{equation}

\begin{corollary}\label{cor:laguerre-hstar}
Under the hypotheses of \cref{thm:laguerre-turan-shifted}, we have 
\begin{equation*}
(j+1)\Phi_{j+1}(P;k)^2\geq(j+2)\Phi_j(P;k)\Phi_{j+2}(P;k) \qquad(k\in\R).
\end{equation*}
Equivalently,
\begin{small}
\begin{align*}
&(j+1)\left(\sum_{r=0}^{d}h_r^* e_{d-j-1}(k-r+1,\ldots,k-r+d)\right)^2\\ &\quad\geq(j+2)\left(\sum_{r=0}^{d}h_r^*e_{d-j}(k-r+1,\ldots,k-r+d)\right)
 \times\left(\sum_{r=0}^{d}h_r^*e_{d-j-2}(k-r+1,\ldots,k-r+d)\right).
\end{align*}
\end{small}
\end{corollary}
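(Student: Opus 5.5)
This follows at once from \cref{thm:laguerre-turan-shifted} by rescaling with the identity \eqref{eq:Phi-A-relation}. I would first note that the hypotheses are the same: $j$ fixed with $0\leq j\leq d-2$, and $k\mapsto\A_j(P;k)$ real-rooted. Then \eqref{eq:laguerre-turan-A} holds for every real $k$.

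Next I would substitute $\A_i(P;k)=\Phi_i(P;k)/d!$ for $i=j,j+1,j+2$, which is valid by \eqref{eq:Phi-A-relation}. Both sides of \eqref{eq:laguerre-turan-A} are homogeneous of degree two in the $\A$'s. Each side therefore picks up the same factor $1/(d!)^2$. Multiplying by $(d!)^2>0$ preserves the inequality and gives
\[(j+1)\Phi_{j+1}(P;k)^2\geq(j+2)\Phi_j(P;k)\Phi_{j+2}(P;k).\]

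For the equivalent expanded form, I would write out the definition of $\Phi_i$ at the three indices. The elementary symmetric degree $d-i$ becomes $d-j$, $d-j-1$ and $d-j-2$, all evaluated at the arguments $(k-r+1,\ldots,k-r+d)$. This is exactly the displayed inequality. The index $d-j-2\geq0$ is legitimate because $j\leq d-2$. If the strict version is wanted under simple zeros, it transfers in the same way from the strict part of \cref{thm:laguerre-turan-shifted}.

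There is no real obstacle. The only points to check are that the scaling constant is positive and appears squared on both sides, and that the index ranges match those in \cref{thm:laguerre-turan-shifted}.
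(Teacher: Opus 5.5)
Your proposal is correct and is essentially the paper's own proof: multiply the inequality of \cref{thm:laguerre-turan-shifted} by $(d!)^2>0$ and substitute $\Phi_m(P;k)=d!\,\A_m(P;k)$ from \eqref{eq:Phi-A-relation}. Your remarks on the index ranges ($d-j-2\geq0$ since $j\leq d-2$) and on carrying over the strict version under simple zeros are accurate additions.
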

\begin{proof}
Multiply \eqref{eq:laguerre-turan-A} by $(d!)^2$ and use \eqref{eq:Phi-A-relation}. This completes the proof.
\end{proof}

Inequalities among the coefficients of Ehrhart polynomials have also been extensively studied; see, e.g., \cite{BeckEtAl2005,Betke-McMullen,Hibi1995,Stanley1980,Stanley-Cohen-Macau}.
Some inequalities on $h^*$-polynomials are found in \cite[Chapter 10.4]{BeckRobins2015}.

A stronger coefficient inequality is available when the Ehrhart polynomial itself is real-rooted.  We use the classical Newton
inequalities: if $y_1,\ldots,y_d\geq0$ and $1\leq r\leq d-1$, then the normalized elementary symmetric functions satisfy
\[\left(\frac{e_r(y_1,\ldots,y_d)}{\binom dr}\right)^2\geq\frac{e_{r-1}(y_1,\ldots,y_d)}{\binom d{r-1}} \frac{e_{r+1}(y_1,\ldots,y_d)}{\binom d{r+1}}.
\]
See, for example, \cite{Branden2015,RahmanSchmeisser2002}.

\begin{theorem}\label{thm:newton-shifted}
Suppose $L_P(t)$ is real-rooted.  Then, for every $k\geq\rho(P)$ and every $1\leq j\leq d-1$, we have 
\begin{equation}\label{eq:newton-normalized-A}
\left(\frac{\A_j(P;k)}{\binom dj}\right)^2\geq\frac{\A_{j-1}(P;k)}{\binom d{j-1}}\frac{\A_{j+1}(P;k)}{\binom d{j+1}}.
\end{equation}
Equivalently,
\begin{equation}\label{eq:newton-factor-A}
\A_j(P;k)^2\geq\frac{(j+1)(d-j+1)}{j(d-j)}\A_{j-1}(P;k)\A_{j+1}(P;k).
\end{equation}
\end{theorem}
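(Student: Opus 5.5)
The plan is to write the shifted polynomial in factored form and apply the classical Newton inequalities to its roots after translation. Since $L_P(t)$ is real-rooted with positive leading coefficient $c_d$, write $L_P(t)=c_d\prod_{\nu=1}^{d}(t-\alpha_\nu)$ with all $\alpha_\nu\in\R$. Substituting $t=k+x$ gives
\[
L_P(k+x)=c_d\prod_{\nu=1}^{d}\bigl(x+(k-\alpha_\nu)\bigr)=c_d\sum_{j=0}^{d}e_{d-j}(y_1,\ldots,y_d)\,x^j,\qquad y_\nu:=k-\alpha_\nu .
\]
By uniqueness of the expansion \eqref{eq:shifted-expansion}, this yields
\[
\A_j(P;k)=c_d\,e_{d-j}(y_1,\ldots,y_d).
\]

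Next I would check that the $y_\nu$ are nonnegative. By \cref{thm:rho-root-sandwich}, real-rootedness gives $\rho(P)=\max\{0,\lambda_{\R}(L_P)\}\geq\alpha_\nu$ for every $\nu$. Hence $k\geq\rho(P)$ implies $y_\nu=k-\alpha_\nu\geq0$. This is the only place the hypothesis $k\geq\rho(P)$ is used, and it is exactly what the Newton inequalities quoted above require.

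Then I apply Newton's inequality with index $r=d-j$. Since $1\leq j\leq d-1$, we have $1\leq r\leq d-1$, and the inequality reads
\[
\Bigl(\tfrac{e_{d-j}}{\binom{d}{d-j}}\Bigr)^2\geq\tfrac{e_{d-j+1}}{\binom{d}{d-j+1}}\cdot\tfrac{e_{d-j-1}}{\binom{d}{d-j-1}}.
\]
Multiplying both sides by $c_d^2>0$ and using the symmetry $\binom{d}{d-i}=\binom{d}{i}$ gives \eqref{eq:newton-normalized-A}. Here $e_{d-j+1}$ corresponds to $\A_{j-1}$ and $e_{d-j-1}$ corresponds to $\A_{j+1}$.

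For the equivalent form \eqref{eq:newton-factor-A}, I multiply through by $\binom dj^2$ and compute the binomial ratios:
\[
\frac{\binom dj}{\binom d{j-1}}=\frac{d-j+1}{j},\qquad \frac{\binom dj}{\binom d{j+1}}=\frac{j+1}{d-j}.
\]
Their product is $\frac{(j+1)(d-j+1)}{j(d-j)}$, which is the stated factor.

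I don't expect any real obstacle. The main point to get right is the index bookkeeping $j\leftrightarrow d-j$ in matching Taylor coefficients to elementary symmetric functions. It is also worth remarking that nonnegativity of the $y_\nu$ matters for the version of Newton's inequality quoted in the paper, though it actually holds for arbitrary real $y_\nu$.
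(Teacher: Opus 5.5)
Your proof is correct and follows the paper's argument essentially verbatim: you factor $L_P(k+x)=c_d\prod_{\nu}(x+y_\nu)$ with $y_\nu=k-\alpha_\nu\geq 0$, identify $\A_j(P;k)=c_d\,e_{d-j}(y_1,\ldots,y_d)$, and apply Newton's inequality with $r=d-j$ before converting the binomial ratios. Your closing remark is also right. Newton's inequalities hold for arbitrary real $y_\nu$, so the inequality in fact holds for every real $k$ once $L_P$ is real-rooted, and the hypothesis $k\geq\rho(P)$ is only needed to match the nonnegative version of Newton's inequality quoted in the paper.
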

\begin{proof}
Let the real zeros of $L_P(t)$ be $\alpha_1,\ldots,\alpha_d$, repeated according to multiplicity.  By \eqref{eq:rho-real-rooted}, we have $k\geq\rho(P)\geq\max_\nu\alpha_\nu$.
Set $y_\nu=k-\alpha_\nu\geq0$.  Factoring at the center $k$ gives
\[L_P(k+x)=c_d\prod_{\nu=1}^{d}(x+y_\nu).
\]
The coefficient of $x^j$ in this product is $\A_j(P;k)=c_de_{d-j}(y_1,\ldots,y_d)$.
Apply Newton's inequality with $r=d-j$.  Since $\binom d{d-j}=\binom dj$, reversing the index gives precisely \eqref{eq:newton-normalized-A}. Finally,
\[\frac{\binom dj^2}{\binom d{j-1}\binom d{j+1}}=\frac{(j+1)(d-j+1)}{j(d-j)},
\]
which proves \eqref{eq:newton-factor-A}.
\end{proof}

We also obtain the following inequality concerning the coefficients of the $h^*$-polynomial.

\begin{corollary}\label{cor:newton-hstar}
Under the hypotheses of \cref{thm:newton-shifted}, for $k\geq\rho(P)$ and $1\leq j\leq d-1$, we have 
\begin{equation*}
\binom d{j-1}\binom d{j+1}\Phi_j(P;k)^2\geq\binom dj^2\Phi_{j-1}(P;k)\Phi_{j+1}(P;k).
\end{equation*}
\end{corollary}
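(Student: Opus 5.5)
This corollary is a direct rescaling of \cref{thm:newton-shifted}. I would start from the relation \eqref{eq:Phi-A-relation}, $\Phi_j(P;k)=d!\,\A_j(P;k)$, which holds for every index $0\leq j\leq d$ and every real $k$. This is just \eqref{eq:formula-elementary} multiplied through by $d!$.

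Fix $k\geq\rho(P)$ and $1\leq j\leq d-1$. \cref{thm:newton-shifted} gives the normalized inequality \eqref{eq:newton-normalized-A}. I will clear denominators by multiplying both sides by the positive quantity $\binom dj^2\binom d{j-1}\binom d{j+1}$. The result is
\[\binom d{j-1}\binom d{j+1}\A_j(P;k)^2\geq\binom dj^2\A_{j-1}(P;k)\A_{j+1}(P;k).\]
Since all binomial coefficients involved are positive for $1\leq j\leq d-1$, the direction of the inequality is preserved.

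Next I multiply both sides by $(d!)^2>0$. Then I substitute $d!\,\A_i(P;k)=\Phi_i(P;k)$ for $i=j-1,j,j+1$. Each side of the inequality is homogeneous of degree two in the $\A$'s, so the factor $(d!)^2$ is absorbed exactly, and this yields the claimed inequality for $\Phi$.

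There is no real obstacle. The only points to check are that every multiplier is strictly positive and that the degree-two homogeneity makes the $d!$ factors match on both sides.
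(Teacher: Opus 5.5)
Your proposal is correct and follows the same route as the paper: substitute $\Phi_m(P;k)=d!\,\A_m(P;k)$ into \eqref{eq:newton-normalized-A} and clear the positive binomial denominators. Your checks are the right ones: the multipliers are positive, and both sides are homogeneous of degree two, so the factor $(d!)^2$ cancels.
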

\begin{proof}
Substitute $\Phi_m(P;k)=d!\A_m(P;k)$ into \eqref{eq:newton-normalized-A} and clear the positive denominators.
\end{proof}

\subsection{Products, dilations, and palindromicity}\label{Subsection-PDP-prob}

Let $P\subset \mathbb{R}^p$ and $Q\subset \mathbb{R}^q$ be lattice polytopes of dimensions $p$ and $q$. 
Then the \emph{Cartesian product} of $P$ and $Q$ is the convex polytope defined by
$$P\times Q\;=\;\left\{(x,y)\in\mathbb{R}^{p+q}\bigm| x\in P,\; y\in Q\right\}.$$
Then $P\times Q$ is a lattice polytope of dimension $p+q$.
Moreover, its Ehrhart polynomial is given by $L_{P\times Q}(t)= L_P(t)\cdot  L_Q(t)$.
By $\relint\bigl(m(P\times Q)\bigr)=\relint(mP)\times\relint(mQ)$, we have
$\codeg(P\times Q)=\max\{\codeg(P),\codeg(Q)\}$.
Hence, 
\[\deg h_{P\times Q}^* =p+q+1-\max\{p+1-\deg h_P^*,q+1-\deg h_Q^*\}.
\]

\begin{proposition}\label{prop:natural-products}
Let $P$ and $Q$ be lattice polytopes of dimensions $p$ and $q$.  For every
real $k$ and every $0\leq j\leq p+q$, we have 
\begin{equation}\label{eq:natural-product-convolution}
\A_j(P\times Q;k)=\sum_{\substack{a+b=j\\0\leq a\leq p\\0\leq b\leq q}}\A_a(P;k)\A_b(Q;k).
\end{equation}
Consequently,
\begin{small}
\begin{align*}
\rho(P\times Q)\leq\max\{\rho(P),\rho(Q)\},\ \ 
\tau(P\times Q)\leq\max\{\tau(P),\tau(Q)\},\ \ 
\tau^+(P\times Q)\leq\max\{\tau^+(P),\tau^+(Q)\}.
\end{align*}
\end{small}
\end{proposition}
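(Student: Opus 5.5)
The convolution formula \eqref{eq:natural-product-convolution} comes directly from the product rule $L_{P\times Q}(t)=L_P(t)L_Q(t)$ and uniqueness of the expansion in powers of $t-k$. Fix $k\in\R$ and write $x=t-k$. By \cref{def:taylor-positive},
\[L_P(k+x)=\sum_{a=0}^{p}\A_a(P;k)x^a,\qquad L_Q(k+x)=\sum_{b=0}^{q}\A_b(Q;k)x^b.\]
Multiplying these two finite sums and collecting the coefficient of $x^j$ gives exactly the right-hand side of \eqref{eq:natural-product-convolution}. Since $P\times Q$ has dimension $p+q$, its Taylor coefficients at $k$ are the unique coefficients of $L_{P\times Q}(k+x)$ in powers of $x$. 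Comparing the two expansions proves the identity. Alternatively, one can use \eqref{eq:formula-derivative} together with the Leibniz rule for $(L_PL_Q)^{(j)}(k)/j!$.

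For the index inequalities, I would use the fact that the convolution preserves nonnegativity and positivity.

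First, set $k_0=\max\{\rho(P),\rho(Q)\}$. By \eqref{eq:real-positivity-ray}, both $P$ and $Q$ are Taylor nonnegative at every $k\geq k_0$ with $k\geq0$. Every summand in \eqref{eq:natural-product-convolution} is then a product of nonnegative numbers, so $\A_j(P\times Q;k_0)\geq0$ for all $j$. Hence $k_0\in\mathcal N(P\times Q)$, and therefore $\rho(P\times Q)\leq k_0$.

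The same argument at the integer center $\max\{\tau(P),\tau(Q)\}$ handles $\tau$. Here I would invoke \cref{cor:positivity-persistence} to know that each factor polytope is still Taylor nonnegative at the larger of the two centers.

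For $\tau^+$, let $k_1=\max\{\tau^+(P),\tau^+(Q)\}$. Both $P$ and $Q$ are Taylor positive at $k_1$: the one attaining the maximum by definition, and the other by \cref{cor:positivity-persistence}, because positivity at a smaller integer center gives nonnegativity there and hence positivity at any strictly larger center. The only subtle point is that a sum of positive products is positive provided the index set is nonempty. For every $0\leq j\leq p+q$ there is at least one pair $(a,b)$ with $a+b=j$, $0\leq a\leq p$, $0\leq b\leq q$, for example $a=\min\{j,p\}$ and $b=j-a$. So $\A_j(P\times Q;k_1)>0$ for every $j$, which gives $\tau^+(P\times Q)\leq k_1$.

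No real obstacle arises. The only care needed is the persistence step, which ensures both factors are simultaneously nonnegative (or positive) at the common center, and the nonemptiness of the convolution index set in the positive case.
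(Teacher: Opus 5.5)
Your proposal is correct and follows essentially the paper's proof: you obtain the convolution by multiplying the two shifted expansions and comparing coefficients of $x^j$, then derive the three inequalities from the signs of the summands, \cref{cor:positivity-persistence}, and the nonemptiness of the index set in the positive case. You write out explicitly the persistence step for the factor with the smaller index, which the paper only cites, but the argument is the same.
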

\begin{proof}
For $L_{P\times Q}(t)$, writing $t=k+x$ gives
\[L_{P\times Q}(k+x)=\left(\sum_{a=0}^{p}\A_a(P;k)x^a\right)\left(\sum_{b=0}^{q}\A_b(Q;k)x^b\right).
\]
Taking the coefficient of $x^j$ proves \eqref{eq:natural-product-convolution}.
If both coefficient vectors are nonnegative at $k$, every summand in \eqref{eq:natural-product-convolution} is nonnegative.  If both vectors are positive, then for each $j$ there exists at least one pair $(a,b)$ with $a+b=j$, $0\leq a\leq p$, $0\leq b\leq q$,
and the corresponding product is positive.  By \cref{cor:positivity-persistence}, the three inequalities hold.
\end{proof}

\begin{proposition}\label{prop:natural-dilation}
Let $q\in\Z_{>0}$.  For every real $k$ and every $0\leq j\leq d$, we have 
\begin{equation}\label{eq:natural-dilation-coefficients}
 \A_j(qP;k)=q^j\A_j(P;qk).
\end{equation}
In particular,
\begin{equation}\label{eq:natural-dilation-rho}
 \rho(qP)=\frac{\rho(P)}q,
 \qquad
 \tau(qP)=\left\lceil\frac{\rho(P)}q\right\rceil.
\end{equation}
If $r=\rho(P)>0$, then $\tau^+(qP)=\lfloor\frac rq\rfloor+1$.
If $r=0$, then $\tau^+(qP)=0$ when $P$ is Taylor positive at $0$, and $\tau^+(qP)=1$ otherwise.
\end{proposition}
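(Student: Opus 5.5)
The plan is to start from the identity $L_{qP}(t)=L_P(qt)$, which holds for all positive integers $t$ because $t(qP)=(qt)P$, and hence as a polynomial identity. Substituting $t=k+x$ gives
\[
L_{qP}(k+x)=L_P(qk+qx)=\sum_{j=0}^{d}\A_j(P;qk)(qx)^j .
\]
Comparing coefficients of $x^j$, by uniqueness of the expansion in \cref{def:taylor-positive}, yields \eqref{eq:natural-dilation-coefficients}. Equivalently, one can differentiate $j$ times and use \eqref{eq:formula-derivative}: $L_{qP}^{(j)}(k)=q^jL_P^{(j)}(qk)$.

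Since $q^j>0$, the sign pattern of $(\A_j(qP;k))_j$ coincides with that of $(\A_j(P;qk))_j$. Consequently $k\in\mathcal N(qP)$ if and only if $qk\in\mathcal N(P)$, because $k\geq0$ exactly when $qk\geq0$. By \eqref{eq:real-positivity-ray},
\[
\mathcal N(qP)=\{k\geq0: qk\geq\rho(P)\}=[\rho(P)/q,\infty),
\]
so $\rho(qP)=\rho(P)/q$. The formula for $\tau(qP)$ then follows from \cref{prop:natural-rounding}, since $\tau=\lceil\rho\rceil$.

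For $\tau^+$ with $r=\rho(P)>0$, we have $\rho(qP)=r/q>0$. By \cref{prop:natural-rounding}, the set of real centers where all coefficients of $qP$ are positive is $(r/q,\infty)$. The least nonnegative integer in this set is $\lfloor r/q\rfloor+1$, whether or not $r/q$ is an integer. This is the same computation as the last two cases of \cref{prop:natural-rounding}, now merged into one formula.

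When $r=0$, we have $\rho(qP)=0$, and at the center $0$ the coefficients satisfy $\A_j(qP;0)=q^j\A_j(P;0)$. These are all positive exactly when $P$ is Taylor positive at $0$. The first two cases of \cref{prop:natural-rounding} then give $\tau^+(qP)=0$ or $1$ accordingly.

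The main obstacle is essentially bookkeeping. One must make sure that Taylor positivity at $0$ transfers under the scaling, which is immediate since the center $0$ is fixed by $k\mapsto qk$. One must also check that the case split in \cref{prop:natural-rounding} collapses to $\lfloor r/q\rfloor+1$ when $r>0$.
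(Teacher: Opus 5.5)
Your proposal is correct and follows the paper's proof: both derive \eqref{eq:natural-dilation-coefficients} from $L_{qP}(t)=L_P(qt)$ by substituting $t=k+x$ and comparing coefficients, then transfer Taylor nonnegativity from the center $qk$ to the center $k$ and read off $\rho$, $\tau$, and $\tau^+$ from \cref{prop:natural-rounding}. Your explicit checks fill in details the paper leaves implicit. These are that the last two cases of \cref{prop:natural-rounding} merge into $\lfloor r/q\rfloor+1$ when $r>0$, and that positivity at the center $0$ is unaffected by the factor $q^j$.
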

\begin{proof}
For every nonnegative integer $t$, we have $L_{qP}(t)=|(tq)P\cap\Z^d|=L_P(qt)$.
Setting $t=k+x$ yields
\[L_{qP}(k+x)=L_P(qk+qx)=\sum_{j=0}^{d}\A_j(P;qk)(qx)^j,
\]
which is \eqref{eq:natural-dilation-coefficients}.  Thus $qP$ is Taylor nonnegative at $k$ exactly when $P$ is Taylor nonnegative at $qk$. This proves \eqref{eq:natural-dilation-rho}; the formulas for $\tau$ and $\tau^+$ follow from \cref{prop:natural-rounding}.
\end{proof}

For a $d$-dimensional lattice polytope containing the origin in its interior, Hibi's palindromic theorem \cite{Hibi1992} (also see \cite[Theorem 4.6]{BeckRobins2015}) states that $P$ is reflexive if and only if $h_r^*=h_{d-r}^*$ for $0\leq r\leq d$.

\begin{theorem}\label{thm:palindromic-reflection}
Suppose $s=\deg h_P^*(z)$, $h_r^*=h_{s-r}^*$ ($0\leq r\leq s$), and set $\ell=d+1-s=\codeg(P)$.
For every $0\leq j\leq d$, we have 
\begin{equation}\label{eq:A-reflection}
\A_j(P;-\ell-k)=(-1)^{d-j}\A_j(P;k).
\end{equation}
Consequently:
\begin{enumerate}
 \item the zero multiset of $\A_j(P;\,\cdot\,)$ is invariant under $k\mapsto-\ell-k$;
 \item if $d-j$ is odd, then $\A_j\!\left(P;-\frac{\ell}{2}\right)=0$;
 \item if $\A_j(P;\,\cdot\,)$ is real-rooted and has positive degree, its smallest and largest zeros $\mu_j$ and $\lambda_j$ satisfy $\mu_j+\lambda_j=-\ell$.
\end{enumerate}
\end{theorem}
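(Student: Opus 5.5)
The plan is to establish a polynomial reflection identity for $L_P$ itself and then transfer it to the Taylor coefficients through \eqref{eq:formula-derivative}.

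\textbf{Step 1: the reflection identity for $L_P$.} We claim that
\[
L_P(-\ell-t)=(-1)^dL_P(t).
\]
By \eqref{eq:hstar-expansion},
\[
L_P(t)=\frac1{d!}\sum_{r=0}^{s}h_r^*\prod_{\nu=1}^{d}(t-r+\nu).
\]
Substitute $t\mapsto -\ell-t$. Since $\ell=d+1-s$, the $r$-th product becomes
\[
\prod_{\nu=1}^{d}(-d-1+s-t-r+\nu)=(-1)^d\prod_{\nu=1}^{d}\bigl(t+r-s+(d+1-\nu)\bigr).
\]
Reindex with $\mu=d+1-\nu$, which also runs over $1,\dots,d$. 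The product is then $(-1)^d\prod_{\mu=1}^{d}\bigl(t-(s-r)+\mu\bigr)$, which is exactly the factor attached to the index $s-r$. Relabel $r\mapsto s-r$ and use the hypothesis $h_r^*=h_{s-r}^*$. This gives $L_P(-\ell-t)=(-1)^dL_P(t)$ as a polynomial identity.

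\textbf{Step 2: transfer to the coefficients.} Differentiate the identity $j$ times in $t$. The chain rule gives
\[
(-1)^jL_P^{(j)}(-\ell-t)=(-1)^dL_P^{(j)}(t).
\]
Dividing by $j!$ and applying \eqref{eq:formula-derivative} yields \eqref{eq:A-reflection}. The only point requiring care is this sign bookkeeping.

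\textbf{Step 3: the three consequences.}
\begin{enumerate}
\item If $\A_j(P;\alpha)=0$ with multiplicity $m$, then by \eqref{eq:A-reflection} the value $-\ell-\alpha$ is also a zero. It has the same multiplicity, because the identity holds as polynomials in $k$ and the substitution $k\mapsto-\ell-k$ is an affine involution.
\item Evaluate \eqref{eq:A-reflection} at the fixed point $k=-\ell/2$. When $d-j$ is odd, we get $\A_j=-\A_j$, hence $\A_j(P;-\ell/2)=0$.
\item The map $k\mapsto-\ell-k$ is order-reversing on $\R$ and preserves the real zero multiset. It therefore sends the largest zero $\lambda_j$ to the smallest zero $\mu_j$, so $\mu_j=-\ell-\lambda_j$, that is, $\mu_j+\lambda_j=-\ell$.
\end{enumerate}

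The main obstacle is Step 1, specifically checking that the reindexing matches the product for $r$ exactly with the product for $s-r$. Everything else is formal.
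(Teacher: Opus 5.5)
Your proposal is correct and follows the paper's proof essentially step for step. The paper likewise derives $L_P(-\ell-t)=(-1)^dL_P(t)$ from \eqref{eq:hstar-expansion}, differentiates $j$ times, divides by $j!$, and reads the three consequences off the reflection identity; you additionally spell out the reindexing $\mu=d+1-\nu$ that matches the $r$-th product with the $(s-r)$-th one, which the paper leaves as a ``direct calculation.''
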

\begin{proof}
By \eqref{eq:hstar-expansion}, a direct calculation yields
\begin{equation}\label{eq:gorenstein-functional-equation}
 L_P(-\ell-t)=(-1)^dL_P(t).
\end{equation}
We obtain $(-1)^jL_P^{(j)}(-\ell-t)=(-1)^dL_P^{(j)}(t)$, 
or $L_P^{(j)}(-\ell-t)=(-1)^{d-j}L_P^{(j)}(t)$. Division by $j!$ proves \eqref{eq:A-reflection}.
If $\A_j(P;k)=0$, equation \eqref{eq:A-reflection} implies $\A_j(P;-\ell-k)=0$, with the same multiplicity.  Setting
$k=-\ell/2$ shows that $\A_j(P;-\ell/2)=0$ whenever $d-j$ is odd.
Finally, for a real-rooted coefficient polynomial the reflection sends its smallest zero to its largest zero, so $\lambda_j=-\ell-\mu_j$.
\end{proof}

A theorem of Rodriguez--Villegas \cite{RodriguezVillegas2002} gives a
particularly strong bridge from the roots of $h_P^*(z)$ to the roots of
$L_P(t)$.  Its application to Ehrhart polynomials is discussed in detail
by Braun and Liu \cite{BraunLiu2021}.

\begin{lemma}[\cite{RodriguezVillegas2002}]\label{thm:RV-ehrhart}
Suppose every zero of $h_P^*(z)$ lies on the unit circle.  Set $s=\deg h_P^*(z)$ and $\ell=d+1-s$.
Then there is a degree-$s$ polynomial $Q_P(t)$ such that
\begin{equation}\label{eq:RV-factorization}
L_P(t)=\left(\prod_{m=1}^{\ell-1}(t+m)\right)Q_P(t),
\end{equation}
and every zero of $Q_P$ has real part $-\ell/2$.
\end{lemma}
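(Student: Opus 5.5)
Since every zero of $h_P^*$ lies on the unit circle, I will first show that $h_P^*$ is palindromic of degree $s$, i.e.\ $h_r^*=h_{s-r}^*$. Because $h_P^*$ has real coefficients, its zeros come in conjugate pairs, so the zero set is invariant under $z\mapsto\bar z$. On the unit circle $\bar z=1/z$, so the zero set is also invariant under $z\mapsto1/z$. Since $h_0^*=1\neq0$, zero is not a root, and the reversed polynomial $z^sh_P^*(1/z)$ has the same zeros with the same multiplicities. Hence $z^sh_P^*(1/z)=c\,h_P^*(z)$ for some constant $c$. Comparing the constant term of one side with the leading term of the other gives $c=h_s^*$. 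Comparing the other ends gives $1=c\,h_s^*$, so $c=\pm1$. The case $c=-1$ would force $h_s^*=-1<0$, which is impossible; hence $c=1$.

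I will then derive the factor $\prod_{m=1}^{\ell-1}(t+m)$ from Ehrhart--Macdonald reciprocity \eqref{thm:reciprocity}. For $1\le m\le\ell-1=\codeg(P)-1$, the interior $\relint(mP)$ contains no lattice points, so $L_P(-m)=0$. These are $\ell-1$ distinct integer roots, so
\[
L_P(t)=\prod_{m=1}^{\ell-1}(t+m)\,Q_P(t)
\]
with $\deg Q_P=d-\ell+1=s$.

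The remaining and main step is to locate the zeros of $Q_P$. I will write
\[
L_P(t)=\sum_r h_r^*\binom{t+d-r}{d}
\]
and factor out the common product. Each term with $r\le s$ contains $\prod_{m=1}^{\ell-1}(t+m)$, since $\ell-1=d-s$. This gives
\[
Q_P(t)=\frac{1}{d!}\sum_{r=0}^{s}h_r^*\prod_{j=\ell-r}^{d-r}(t+j),
\]
where every product consists of $s+1$ consecutive linear factors. After substituting $t=-\ell/2+iy$, the palindromicity of $h^*$ and the functional equation \eqref{eq:gorenstein-functional-equation} show that $Q_P$ is symmetric about the line $\Re t=-\ell/2$.

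To actually confine the zeros to that line, I intend to invoke the Rodriguez--Villegas theorem. It states that if a polynomial $h$ of degree at most $n$ has all its zeros on the unit circle, then the polynomial $H$ defined by $\sum_t H(t)z^t=h(z)/(1-z)^{n+1}$ has all its zeros on a vertical line. The cleanest route is to apply this with $n=s$ to the polynomial $\tilde L(t)$ defined by $\sum_t\tilde L(t)z^t=h_P^*(z)/(1-z)^{s+1}$. The key checks are the following.
\begin{enumerate}
\item[(i)] $\tilde L(t)=\sum_r h_r^*\binom{t+s-r}{s}$, and this should equal $d!/s!$ times $Q_P$ shifted by $\ell-1$. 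Indeed, $\prod_{j=\ell-r}^{d-r}(t+j)$ becomes $\prod_{j=1-r}^{s+1-r}(t'+j)$ with $t'=t+\ell-1$, up to indexing.
\item[(ii)] The critical line for $\tilde L$ is $\Re t'=-(s+1)/2+\dots$, and after the shift $t=t'-(\ell-1)$ it moves to $\Re t=-\ell/2$.
\end{enumerate}
I expect the hardest part to be getting this index bookkeeping right and verifying the exact form of the theorem cited from \cite{RodriguezVillegas2002}. Because the statement is attributed to \cite{RodriguezVillegas2002}, I will rely on that citation for the line property itself rather than reprove it.
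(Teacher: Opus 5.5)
Your first two steps are sound. The palindromicity argument is correct, though the line statement does not actually need it. Reciprocity gives $L_P(-m)=0$ for $1\le m\le\ell-1$, hence the factorization with $\deg Q_P=s$.

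The gap is in the step that is supposed to locate the zeros of $Q_P$. Your formula for $Q_P$ is wrong. Dividing $\binom{t+d-r}{d}=\frac1{d!}\prod_{j=1-r}^{d-r}(t+j)$ by $\prod_{m=1}^{\ell-1}(t+m)$ leaves $\prod_{j=1-r}^{0}(t+j)\prod_{j=\ell}^{d-r}(t+j)$. For $r\ge1$ this is $s$ factors with a hole of length $\ell-1$, not the consecutive block $\prod_{j=\ell-r}^{d-r}(t+j)$. Consequently check (i) is false: $Q_P$ is not a multiple of a translate of $\tilde L(t)=\sum_r h_r^*\binom{t+s-r}{s}$.

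For instance, $P=\conv(0,e_1,e_2,(2,2,3))$ has $d=3$ and $h_P^*(z)=1+z+z^2$, so $s=\ell=2$. Then $L_P(t)=\frac12(t+1)(t^2+2t+2)$, and the zeros of $Q_P$ are $-1\pm i$. By contrast, $\tilde L(t)=\frac12(3t^2+3t+2)$ has zeros $-\frac12\pm\frac{\sqrt{15}}{6}i$. The imaginary parts differ, so no translation relates the two.

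Even the line bookkeeping in (ii) cannot work. The degree-exactly-$n$ theorem puts the zeros of $\tilde L$ on $\Re t'=-\frac12$, and your shift by $\ell-1$ moves this to $\Re t=\frac12-\ell$, not $-\frac{\ell}{2}$. This already fails for the unit square, where $Q_P=t+1$ and $\tilde L=2t+1$.

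You also cannot fall back on applying the theorem with $n=d$ in the form you quote, because that form (degree at most $n$) is false. Taking $h=1$ gives $\binom{t+n}{n}$, whose zeros are $-1,\dots,-n$. What the paper invokes is the Rodriguez-Villegas statement for $\deg h=s<d$, exactly in the factored form of the lemma, as applied to Ehrhart polynomials by Braun and Liu. The paper gives no proof of its own.

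If you want a self-contained argument, the missing idea is to peel off the zeros of $h_P^*$ one at a time with the shift operator $Ef(t)=f(t-1)$.
1. Since $h_P^*(0)=1$, you have $L_P=h_P^*(E)\binom{t+d}{d}=\prod_{i=1}^{s}(1-c_iE)\binom{t+d}{d}$. Here the $c_i$ are the reciprocals of the zeros, so $|c_i|=1$, and $c_i\neq1$ because $h_P^*(1)>0$.
2. Suppose $F_j(t)=\prod_{m=1}^{d-j}(t+m)\,G_j(t)$, where $\deg G_j=j$ and all zeros of $G_j$ lie on $\Re t=-\frac{d-j+1}{2}$.
3. Then $F_j(t)-cF_j(t-1)=\prod_{m=1}^{d-j-1}(t+m)\,G_{j+1}(t)$, with $G_{j+1}(t)=(t+d-j)G_j(t)-c\,t\,G_j(t-1)$.
4. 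If $\Re t>-\frac{d-j}{2}$, then $|t+d-j|>|t|$ and $|t-\rho|>|t-1-\rho|$ for every zero $\rho$ of $G_j$. Hence $|(t+d-j)G_j(t)|>|c\,t\,G_j(t-1)|$, and the reverse strict inequality holds when $\Re t<-\frac{d-j}{2}$.
5. So every zero of $G_{j+1}$ lies on $\Re t=-\frac{d-j}{2}$, and its leading coefficient is $(1-c)\neq0$ times that of $G_j$.

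Starting from $G_0=1/d!$ and performing $s$ steps gives the lemma with $Q_P=G_s$. Its zeros lie on $\Re t=-\frac{d-s+1}{2}=-\frac{\ell}{2}$, and the argument uses neither palindromicity nor reciprocity.
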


\begin{corollary}\label{cor:unit-circle-zero-center}
If every zero of $h_P^*(z)$ lies on the unit circle, then $\rho(P)=\tau(P)=\tau^+(P)=0$.
In fact, $\A_j(P;k)>0$ ($0\leq j\leq d,\ k\geq0$).
\end{corollary}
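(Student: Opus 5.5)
The plan is to factor $L_P$ via \cref{thm:RV-ehrhart} and observe that, after shifting by $k\geq0$, every factor has nonnegative coefficients. We then multiply and check strict positivity. Write
\[L_P(t)=\left(\prod_{m=1}^{\ell-1}(t+m)\right)Q_P(t),\]
where $\deg Q_P=s$ and every zero of $Q_P$ has real part $-\ell/2$. Since $L_P$ has real coefficients and so does the product $\prod(t+m)$, the quotient $Q_P$ is real. Its leading coefficient equals $c_d>0$, because the product is monic.

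\textbf{Factoring $Q_P$.} Group the zeros of $Q_P$ as follows.
\begin{itemize}
\item Each real zero must equal $-\ell/2$ and contributes a factor $t+\ell/2$.
\item Each nonreal zero $-\ell/2\pm i\beta$ with $\beta\neq0$ occurs in a conjugate pair and contributes the quadratic factor $(t+\ell/2)^2+\beta^2$.
\end{itemize}
Thus $L_P(t)$ is $c_d$ times a product of factors of three types:
\[t+m\ (1\leq m\leq\ell-1),\qquad t+\tfrac{\ell}{2},\qquad (t+\tfrac{\ell}{2})^2+\beta^2.\]

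\textbf{Shifting.} Substitute $t=k+x$ with $k\geq0$. The factors become
\[x+(k+m),\qquad x+\left(k+\tfrac{\ell}{2}\right),\qquad x^2+(2k+\ell)x+\left(k+\tfrac{\ell}{2}\right)^2+\beta^2.\]
Since $\ell=\codeg(P)\geq1$ and $k\geq0$, we have $k+m>0$ and $k+\ell/2>0$. The quadratic factor has all three coefficients strictly positive. So every factor is a polynomial in $x$ with strictly positive coefficients in every degree from $0$ up to its degree.

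\textbf{Multiplying.} A product of polynomials, each having strictly positive coefficients in all degrees $0,\ldots,\deg$, has strictly positive coefficients in all degrees $0,\ldots$ up to the total degree. This holds because the coefficient of $x^j$ is a sum of products of nonnegative terms, and at least one of those terms is positive. Multiplying by $c_d>0$ gives $\A_j(P;k)>0$ for all $0\leq j\leq d$ and all $k\geq0$.

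\textbf{Conclusion.} Taking $k=0$ gives Taylor positivity at $0$, so $\tau^+(P)=0$, and hence $\tau(P)=0$. Since nonnegativity holds on all of $[0,\infty)$, $\rho(P)=0$. Alternatively, every zero of $L_P$ has negative real part (either $-m$ or real part $-\ell/2<0$), so \cref{cor:left-half-plane} yields the conclusion directly.

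The only point needing care is the positivity of every coefficient of the shifted quadratic factors; the computation above handles it explicitly, since the roots lie strictly in the left half-plane. I expect no real obstacle beyond correctly citing \cref{thm:RV-ehrhart} and noting $\ell\geq1$. The edge case $\ell=1$ just means the product over $m$ is empty.
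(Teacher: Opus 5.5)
Your proof is correct, but its key step differs from the paper's. The paper uses \cref{thm:RV-ehrhart} only to observe that every zero of $L_P(t)$ lies in the open left half-plane. It then cites \cref{cor:left-half-plane}, whose proof goes through the Gauss--Lucas lemma (to get $\mathcal R(P)\leq\beta(L_P)<0$) and the root-envelope theorem \cref{thm:root-envelope} (all coefficients are positive to the right of $\mathcal R(P)$).

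You instead prove positivity directly. After the shift $t=k+x$ with $k\geq0$, every real irreducible factor of $L_P$ has strictly positive coefficients, and a product of such factors stays coefficientwise positive. This is the classical fact that a real polynomial with all zeros in the open left half-plane has positive coefficients. Your details check out: $Q_P$ is real with leading coefficient $c_d$, conjugate zeros pair up, $\ell\geq1$, and the convolution argument gives strict positivity in every degree.

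Your route is self-contained and avoids both Gauss--Lucas and the derivative-root machinery. In fact, the same factorization proves \cref{cor:left-half-plane} itself: any real polynomial with positive leading coefficient is coefficientwise positive at every center $k>\beta(L_P)$. The paper's route instead places the corollary in its general framework, where the exact threshold is $\mathcal R(P)$, which can be strictly smaller than $\beta(L_P)$; that extra sharpness is not needed here. Your closing remark that one could alternatively cite \cref{cor:left-half-plane} is precisely the paper's proof.
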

\begin{proof}
The roots supplied by the product in \eqref{eq:RV-factorization} are $-1,-2,\ldots,-(\ell-1)$,
and every remaining root has real part $-\ell/2$.  Since $\ell\geq1$, all roots of $L_P(t)$ lie in the open left half-plane.  The result now follows from \cref{cor:left-half-plane}.
\end{proof}

\section{Taylor positivity in natural classes}
\label{sec:natural-classes}

We derive bounds and exact values for several natural classes, and we analyze explicit non-Ehrhart-positive examples among order polytop, matroid polytope, generalized-permutohedra, and smooth polytope.
Whenever a lattice polytope is not full-dimensional, its Ehrhart polynomial and relative interior are understood with respect to the affine lattice in its affine span. This convention is particularly important for matroid base polytopes, which lie in affine hyperplanes.

\subsection{Order polytopes}

Let $\Pi$ be a finite poset with $n$ elements.  Following Stanley
\cite{Stanley1986}, its order polytope is
\[\mathcal O(\Pi)=\left\{x\in[0,1]^\Pi:x_u\leq x_v\text{ whenever }u\leq_\Pi v\right\}.
\]
It is an $n$-dimensional lattice polytope.  If $h(\Pi)$ denotes the largest number of elements in a chain of $\Pi$, then its $h^*$-degree has a particularly simple form.

\begin{proposition}\label{prop:natural-order-codegree}
For every finite poset $\Pi$ with $n$ elements, we have $\codeg\mathcal O(\Pi)=h(\Pi)+1$ and $\deg h_{\mathcal O(\Pi)}^*=n-h(\Pi)$.
Consequently,
\[\rho(\mathcal O(\Pi))\leq\tau^+(\mathcal O(\Pi))\leq\min\left\{\max\{0,n-h(\Pi)-1\},\left\lfloor\frac {n-1}{2}\right\rfloor\right\}.
\]
\end{proposition}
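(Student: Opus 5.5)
The plan is to compute the codegree of $\mathcal O(\Pi)$ from its interior lattice points, to read off the $h^*$-degree from the relation $\deg h^*=d+1-\codeg$, and then to substitute into the general bound of \cref{cor:improved-combined-bound}.

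By the characterization recalled in the introduction, $\codeg(P)$ is the least positive integer $k$ with $\relint(kP)\cap\Z^d\neq\emptyset$. Since $\mathcal O(\Pi)$ is full-dimensional, its interior is obtained by making every facet inequality strict. The facets of $\mathcal O(\Pi)$ are $x_u\geq0$ for $u$ minimal, $x_v\leq 1$ for $v$ maximal, and $x_u\leq x_v$ for cover relations $u\lessdot v$ (Stanley). Hence
\[\relint(k\mathcal O(\Pi))\cap\Z^\Pi=\{x\in\Z^\Pi:\ 0<x_u<k\ \text{for all }u,\ x_u<x_v\ \text{whenever }u<_\Pi v\}.\]
Such an $x$ is a strictly order-preserving map $\Pi\to\{1,\dots,k-1\}$.

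\textbf{Lower bound on $k$.} Along a chain of $h(\Pi)$ elements the values strictly increase, so $k-1\geq h(\Pi)$.

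\textbf{Existence when $k=h(\Pi)+1$.} Set $x_u=$ the number of elements in the longest chain ending at $u$. This value lies in $\{1,\dots,h(\Pi)\}$ and strictly increases along $<_\Pi$, so it gives an interior point. Therefore $\codeg\mathcal O(\Pi)=h(\Pi)+1$, and consequently $\deg h^*=n+1-(h(\Pi)+1)=n-h(\Pi)$.

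For the final inequality, \cref{prop:natural-rounding} gives $\tau(P)=\lceil\rho(P)\rceil\geq\rho(P)$, and we already know $\tau\leq\tau^+$. So $\rho\leq\tau^+$, and \cref{cor:improved-combined-bound} with $d=n$ and $s=n-h(\Pi)$ yields the stated bound.

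The only step needing care is the facet description of the order polytope, which justifies replacing the non-strict inequalities by strict ones. Rather than rely on facets, I can argue directly. If some interior point had $x_u=x_v$ for a relation $u<v$, then perturbing in the direction $e_u-e_v$ would leave the polytope on one side. The same perturbation argument handles the bounds $x_u=0$ and $x_u=1$. Conversely, the set of points satisfying all the strict inequalities is open and contained in the polytope, hence lies in the interior. This avoids citing the facet structure.
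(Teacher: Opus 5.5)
Your proof is correct and follows the paper's argument essentially step for step. You use the same strict-inequality description of interior lattice points, the chain bound $k-1\geq h(\Pi)$, the longest-chain labeling for existence, and then \cref{cor:improved-combined-bound}; your perturbation argument for the interior and your remark that $\rho\leq\lceil\rho\rceil=\tau\leq\tau^+$ just make explicit steps the paper leaves implicit.
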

\begin{proof}
An integer point in $\relint(m\mathcal O(\Pi))$ is an integer-valued
function $f:\Pi\to\Z$ such that
\[0<f(u)<m\quad\text{for every }u\in\Pi,\qquad f(u)<f(v)\quad\text{whenever }u<_\Pi v.
\]
If $u_1<_\Pi u_2<_\Pi\cdots<_\Pi u_h$ is a chain with $h=h(\Pi)$ elements, then
\[ 1\leq f(u_1)<f(u_2)<\cdots<f(u_h)\leq m-1.
\]
There are only $m-1$ possible integer values, so $m\geq h+1$.
Conversely, define
\[f(v)=\max\{r:\text{there exists a chain } u_1<_\Pi\cdots<_\Pi u_r=v\}.
\]
Then $1\leq f(v)\leq h$.  If $u<_\Pi v$, a longest chain ending at
$u$ can be extended by $v$, and therefore $f(v)\geq f(u)+1$.
Thus $f$ is an interior lattice point of $(h+1)\mathcal O(\Pi)$.
This proves the codegree assertion.  Since the dimension is $n$, we have 
$\deg h_{\mathcal O(\Pi)}^*=n+1-\codeg\mathcal O(\Pi)=n-h(\Pi)$.
The bound for $\tau^+$ follows from \cref{cor:improved-combined-bound}.
\end{proof}

Disjoint unions of posets correspond to Cartesian products $\mathcal O(\Pi_1\sqcup\Pi_2)=\mathcal O(\Pi_1)\times\mathcal O(\Pi_2)$.
Thus \cref{prop:natural-products} applies directly.

\begin{corollary}\label{cor:natural-chain-unions}
Let $C_{a_i}$ be a chain with $a_i$ elements. If $\Pi=C_{a_1}\sqcup\cdots\sqcup C_{a_m}$ is a disjoint union of chains, then $\rho(\mathcal O(\Pi))=\tau(\mathcal O(\Pi))=\tau^+(\mathcal O(\Pi))=0$.
\end{corollary}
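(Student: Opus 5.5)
The plan is to reduce to a single chain via \cref{prop:natural-products}, and then compute the Ehrhart polynomial of the order polytope of a chain explicitly. Since $\mathcal O(\Pi)=\mathcal O(C_{a_1})\times\cdots\times\mathcal O(C_{a_m})$, iterating \cref{prop:natural-products} gives
\[
\rho(\mathcal O(\Pi))\leq\max_i\rho(\mathcal O(C_{a_i})) \quad\text{and}\quad \tau^+(\mathcal O(\Pi))\leq\max_i\tau^+(\mathcal O(C_{a_i})).
\]
It therefore suffices to show that $\tau^+(\mathcal O(C_a))=0$ for every $a\geq1$. The general relations $0\leq\rho\leq\tau\leq\tau^+$ then force all three invariants of $\mathcal O(\Pi)$ to vanish. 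Here $\rho\leq\tau$ holds because $\tau$ is a nonnegative center in $\mathcal N(P)=[\rho,\infty)$.

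For a single chain $C_a$, a lattice point of $t\,\mathcal O(C_a)$ is a weakly increasing sequence $0\leq f(u_1)\leq\cdots\leq f(u_a)\leq t$. Counting these gives
\[
L_{\mathcal O(C_a)}(t)=\binom{t+a}{a}=\frac{1}{a!}\prod_{\nu=1}^{a}(t+\nu).
\]
Equivalently, this polytope is a unimodular simplex with $h^*=1$, so \eqref{eq:hstar-expansion} yields the same formula. All of its roots $-1,\ldots,-a$ are negative. Hence \cref{cor:left-half-plane} applies and gives $\rho=\tau=\tau^+=0$ for $\mathcal O(C_a)$. Alternatively, one can note that the product $\prod_{\nu}(x+\nu)$ visibly has positive coefficients at $k=0$, and \cref{cor:positivity-persistence} then extends this to every $k\geq0$.

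In fact the cleanest route avoids inequalities altogether. The Ehrhart polynomial of the product is
\[
L_{\mathcal O(\Pi)}(t)=\prod_i\binom{t+a_i}{a_i},
\]
and all of its roots are negative reals. Applying \cref{cor:left-half-plane} directly to $\mathcal O(\Pi)$ therefore finishes the proof. The only step requiring any care is the lattice-point count for the chain's order polytope, and that count is elementary.
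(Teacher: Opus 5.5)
Your proof is correct and follows essentially the same route as the paper. The order polytope of a chain is a unimodular simplex with $L_{\mathcal O(C_a)}(t)=\binom{t+a}{a}$, its shifted coefficients are positive at every center $k\geq0$, and \cref{prop:natural-products} then carries this over to the disjoint union. Your closing shortcut, applying \cref{cor:left-half-plane} directly to $\prod_i\binom{t+a_i}{a_i}$, is also valid and avoids the product inequalities altogether.
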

\begin{proof}
The order polytope of a chain with $a$ elements is a unimodular
$a$-simplex and $L_{\mathcal O(C_a)}(t)=\binom{t+a}{a}=\frac1{a!}\prod_{\nu=1}^{a}(t+\nu)$.
At a nonnegative center $k$, we have 
\[ L_{\mathcal O(C_a)}(k+x)=\frac1{a!}\prod_{\nu=1}^{a}(x+k+\nu),
\]
whose coefficients are all positive. This result follows from \cref{prop:natural-products}.
\end{proof}

The sharp low-dimensional classification is now known: every order
polytope of dimension at most $13$ is Ehrhart positive
\cite[Theorem~3.1]{LiuXinZhang2026}, whereas Liu and Tsuchiya constructed
a non-Ehrhart-positive order polytope in every dimension at least $14$
\cite[Theorem~1.7]{LiuTsuchiya2019}.  Hence every order polytope of
dimension at most $13$ satisfies $\rho=\tau=\tau^+=0$.

Let $Q_n$ be the poset consisting of one minimal element covered by $n$ pairwise incomparable elements.  Liu and Tsuchiya
\cite{LiuTsuchiya2019} studied this family in detail.  We use the Bernoulli polynomials $B_m(x)$ (see \cite{Book-Bernoulli}), defined by
\[\frac{ze^{xz}}{e^z-1}=\sum_{m\geq0}B_m(x)\frac{z^m}{m!},
\]
and write $B_m=B_m(0)$.  They satisfy
\begin{equation*}
B_m'(x)=mB_{m-1}(x),\qquad B_m(x+1)-B_m(x)=mx^{m-1}.
\end{equation*}

\begin{proposition}\label{prop:natural-star-poset}
For every $n\geq1$, we have 
\[L_{\mathcal O(Q_n)}(t)=\sum_{r=1}^{t+1}r^n=\frac{B_{n+1}(t+2)-B_{n+1}}{n+1}.
\]
For $1\leq j\leq n+1$, we have 
\begin{equation}\label{eq:natural-star-coefficients}
\A_j(\mathcal O(Q_n);k)=\frac{n!}{j!(n+1-j)!}\,B_{n+1-j}(k+2),
\end{equation}
whereas
\[\A_0(\mathcal O(Q_n);k)=\frac{B_{n+1}(k+2)-B_{n+1}}{n+1}.
\]
\end{proposition}
\begin{proof}
Fix a nonnegative integer $t$.  If the coordinate of the unique minimal
element is $a\in\{0,\ldots,t\}$, each of the $n$ maximal coordinates may
be chosen independently from $\{a,a+1,\ldots,t\}$.  Hence
\[L_{\mathcal O(Q_n)}(t)=\sum_{a=0}^{t}(t-a+1)^n=\sum_{r=1}^{t+1}r^n.
\]
By \cite[Remark 4.10]{Book-Bernoulli}, we have $\sum_{r=1}^{t+1}r^n=\frac{B_{n+1}(t+2)-B_{n+1}}{n+1}$.
For $j\geq1$, differentiate $j$ times and use
\[\frac{d^j}{dt^j}B_{n+1}(t+2)=\frac{(n+1)!}{(n+1-j)!}B_{n+1-j}(t+2).
\]
By \eqref{eq:formula-derivative}, we obtain \eqref{eq:natural-star-coefficients}.
\end{proof}

\subsection{Matroid base polytopes and generalized permutohedra}

Let $M$ be a matroid on $E=[n]$, with set of bases $\mathcal B(M)$.  Its
base polytope is
\[
 P(M)=\conv\{\mathbf 1_B:B\in\mathcal B(M)\}\subset\R^n.
\]
If $M=M_1\oplus M_2$ is a direct sum on disjoint ground sets, then
\begin{equation*}
 P(M_1\oplus M_2)=P(M_1)\times P(M_2).
\end{equation*}
Thus all conclusions of \cref{prop:natural-products} apply to matroid
direct sums.

Uniform matroid base polytopes, equivalently hypersimplices, are Ehrhart positive by Ferroni \cite{Ferroni2021Hypersimplices}.  All rank-two matroid base polytopes are Ehrhart positive by Ferroni, Jochemko, and Schr\"oter \cite{FerroniJochemkoSchroeter2022}.  Hence these polytopes, and direct sums of such polytopes, satisfy
\[\rho(P(M))=\tau(P(M))=\tau^+(P(M))=0.
\]

A generalized permutohedron may equivalently be defined as a polytope all of whose edge directions are parallel to vectors $e_i-e_j$. Matroid base polytopes are integral generalized permutohedra.  Postnikov's lattice-point formula \cite[Theorem~11.3]{Postnikov2009} proves Ehrhart positivity for $\mathcal Y$-generalized permutohedra, namely integral
Minkowski sums
\[\sum_{\emptyset \neq I\subseteq[n]}y_I\Delta_I\qquad\text{with }y_I\in\Z_{\geq0},
\]
where $\Delta_I=\conv\{e_i:i\in I\}$ is a standard coordinate simplex.
They therefore satisfy $\rho=\tau=\tau^+=0$.  The same is true for every integral generalized
permutohedron of dimension at most six by the Berline--Vergne positivity result of Castillo and Liu \cite{CastilloLiu2018}.

The general class is not Ehrhart positive.  Ferroni constructed connected matroids with negative Ehrhart coefficients \cite{Ferroni2022Matroids}. We show that the first explicit counterexample in that paper has least
integral centers equal to one.

For integers $1\leq k\leq n-1$, set
\begin{align*}
 U_{k,n}(t)&=\sum_{a=0}^{k-1}(-1)^a\binom{n}{a}\binom{(k-a)t+n-1-a}{n-1},\\
 D_{k,n}(t)&=\frac{\binom{t+n-k}{n-k}}{\binom{n-1}{k-1}}\sum_{a=0}^{k-1}\binom{n-k-1+a}{a}\binom{t+a}{a}.
\end{align*}
Here every binomial coefficient is interpreted as a polynomial in $t$.
The first formula, originally due to Katzman \cite[Corollary~2.2]{Katzman2005}, is the Ehrhart polynomial of the
hypersimplex $P(U_{k,n})$. The second is the Ehrhart polynomial of the minimal connected matroid $T_{k,n}$; see
\cite{Ferroni2022Minimal,Ferroni2022Matroids}.

\begin{example}\label{thm:natural-matroid-center}
Ferroni \cite[Theorem~5.3]{Ferroni2022Matroids} constructs a sparse paving matroid $M$ of rank $9$ on $20$ elements with $8398$
circuit-hyperplanes.  It is connected, and its Ehrhart polynomial is
\begin{equation*}
 L_{P(M)}(t)=U_{9,20}(t)-8398D_{9,20}(t-1).
\end{equation*}
Ferroni proved that $P(M)$ is non-Ehrhart positive; in fact, its quadratic and cubic Ehrhart coefficients are negative, so it is not Taylor nonnegative at zero.
A tedious computation shows that $\rho(P(M))\in(0,1)$, $\tau(P(M))=\tau^+(P(M))=1$.
\end{example}

\begin{remark}
There is also a noteworthy positivity statement at a negative center.
If $M$ has rank $k$, has $n$ elements, and has no loops or coloops, then
Chavez, Dorpalen-Barry, Ferroni, Liu, Rinc\'on, and Vindas-Mel\'endez
proved \cite[Theorem~1.1]{ChavezEtAl2026} that
\[\A_1(P(M);-1)=L_{P(M)}'(-1)=\frac{\beta(M)}{(n-1)\binom{n-2}{k-1}},
\]
where $\beta(M)$ is Crapo's beta invariant \cite{Crapo1967}. This result does not by
itself determine the least nonnegative centers, but it shows that the
shifted coefficient functions of matroid polytopes already encode
classical matroid invariants outside the interval used in
\cref{def:positivity-indices}.
\end{remark}

\subsection{Smooth polytopes}

A $d$-dimensional lattice polytope is smooth if, at every vertex, the
primitive directions of the $d$ incident edges form a lattice basis.
Smoothness is preserved by integral dilations.  It is also preserved by
Cartesian products: at a vertex $(v,w)$ of $P\times Q$, the primitive
edge directions are
\[
 (u_1,0),\ldots,(u_p,0),(0,z_1),\ldots,(0,z_q),
\]
where the $u_i$ and $z_j$ are lattice bases at $v$ and $w$; their union
is a lattice basis of the product lattice.

Castillo, Liu, Nill, and Paffenholz constructed smooth polytopes with all potentially negative Ehrhart coefficients simultaneously negative \cite{CastilloEtAl2018}.  Their three-dimensional building blocks are
\[
 B_k=\operatorname{chisel}\!
 \left(3^k[0,1]^3;(3^{k-1},3^{k-2},\ldots,3,1)\right).
\]
For a detailed definition of $\operatorname{chisel}$, see \cite{CastilloEtAl2018}.
The Ehrhart polynomial of $B_k$ is 
\begin{equation*}
 L_{B_k}(t)=q_3t^3+q_2t^2-q_1t+1,
\end{equation*}
where
\begin{align*}
 q_1=3^{k-2}(8k-27),\quad
 q_2=3^{k-1}(7\cdot3^k+2),\quad
 q_3=\frac12\,3^{k-2}(17\cdot3^{2k}+1).
\end{align*}
For $k\geq4$, all three $q_i$ are positive.

\begin{theorem}\label{thm:natural-smooth-center-one}
In every dimension $d\geq3$, there exists a smooth $d$-dimensional lattice polytope $P_d$ such that every Ehrhart coefficient not forced to be positive is negative, but $\tau(P_d)=\tau^+(P_d)=1$.
\end{theorem}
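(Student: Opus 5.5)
The plan is to take for $P_d$ the smooth polytopes of Castillo, Liu, Nill, and Paffenholz \cite{CastilloEtAl2018}, which are built from the blocks $B_k$. The first step is to show that each building block is Taylor positive at $1$. The second is to pass this to the product construction using \cref{prop:natural-products}. The third is to use the negativity of a middle Ehrhart coefficient to rule out the center $0$. Smoothness needs no new argument, since it is preserved by products and integral dilations, as recorded above.

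\emph{Step 1: the block $B_k$.} For $k\geq4$, write $L_{B_k}(t)=q_3t^3+q_2t^2-q_1t+1$ with $q_1,q_2,q_3>0$. Expanding at $1$ gives
\[
L_{B_k}(1+x)=q_3x^3+(3q_3+q_2)x^2+(3q_3+2q_2-q_1)x+L_{B_k}(1).
\]
The constant term $L_{B_k}(1)$ is a lattice-point count, hence positive. The top two coefficients are visibly positive. For the linear coefficient, $q_3$ is of order $3^{3k}$ while $q_1$ is of order $k3^k$, so $3q_3>q_1$ and the coefficient is positive. In fact this case is already covered by \cref{thm:dimension-four}, since $\dim B_k=3$. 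Thus $B_k$ is Taylor positive at $1$, and by \cref{cor:positivity-persistence} it is Taylor positive at every $k\geq1$.

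\emph{Step 2: the polytope $P_d$.} For $d=3$, I take $P_3=B_k$ with $k\geq4$. Its only coefficient not forced to be positive is $c_1=-q_1<0$. For $d\geq4$, I take the smooth $d$-dimensional polytope from \cite{CastilloEtAl2018}. As I recall the construction, it is a Cartesian product of copies (or dilates) of blocks $B_k$, with $k$ large, together with low-dimensional smooth factors (unit segments or squares) to reach the dimension $d$. Their main theorem gives $c_1,\ldots,c_{d-2}<0$, which I will cite rather than reprove; that verification is the genuinely delicate part, and I will not redo it. For positivity at $1$, every factor has dimension at most $3$, so \cref{thm:dimension-four} (or Step~1) makes each factor Taylor positive at $1$. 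The convolution formula \eqref{eq:natural-product-convolution} then gives $\A_j(P_d;1)>0$ for all $j$, so $\tau^+(P_d)\leq1$.

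\emph{Step 3: the lower bound.} Since $c_1=\A_1(P_d;0)<0$, the polytope $P_d$ is not Taylor nonnegative at $0$, so $\tau(P_d)\geq1$. Together with $\tau\leq\tau^+\leq1$ this gives $\tau(P_d)=\tau^+(P_d)=1$.

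\emph{Main obstacle.} I expect the hardest point to be matching the exact form of the Castillo--Liu--Nill--Paffenholz construction. If their factors include something other than blocks of dimension at most $3$, for instance a dilation of a product, I would handle it as follows. For a dilate, \cref{prop:natural-dilation} gives $\A_j(qP;1)=q^j\A_j(P;q)$, which is positive because $q\geq1$ and $P$ is Taylor positive at every center $\geq1$ by \cref{cor:positivity-persistence}. For a product, \cref{prop:natural-products} applies directly. So positivity at $1$ propagates through every operation in their construction.
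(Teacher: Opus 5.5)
Your proposal is correct and follows essentially the same route as the paper: Taylor positivity of $B_k$ at $1$, then propagation through the product $P^n(k,a)=B_k\times a[0,1]^n$ via \cref{prop:natural-products}, then the negativity of $c_1,\ldots,c_{d-2}$ cited from Castillo--Liu--Nill--Paffenholz to exclude the center $0$ (the cube factor has Ehrhart polynomial $(at+1)^n$, which is $(ax+a+1)^n$ at center $1$, so your contingency for products and dilates covers the actual construction). Your remark that Step~1 already follows from \cref{thm:dimension-four} is a valid shortcut, replacing the paper's explicit check of the linear coefficient $3q_3+2q_2-q_1$.
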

\begin{proof}
First consider $B_k$ with $k\geq4$. We have 
\[
 L_{B_k}(1+x)
 =L_{B_k}(1)+(3q_3+2q_2-q_1)x
 +(3q_3+q_2)x^2+q_3x^3.
\]
The constant term is the positive integer $L_{B_k}(1)$.  The quadratic
and cubic coefficients are plainly positive.  For the linear coefficient,
\begin{align*}
3q_3+2q_2-q_1=3^{k-2}\left(\frac{51}{2}3^{2k}+42\cdot3^k+\frac{81}{2}-8k\right)>0.
\end{align*}
Thus $B_k$ is Taylor positive at $1$.
For $n\geq1$, Castillo, Liu, Nill, and Paffenholz take products $P^{n}(k,a)=B_k\times a[0,1]^n$.
The second factor has Ehrhart polynomial $(at+1)^n$, and at $t=1+x$ it becomes $(ax+a+1)^n$, whose coefficients are all positive.
Hence \cref{prop:natural-products} shows that $P^{n}(k,a)$ is Taylor positive at $1$.

For each $n$, the parameters $k$ and $a$ in \cite[Theorem~1.2]{CastilloEtAl2018} are chosen so that every coefficient
of degrees $1,\ldots,n+1$ in the ordinary Ehrhart polynomial is negative. Therefore these polytopes are not Taylor nonnegative at zero.  Their dimension is $n+3$, and $\tau(P^n(k,a))=\tau^+(P^n(k,a))=1$.  In dimension three one may take $B_4$, whose linear Ehrhart coefficient is negative.
\end{proof}

\begin{example}\label{thm:natural-smooth-real-center}
The smooth three-dimensional polytope $B_4$ has
\[L_{B_4}(t)=501921t^3+15363t^2-45t+1.
\]
Through a tedious computation, we obtain
\begin{equation*}
 \rho(B_4)=\frac{72\sqrt{234399}-30726}{3011526}=0.001372272642144697\ldots.
\end{equation*}
In particular, $\tau(B_4)=\tau^+(B_4)=1$.
By \cref{prop:natural-rounding,prop:natural-dilation}, for every $q\in\Z_{>0}$, we have 
\[\rho(qB_4)=\frac{\rho(B_4)}q,\qquad\tau(qB_4)=\tau^+(qB_4)=1.
\]
Integral dilation does not change the primitive edge directions, so every $qB_4$ remains smooth.
\end{example}

\begin{corollary}\label{cor:natural-no-real-gap}
Within both the class of smooth lattice polytopes and the class of integral generalized permutohedra, there are non-Ehrhart-positive polytopes for which examples with $\rho(P)>0$ and $\rho(P)\rightarrow 0$, while $\tau(P)=\tau^+(P)=1$.
\end{corollary}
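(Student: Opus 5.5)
The corollary follows by combining the two explicit families already analyzed: the dilations $qB_4$ for the smooth class, and integral dilations of Ferroni's matroid $M$ from \cref{thm:natural-matroid-center} for integral generalized permutohedra.

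\textbf{Smooth case.} Take the family $qB_4$, $q\in\Z_{>0}$. Integral dilation preserves smoothness. By \cref{prop:natural-dilation}, $\A_j(qB_4;0)=q^j\A_j(B_4;0)$, so the signs of the ordinary Ehrhart coefficients are unchanged. Since $B_4$ has linear coefficient $-45<0$, each $qB_4$ is non-Ehrhart positive. By \cref{thm:natural-smooth-real-center}, $\rho(qB_4)=\rho(B_4)/q$, which is positive and tends to $0$ as $q\to\infty$. Since $0<\rho(qB_4)<1$ and $\rho\notin\Z$, \cref{prop:natural-rounding} gives $\tau(qB_4)=\tau^+(qB_4)=\lceil\rho\rceil=1$.

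\textbf{Generalized permutohedra case.} Take $qP(M)$. Dilating by $q$ preserves all edge directions, so $qP(M)$ is again an integral generalized permutohedron. The dilation identity \eqref{eq:natural-dilation-coefficients} keeps the quadratic and cubic ordinary coefficients negative, so $qP(M)$ is non-Ehrhart positive. From \cref{thm:natural-matroid-center} we have $\rho(P(M))\in(0,1)$, hence $\rho(qP(M))=\rho(P(M))/q\in(0,1)$ and tends to $0$. Again \cref{prop:natural-rounding} gives $\tau=\tau^+=1$, because $\rho$ is a non-integer lying in $(0,1)$.

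The only point needing care is that the dilation formula of \cref{prop:natural-dilation} applies verbatim to the lower-dimensional matroid polytope. This holds under the paper's stated convention that Ehrhart data are taken relative to the affine lattice of the affine span: $L_{qP}(t)=L_P(qt)$ holds in that setting as well. Everything else is a direct citation, so I expect no real obstacle.
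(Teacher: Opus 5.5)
Your proposal is correct and follows essentially the same route as the paper. You take the dilations $qB_4$ and $qP(M)$, use \cref{prop:natural-dilation} to get $\rho(qP)=\rho(P)/q\in(0,1)$ with the signs of the ordinary Ehrhart coefficients unchanged, and then read off $\tau=\tau^+=1$ from \cref{prop:natural-rounding}. Your extra checks only make explicit what the paper leaves implicit: that dilation preserves edge directions, and that $L_{qP}(t)=L_P(qt)$ also holds for the lower-dimensional matroid polytope.
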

\begin{proof}
For smooth polytopes, take the sequence $qB_4$ ($q\geq 1$) and use \cref{thm:natural-smooth-real-center}.  For generalized
permutohedra, take integral dilations of the matroid polytope in \cref{thm:natural-matroid-center}.  Dilations of a generalized
permutohedron are generalized permutohedra.  By \cref{prop:natural-dilation}, their least real centers are $\rho(P(M))/q$,
whereas their ordinary Ehrhart coefficient signs are unchanged and their least integral centers remain one.
\end{proof}

The examples exhibit several different behaviors.  Disjoint unions of chains, hypersimplices, rank-two matroid polytopes, and
$\mathcal Y$-generalized permutohedra satisfy $\rho=\tau=\tau^+=0$.  On the other hand, order polytopes, matroid base polytopes, generalized permutohedra, and smooth polytopes all contain examples with $\tau=\tau^+=1$. 
According to \cref{cor:natural-no-real-gap}, the invariant $\rho$ can be arbitrarily close to zero while both integer indices remian equal to one.

\section{Concluding remarks}\label{sec:conclusion}

We introduced the Taylor coefficients $A_j(P; k)$ of lattice polytopes, derived explicit formulas for them, determined sharp universal bounds for their least nonnegative centers, and established several structural properties. In particular, these results provide a systematic partial answer to \cref{Problem-Join-open}. 

Several classification problems remain open.  It would be useful to characterize natural classes for which $\rho(P)=0$, especially within the families of matroid base polytopes and smooth polytopes.  One may also ask for a structural description of the polytopes attaining $\tau(P)=\lfloor(d-1)/2\rfloor$, beyond the generalized Reeve constructions and their pyramids.






\noindent
{\small \textbf{Acknowledgments:}}
The authors would like to express sincere gratitude for all the suggestions that have improved the presentation of this paper.
Feihu Liu was partially supported by the Postdoctoral Fellowship Program and China Postdoctoral Science Foundation (Grant No. BX2026002).

%

\end{document}